\newtheorem{theorem}{Theorem}[section]
\newtheorem{definition}{Definition}[section]
\newtheorem{lemma}{Lemma}[section]
\newtheorem{proposition}[theorem]{Proposition}
\theoremstyle{definition}
\newtheorem{remark}{Remark}[section]
\date{\today}
\title[Reduction principle]{Large scale reduction principle
and application to hypothesis testing}
\newcommand{\rmd}{\mathrm{d}}
\newcommand{\rme}{\mathrm{e}}
\newcommand{\rmi}{\mathrm{i}}
\newcommand{\1}{\mathbbm{1}}
\newcommand{\ellset}{\mathcal{L}}
\newcommand{\Irset}{\mathcal{R}}
\newcommand{\K}{\mathbf{L}}
\newcommand{\bh}{\mathbf{h}}
\newcommand{\bW}{\mathbf{W}}
\newcommand{\bx}{\mathbf{x}}
\newcommand{\bS}{\mathbf{S}}
\newcommand{\bkappa}{\boldsymbol{\kappa}}
\author{M. Clausel}
\address{Laboratoire Jean Kuntzmann\\
Université de Grenoble, CNRS\\
F38041 Grenoble Cedex 9}
\email{marianne.clausel@imag.fr}
\author{F. Roueff}
\address{Institut Mines--Telecom, Telecom ParisTech, CNRS LTCI, 46 rue Barrault\\
  75634 Paris Cedex 13, France}
\email{roueff@telecom-paristech.fr}
\author{M.~S. Taqqu}
\address{Departement of Mathematics and Statistics, Boston University,
  Boston, MA 02215, USA}
\email{murad@math.bu.edu}
\thanks{MSC 2010 subject classifications~:~42C40, 60G18, 62M15,60G20,60G22}
\thanks{Keywords~:~long-range
  dependence; long memory; self-similarity; wavelet transform; estimation;
  hypothesis testing.}
\begin{document}
\maketitle
\begin{abstract}
  Consider a non--linear function $G(X_t)$ where $X_t$ is a stationary Gaussian
  sequence with long--range dependence. The usual reduction principle states
  that the partial sums of $G(X_t)$ behave asymptotically like the partial sums
  of the first term in the expansion of $G$ in Hermite polynomials. In the
  context of the wavelet estimation of the long--range dependence parameter, one
  replaces the partial sums of $G(X_t)$ by the wavelet scalogram, namely the
  partial sum of squares of the wavelet coefficients. Is there a reduction
  principle in the wavelet setting, namely is the asymptotic behavior of the
  scalogram for $G(X_t)$ the same as that for the first term in the expansion
  of $G$ in Hermite polynomial? The answer is negative in general. This paper
  provides a minimal growth condition on the scales of the wavelet coefficients
  which ensures that the reduction principle also holds for the scalogram. The
  results are applied to testing the hypothesis that the long-range
  dependence parameter takes a specific value.
\end{abstract}
\tableofcontents
\section{Introduction}\label{s:intro}
Let  $X=\{X_{t}\}_{t\in\mathbb{Z}}$ be a centered stationary
Gaussian process with unit variance and spectral density
$f(\lambda), \lambda \in (-\pi , \pi)$. Such a stochastic process
is said to have {\it
  short memory} or {\it short--range dependence} if $f(\lambda)$ is bounded
around $\lambda=0$ and {\it long memory} or {\it long--range
dependence} if $f(\lambda)\to\infty$ as $\lambda\to0$. We
will suppose that $\{X_{t}\}_{t\in\mathbb{Z}}$ has long memory
with memory parameter $0<d<1/2$, that is,
\begin{equation}\label{e:sdf0}
f(\lambda)\sim |\lambda|^{-2d}f^*(\lambda)\mbox{ as }\lambda \to 0
\end{equation}
where the \emph{short range part} $f^*$ of the spectral density is a bounded
spectral density which is continuous and positive at the origin. The parameter
$d$ is also called the long-range dependence parameter.

A standard
assumption in the semi-parametric setup is
\begin{equation}\label{e:beta-def}
|f^*(\lambda)-f^*(0)|\leq C f^*(0)\,|\lambda|^\beta\,\quad \lambda\in(-\pi,\pi)\;,
\end{equation}
where $\beta$ is some smoothness exponent in $(0,2]$.
This hypothesis is
semi--parametric in nature because the function $f^*$ plays the
role of a ``nuisance function''. It is convenient to set
\begin{equation}\label{e:sdf}
f(\lambda)=|1-\rme^{-\rmi\lambda}|^{-2d}f^*(\lambda),\quad\lambda\in
(-\pi,\pi]\;.
\end{equation}
%The positiveness of $f^*$ at origin implies in particular that $f^*>C>0$ on
%some interval of the form$(-\eta_f,\eta_f)$. This assumption is equivalent to
%the existence of $C>0$ such that,
%\begin{equation}\label{e:lowerBsdf}
%f(\lambda)>C|\lambda|^{-2d}\mbox{ on }(-\eta_f,\eta_f)\;.
%\end{equation}
%Since the process $X$ is defined only if $\int_{-\pi }^{\pi}
%f(\lambda)d\lambda <\infty$, we need to require $d<\frac{1}{2}$.

Consider now a process $\{Y_{t}\}_{t\in\mathbb{Z}}$, such that
\begin{equation}\label{e:DefY}
\left(\Delta^K Y\right)_{t}=G(X_{t}),\quad t\in\mathbb{Z}\;,
\end{equation}
for $K\geq 0$, where $(\Delta Y)_{t}=Y_{t}-Y_{t-1}$, $\{X_t\}_{t\in\mathbb{Z}}$
is Gaussian with spectral density $f$ satisfying~(\ref{e:sdf}) and where $G$ is
a function such that $\mathbb{E}[G(X_{t})]=0$ and
$\mathbb{E}[G(X_{t})^2]<\infty$.  While the process $\{Y_t\}_{t\in\mathbb{Z}}$
is not necessarily stationary, its $K$--th difference $\Delta^K Y_t$ is
stationary. Nevertheless, as in \cite{yaglom:1958} one can speak of the ``generalized spectral density'' of $\{Y_t\}_{t\in\mathbb{Z}}$, which we denote $f_{G,K}$. It is defined as
\begin{equation}\label{e:proof:sd1}
f_{G,K}(\lambda)=|1-e^{-i\lambda}|^{-2K}\,f_G(\lambda)\;,
\end{equation}
where $f_G$ is the spectral density of $\{G(X_t)\}_{t\in\mathbb{Z}}$.

Note that $G(X_t)$ is the output of a non--linear filter $G$ with Gaussian
input. According to the Hermite expansion of $G$ and the value $d$, the time
series $Y$ may be long--range dependent (see
\cite{clausel-roueff-taqqu-tudor-2011a} for more details). We aim at developing
efficient estimators of the memory parameter of such non--linear time series.

Since the 80's many methods for the estimation of the memory parameter have
been developed. Let us cite the Fourier methods developed by Fox and
Taqqu~(\cite{fox:taqqu:1986}) and
Robinson~(\cite{robinson:1995:GPH,robinson:1995:GSE}). Since the 90's, wavelet
methods have become very popular. The idea of using wavelets to estimate the
memory parameter of a time series goes back to~\cite{wornell:oppenheim:1992}
and~\cite{flandrin:1989a,flandrin:1989b,flandrin:1991a,flandrin:1999}.  See
also~\cite{abry:veitch:1998,abry:veitch:flandrin:1998}, \cite{bardet:2002},
\cite{bardet:bibi:jouini:2008}, \cite{bardet:lang:moulines:soulier:2000}. As
shown in~\cite{flandrin:1992}, \cite{abry:veitch:1998}, \cite{veitch:abry:1999}
and~\cite{bardet:2000T} in a parametric context, the memory parameter of a time
series can be estimated using the normalized limit of its {\it scalogram}~(\ref{e:defsnj}), that is
the average of squares of its wavelet coefficients computed at a given scale.
%This approach can be generalized to the case of general Gaussian processes.
It is well--known that, when considering Gaussian or linear time series, the
wavelet--based estimator of the memory parameter is consistent and
asymptotically Gaussian (see~\cite{moulines:roueff:taqqu:2007:jtsa} for a
general framework in the Gaussian case and \cite{roueff-taqqu-2009} for the
linear case). This result is particulary important for statistical purpose
since it provides confidence intervals for the wavelet--based estimator of the
memory parameter.

The application of wavelet--based methods for the estimation of the memory
parameter of non-Gaussian stochastic processes has been much less treated in
the literature. See~\cite{abry-helgason-pipiras-2011} for some empirical
studies. In \cite{bardet:tudor:2010} is considered the case of the Rosenblatt
process which is a non-Gaussian self-similar process with stationary increments
living in the second Wiener chaos, that is, it can be expressed as a double
iterated integral with respect to the Wiener process. In this case, the
wavelet--based estimator of the memory parameter is consistent but satisfies a
non--central limit theorem. More precisely, conveniently renormalized, the
scalogram which is a sum of squares of wavelet coefficients converges to a
Rosenblatt variable and thus admits a non--Gaussian limit. This result,
surprisingly, also holds for a time series of the form $H_{q_0}(X_t)$ where
$X_t$ is Gaussian with unit variance and $H_{q_0}$ denotes the $q_0$--th
Hermite polynomial with $q_0\geq 2$
(see~\cite{clausel-roueff-taqqu-tudor-2011b}).

The general case $G(X_t)$ is expected to derive from the case
$G=H_{q_0}$. Namely, one could expect that some ``reduction theorem'' analog to
the one of~\cite{taqqu:1975} holds. Recall that the classical reduction theorem
of~\cite{taqqu:1975} states that if $G(X)$ is long--range dependent then the
limit in the sense of finite--dimensional distributions of
$\sum_{k=1}^{[nt]}G(X_k)$ adequately normalized, depends only on the first term
$c_{q_0}H_{q_0}/q_0!$ in the Hermite expansion of $G$. The reduction principle
then states that there exist normalization factors $a_n\to\infty$ as
$n\to\infty$ such that
\[
\frac{1}{a_n}\sum_{k=1}^{[nt]}G(X_k)\qquad\mbox{ and
}\qquad\frac{1}{a_n}\sum_{k=1}^{[nt]}\frac{c_{q_0}}{q_0!}H_{q_0}(X_k)\;,
\]
have the same non--degenerate limit as $n\to\infty$. A reduction principle was established
in~\cite{clausel-roueff-taqqu-tudor-2011a}, Theorem~5.1 for the {\it wavelet coefficients}
of a non--linear time series of the form $G(X_t)$. In applications, the wavelet coefficients are not used directly but only through the scalogram. For example, \cite{fay:moulines:roueff:taqqu:2008survey} use the scalogram to compare Fourier and wavelet estimation methods of the memory parameter. The difficulty is that the scalogram is a quadratic function of the wavelet coefficients involving not only the number of observations but also the scale at which the wavelet coefficients are computed. In practice, however, the scalogram is easy to obtain and one can take advantage of the structure of sample moments to investigate statistical properties. Its use is well--illustrated numerically in~\cite{abry-helgason-pipiras-2011} who consider a number of statistical applications.

The following is a natural question~:

\begin{center} Does a reduction principle hold for the scalogram? \end{center}

In~\cite{clausel-roueff-taqqu-tudor-2013} we illustrated through
different large classes of examples, that the reduction principle for the
scalogram does not necessarily hold and that the asymptotic limit of the
scalogram may even be Hermite process of order greater than $2$. %In view of
%statistical applications, this result creates difficulties, since one cannot
%easily use it to deduce confidence intervals for the estimator of the memory parameter.
It is then important to find sufficient conditions for the reduction principle
to hold. In this case, the normalized limit of the scalogram of the time series
$G(X_t)$ would be the same as the time series $c_{q_0}H_{q_0}(X)/q_0!$ studied
in~\cite{clausel-roueff-taqqu-tudor-2011b} and therefore will be asymptotically
Gaussian if $q_0=1$ and a Rosenblatt random variable if $q_0\geq2$. In
Theorem~\ref{th:main:paper4}, we prove that the reduction
principle holds {\it at large scales}, namely if
\begin{equation}
  \label{eq:rp-cond-intro}
  n_j\ll \gamma_j^{\nu_c}\mbox{ as }j\to\infty\;,
\end{equation}
that is, if the number of wavelet coefficients $n_j$ at scale $j$ (typically
$N2^{-j}$, where $N$ is the sample size) does not grow as fast as the scale
factor $\gamma_j$ (typically $2^j$) to the power $\nu_c$,
as the sample size $N$ and the scale index $j$ go to infinity. The critical exponent $\nu_c$
depends on the function $G$ under consideration and may take the value
$\nu_c=\infty$ for some functions, in which case the reduction principle holds
without any particular growth condition on $\gamma_j$ and $n_j$ besides
$n_j\to\infty$ and $\gamma_j\to\infty$ as $j\to\infty$.

The paper is organized as follows. In Section~\ref{s:LRD}, we introduce
long--range dependence and the scalogram. The main
Theorem~\ref{th:main:paper4}, which states that under
Condition~(\ref{eq:rp-cond-intro}) the reduction principle holds is stated in
Section~\ref{s:main} with the critical exponent $\nu_c$ given in
Section~\ref{sec:critical-exponent} and examples provided in
Section~\ref{sec:examples}. Section~\ref{sec:appl-wavel-estim} contains
statistical applications.  The decomposition of the scalogram in Wiener chaos
is described in Section~\ref{s:decomposition}. That section contains
Theorem~\ref{th:main2:paper4} on which Theorem~\ref{th:main:paper4} is based.
Several proofs are in Section~\ref{s:proofs}. Section~\ref{s:lemmas} contains technical
lemmas. The integral representations are described in
Appendix~\ref{s:appendixA} and the wavelet filters are given in
Appendix~\ref{s:appendixB}. Appendix~\ref{s:appendixC} depicts the multiscale wavelet inference setting.

For the convenience of the reader, in addition to providing a formal proof of a given result, we sometimes describe in a few lines the idea behind the proof.

\section{Long--range dependence and the multidimensional wavelet scalogram}\label{s:LRD}
The centered Gaussian sequence $X=\{X_t\}_{t\in\mathbb{Z}}$ with unit variance and spectral
density~(\ref{e:sdf}) is long--range dependent because $d>0$ and
hence its spectrum explodes at $\lambda=0$.

The long--memory behavior of a time series $Y$ of the form~(\ref{e:DefY}) is
well--known to depend on the expansion of $G$ in Hermite series. Recall that if
$\mathbb{E}[G(X_0)]=0$ and $\mathbb{E}[G(X_0)^2]<\infty$ for
$X_0\sim\mathcal{N}(0,1)$, $G(X)$ can be expanded in Hermite polynomials, that
is,
\begin{equation}\label{e:chaos-exp}
G(X)=\sum_{q=1}^{\infty}\frac{c_q}{q!} H_q(X)\;.
\end{equation}
One sometimes refer to~(\ref{e:chaos-exp}) as an expansion in Wiener chaos.
The convergence of the infinite sum~(\ref{e:chaos-exp}) is in
$L^2(\Omega)$,
\begin{equation}\label{e:cq}
c_q=\mathbb{E}[G(X)H_q(X)]\;,\quad q\geq 1\;,
\end{equation}
and
\begin{equation*}
H_{q}(x)= (-1)^{q}e^{\frac{x^{2}}{2}}\frac{d^{q}}{dx^{q}}\left(
e^{-\frac{x^{2}}{2}}\right)\;,
\end{equation*}
are the Hermite polynomials. These  Hermite polynomials satisfy
$H_0(x)=1,H_1(x)=x,H_2(x)=x^2-1$ and one has
\[
\mathbb{E}[H_q(X)H_{q'}(X)]=
\int_\mathbb{R}H_q(x)H_{q'}(x)\frac{1}{\sqrt{2\pi}}\rme^{-x^2/2}\rmd
x=q!\1_{\{q=q'\}}\;.
\]
Observe that the expansion~(\ref{e:chaos-exp}) starts at $q=1$,
since
\begin{equation}\label{e:c0}
c_0=\mathbb{E}[G(X)H_0(X)]=\mathbb{E}[G(X)]=0\;,
\end{equation}
by assumption. Denote by $q_0\geq 1$ the {\it Hermite rank} of
$G$, namely the index of the first non--zero coefficient in the
expansion~(\ref{e:chaos-exp}). Formally,
\begin{equation}
  \label{e:hermiterank}
q_0=\min\{q\geq1,\,c_q\neq 0\}\;.
\end{equation}
One has then
\begin{equation}\label{e:summability}
\sum_{q=q_0}^{+\infty}\frac{c_q^2}{q!}=\mathbb{E}[G(X)^2]<\infty\;.
\end{equation}

In the special case where $G=H_q$, whether $\{H_q(X_t)\}_{t\in\mathbb{Z}}$ is also long--range dependent
depends on the respective values of $q$ and $d$.  We show
in~\cite{clausel-roueff-taqqu-tudor-2011a}, that the spectral density of
$\{H_q(X_t)\}_{t\in\mathbb{Z}}$ behaves like
$|\lambda|^{-2\delta_+(q)}$ as $\lambda\to 0$, where
\begin{equation}\label{e:ldparamq}
\delta_+(q)=\max(\delta(q),0)\quad\text{where}\quad
\delta(q)=qd-(q-1)/2\;.
\end{equation}
We will also let $\delta_+(0)=\delta(0)=1/2$. For $q\geq 1$, $\delta_+(q)$ is the memory parameter of
$\{H_q(X_t)\}_{t\in\mathbb{Z}}$. It is a non--increasing function of $q$. Therefore, since $0<d<1/2$,
$\{H_q(X_t)\}_{t\in\mathbb{Z}}$, $q\geq 1$, is
long--range dependent\footnote{In our context, the values
$d=1/2-1/(2q)$, $q\geq1$, constitute boundary values which introduce
logarithmic terms and will be omitted for simplicity. See Remark~\ref{rem:log}.} if and only if
\begin{equation}\label{e:dq>0}
\delta(q)>0\Longleftrightarrow d>\frac{1}{2}(1-1/q)\;,
\end{equation}
that is, $d$ must be sufficiently close to $1/2$. Specifically,
for long--range dependence,
\begin{equation}
  \label{eq:qd}
q=1\Rightarrow d>0,\quad q=2\Rightarrow d>1/4,\quad q=3\Rightarrow
d>1/3,\quad q=4\Rightarrow d>3/8\;.
\end{equation}
From another perspective,
\begin{equation}\label{e:dqq>0}
\delta(q)>0\Longleftrightarrow 1\leq q<1/(1-2d)\;,
\end{equation}
and thus $\{H_q(X_t)\}_{t\in\mathbb{Z}}$ is short--range dependent
if $q\geq 1/(1-2d)$.

Recall that the Hermite rank of $G$
is $q_0\geq1$, that is the expansion of $G(X_{t})$ starts at $q_0$. We
always assume that $\{H_{q_0}(X_t)\}_{t\in\mathbb{Z}}$ has long
memory, that is,
\begin{equation}\label{e:longmemorycondition}
q_0 < 1/(1-2d) \;.
\end{equation}
The condition~(\ref{e:longmemorycondition}), with $q_0$ defined as the Hermite
rank~(\ref{e:hermiterank}), ensures such that
$\{Y_t\}_{t\in\mathbb{Z}}=\{\Delta^{-K}G(X_t)\}_{t\in\mathbb{Z}}$ is long-range
dependent with long
memory parameter
\begin{equation}
  \label{eq:d0-def}
d_0=K+\delta(q_0)  \in (K,K+1/2) \; .
\end{equation}
More precisely, we have the following result which also determines a Hölder
condition on the short-range part of the spectral density. This condition shall
involve $q_0$ defined in~(\ref{eq:d0-def}), and, if $G$ is not reduced to $c_{q_0}H_{q_0}/(q_0!)$,
it also involves the index of the second non-vanishing Hermite coefficient denoted by
$$
q_1=\inf\{q>q_0~:~c_q\neq0\}\;.
$$
If there is no such $q_1$ we let $\delta_+(q_1)=0$ in~(\ref{eq:zeta-cond}).
\begin{theorem}\label{thm:spec-density}
Let $Y$ be defined as above. Then the generalized spectral density $f_{G,K}$ of $Y$
can be written as
$$
f_{G,K}(\lambda)=|1-\rme^{-\rmi \lambda}|^{-2 d_0}\;f_{G}^*(\lambda)\;,
$$
where $d_0$ is defined by~(\ref{eq:d0-def}) and $f_{G}^*$ is bounded,
continuous and positive at the origin. Moreover,  for any $\zeta>0$ satisfying
\begin{equation}
  \label{eq:zeta-cond}
  \zeta\leq \min(\beta,2(\delta(q_0)-\delta_+(q_1))\quad\text{and, if
    $q_0\geq2$,}\quad \zeta<2\delta(q_0) \;,
\end{equation}
there exists a constant $C>0$ such that
\begin{equation}\label{e:betatilde-def}
|f_{G}^*(\lambda)-f_{G}^*(0)|\leq C f_{G}^*(0)\,|\lambda|^{\zeta},\,\quad \lambda\in(-\pi,\pi)\;.
\end{equation}
\end{theorem}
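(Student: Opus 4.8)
The plan is to reduce the whole statement to an analysis of the self-convolutions of $f$ near the origin. Recalling that $f_{G,K}(\lambda)=|1-\rme^{-\rmi\lambda}|^{-2K}f_G(\lambda)$ and setting $d_0=K+\delta(q_0)$, the asserted factorization is equivalent to \emph{defining}
\[
f_G^*(\lambda)=|1-\rme^{-\rmi\lambda}|^{2\delta(q_0)}\,f_G(\lambda)\;,
\]
so that the entire theorem becomes a statement about the behavior of $f_G^*$ at $\lambda=0$: boundedness on $(-\pi,\pi)$, continuity and positivity at the origin, and the Hölder bound~\eqref{e:betatilde-def}. Everything therefore rests on understanding $f_G$ near $0$, which I would approach through the Hermite expansion of $G$.

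First I would pass from $G$ to its spectral representation. Writing $r(h)=\mathbb{E}[X_0X_h]$ for the covariance of $X$ and using the orthogonality relation $\mathbb{E}[H_q(X_0)H_{q'}(X_h)]=q!\,r(h)^q\1_{\{q=q'\}}$, the covariance of $\{G(X_t)\}$ equals $\sum_{q\geq q_0}(c_q^2/q!)\,r(h)^q$. Since raising a covariance to the power $q$ corresponds on the spectral side to the $q$-fold self-convolution of $f$, denoted $f^{*q}$, this yields
\[
f_G(\lambda)=\sum_{q=q_0}^{\infty}\frac{c_q^2}{q!}\,f^{*q}(\lambda)\;,\qquad\text{whence}\qquad f_G^*(\lambda)=\sum_{q=q_0}^{\infty}\frac{c_q^2}{q!}\,|1-\rme^{-\rmi\lambda}|^{2\delta(q_0)}\,f^{*q}(\lambda)\;.
\]
Note that $2\delta_+(q)<1$ for every $q\geq q_0$, so each $f^{*q}$ is integrable and the decomposition is meaningful.

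The core of the argument is a precise description of each $f^{*q}$ near the origin. Building on the fact already recalled in the excerpt that $f^{*q}(\lambda)$ behaves like $|\lambda|^{-2\delta_+(q)}$ as $\lambda\to0$, I would sharpen this to an expansion $f^{*q}(\lambda)=\ell_q\,|1-\rme^{-\rmi\lambda}|^{-2\delta_+(q)}\bigl(1+R_q(\lambda)\bigr)$, with $\ell_q>0$ whenever $\delta(q)>0$, and with a remainder $R_q$ vanishing at $0$ and satisfying a Hölder bound whose exponent is controlled by $\beta$ and by $2\delta(q)$. For $q=q_0=1$ this is exact and trivial, since $f^{*1}=f$ and $|1-\rme^{-\rmi\lambda}|^{2\delta(1)}f(\lambda)=f^*(\lambda)$, which already carries the exponent $\beta$; for $q_0\geq2$ the convolution injects an additional error of relative order $|\lambda|^{2\delta(q_0)}$, which is precisely the origin of the extra constraint $\zeta<2\delta(q_0)$.

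Finally I would assemble the pieces after multiplying by $|1-\rme^{-\rmi\lambda}|^{2\delta(q_0)}$. The leading term $q=q_0$ contributes the strictly positive constant $f_G^*(0)=(c_{q_0}^2/q_0!)\,\ell_{q_0}$ together with a remainder of Hölder exponent $\min(\beta,2\delta(q_0))$, while each term $q>q_0$ contributes a quantity vanishing at $0$ like $|\lambda|^{2(\delta(q_0)-\delta_+(q))}$, the slowest being $q=q_1$ and yielding the exponent $2(\delta(q_0)-\delta_+(q_1))$; these three exponents are exactly those appearing in~\eqref{eq:zeta-cond}, so taking $\zeta$ no larger than their minimum gives~\eqref{e:betatilde-def}. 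I expect the main obstacle to be the third step, namely obtaining the remainder estimates for $f^{*q}$ uniformly enough in $q$ that the series may be summed term by term: this should require a dedicated convolution lemma bounding $|f^{*q}(\lambda)-\ell_q|1-\rme^{-\rmi\lambda}|^{-2\delta_+(q)}|$ by a summable sequence in $q$ times $|\lambda|^{\zeta}$, so that $\sum_q c_q^2/q!<\infty$ controls the tail and propagates both continuity at the origin and the Hölder bound to $f_G^*$.
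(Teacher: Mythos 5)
Your skeleton is the same as the paper's: reduce everything to the behaviour of $f_G^*(\lambda)=|1-\rme^{-\rmi\lambda}|^{2\delta(q_0)}f_G(\lambda)$ at the origin, write $f_G=\sum_q (c_q^2/q!)\,f^{\star q}$ via the Hermite orthogonality relation, extract the leading $q=q_0$ term, and read off the three exponents $\beta$, $2(\delta(q_0)-\delta_+(q_1))$ and $2\delta(q_0)$ from, respectively, the nuisance function, the second non-vanishing chaos, and the error created by convolution. The per-term analysis you describe is exactly what Lemma~\ref{lem:spec-density} provides (by induction on the number of convolutions).

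The genuine gap is the step you yourself flag as ``the main obstacle'': summing the series. As written, your plan asks for a Hölder-type remainder estimate on \emph{every} $f^{\star q}$ with constants summable against $c_q^2/q!$, and this is both harder than necessary and not something you could prove uniformly in $q$ --- for $q$ just above $1/(1-2d)$ the convolution $f^{\star q}$ is bounded but its modulus of continuity at $0$ degenerates, so no single Hölder exponent $\zeta>0$ works for all terms. The paper avoids this entirely by splitting $G=G_1+G_2$ with $G_1$ collecting the finitely many indices $q<1/(1-2d)$ and $G_2$ the rest. For the finitely many long-memory terms one applies the convolution lemma term by term (no uniformity needed), and for the tail one only needs that $f_{G_2}$ is \emph{bounded}: since $f_{G_2}(0)<\infty$ does not contribute to $f_G^*(0)$ after multiplication by $|1-\rme^{-\rmi\lambda}|^{2\delta(q_0)}$, the tail automatically contributes $O(|\lambda|^{2\delta(q_0)})$ to $|f_G^*(\lambda)-f_G^*(0)|$, which is absorbed by the constraint $\zeta\leq 2\delta(q_0)$ implicit in~(\ref{eq:zeta-cond}). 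Note, moreover, that even the boundedness of $\sum_{q\geq 1/(1-2d)}(c_q^2/q!)\sup_\lambda f^{\star q}$ is not free: $\sup_\lambda f^{\star q}$ grows with $q$, and controlling it is precisely where the coefficient decay condition~(\ref{e:condcv}) of Assumption \textbf{A}~\ref{item:LMass} enters (via Proposition~6.2 of \cite{clausel-roueff-taqqu-tudor-2011a}); your proposal never invokes this assumption, yet without it the statement can fail. So: replace your uniform-in-$q$ Hölder lemma by (i) the finite/infinite split and (ii) a boundedness statement for the short-memory tail resting on~(\ref{e:condcv}), and your argument closes.
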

\begin{proof}
See Section~\ref{s:proof:spec-density}.
\end{proof}
\noindent{\it Idea behind the proof of Theorem~\ref{thm:spec-density}. }Starting with the regularity of the nuisance function $f^*$ in~(\ref{e:sdf0}), one derives that of $f^*_{H_q}$ and, more generally, that of $f^*_G$, taking advantage of the fact that the terms in the expansion of $G(X)$ in Hermite polynomials are uncorrelated.
\begin{remark}
The exponent $\zeta$ in~(\ref{eq:zeta-cond}) will affect the bias of the mean of the scalogram (see~(\ref{eq:scalogram-exp})). The higher $\zeta$, the lower the bias. Since in ~(\ref{eq:zeta-cond}), $\zeta$ is required to satisfy a non--strict and a strict inequality (if $q_0\geq 2$), we cannot
provide an explicit expression for $\zeta$. However, in most cases one has $q_0=1$ or $\delta_+(q_1)>0$ and hence one can set
$\zeta= \min(\beta,2(\delta(q_0)-\delta_+(q_1)))$ which then satisfies both
inequalities in~(\ref{eq:zeta-cond}).
%One cannot do this, however, when  $q_0\geq2$, $q_1\geq1/(1-2d)$
%and $2\delta(q_0)<\beta$, because this would imply
%$\zeta=\min(\beta,2(\delta(q_0)-\delta_+(q_1)))=2\delta(q_0)$. Hence in general $\zeta$ should
%be taken smaller than this min.
\end{remark}

Our estimator of the long memory parameter of $Y$ is defined from its wavelet coefficients, denoted by $\{W_{j,k},\,j\geq 0,\,k\in\mathbb{Z}\}$, where $j$
indicates the scale index and $k$ the location. These wavelet coefficients are
defined by
\begin{equation}\label{e:WC}
W_{j,k}=\sum_{t\in\mathbb{Z}}h_j(\gamma_j k-t)Y_{t}\;,
\end{equation}
where $\gamma_j\uparrow \infty$ as $j\uparrow \infty$ is a sequence of
non--negative decimation factors applied at scale index $j$.  The properties of
the memory parameter estimator are directly related to the asymptotic behavior
of the scalogram $S_{n_j,j}$, defined by
\begin{equation}\label{e:defsnj}
S_{n_j,j}=\frac{1}{n_j}\sum_{k=0}^{n_j-1}W_{j,k}^2\;,
\end{equation}
as $n_j\to\infty$ (large sample behavior) and
$j\to\infty$ (large scale behavior). More precisely, we will study the
asymptotic behavior of the sequence
\begin{equation}\label{e:snjm}
\overline{S}_{n_{j+u},j+u}=
S_{n_{j+u},j+u}-\mathbb{E}(S_{n_{j+u},j+u})=\frac{1}{n_{j+u}}\sum_{k=0}^{n_{j+u}-1}\left(W_{j+u,
k}^2-\mathbb{E}(W_{j+u ,k}^{2})\right)\;,
\end{equation}
adequately normalized as $j,n_j\to\infty$.

There are two perspectives. One can consider, as
in~\cite{clausel-roueff-taqqu-tudor-2011a}, that the wavelet coefficients
$W_{j+u,k}$ are processes indexed by $u$ taking a finite number of values. A
second perspective consists in replacing the filter $h_{j}$
in~(\ref{e:WC}) by a multidimensional filter $h_{\ell,j}, \ell=1,\cdots,m$ and
thus replacing $W_{j,k}$ in~(\ref{e:WC}) by
$$
W_{\ell,j,k}=\sum_{t\in\mathbb{Z}}h_{\ell,j}(\gamma_j k-t)Y_{t},\,\ell=1,\cdots,m\;,
$$
(see Appendix~\ref{s:appendixC} for more details). We adopted this second perspective in~\cite{clausel-roueff-taqqu-tudor-2011b,clausel-roueff-taqqu-tudor-2013}
and we also adopt it here since it allows us to compare our results to those
obtained in~\cite{roueff-taqqu-2009} in the Gaussian case.

We use bold faced symbols $\bW_{j,k}$ and $\bh_j$ to
emphasize the multivariate setting and let
\begin{eqnarray*}
\bh_j=\{h_{\ell,j},\,\ell=1,\cdots,m\},\qquad
\bW_{j,k}=\{W_{\ell,j,k},\,\ell=1,\cdots,m\}\;,
\end{eqnarray*}
with
\begin{equation}\label{e:Wjkbold}
\bW_{j,k}=\sum_{t\in\mathbb{Z}}\bh_j(\gamma_j
k-t)Y_t=\sum_{t\in\mathbb{Z}}\bh_j(\gamma_j
k-t)\Delta^{-K}G(X_t),\,j\geq 0,k\in\mathbb{Z}\;.
\end{equation}
We then will study the asymptotic behavior of the sequence
\begin{equation}\label{e:snjbold}
\overline{\mathbf{S}}_{n_j,j}=\frac{1}{n_j}\sum_{k=0}^{n_j-1}\left(\bW_{j,
k}^2-\mathbb{E}[\bW_{j, k}^{2}]\right)\;,
\end{equation}
adequately normalized as $j\to\infty$, where, by convention, in this paper,
\begin{equation}
  \label{eq:conv-square-vector}
\bW_{j, k}^2=
\{W_{\ell,j,k}^2,\,\ell=1,\cdots,m\}\;.
\end{equation}
The squared Euclidean norm of a vector $\bx=[x_1,\dots,x_m]^T$ will be denoted
by $|\bx|^2=x_1^2+\dots+x_m^2$ and the $L^2$ norm of a random vector
$\mathbf{X}$ is denoted by
\begin{equation}
  \label{eq:L2-norm}
\|\mathbf{X}\|_2=\left(\mathbb{E}\left[|\mathbf{X}|^2\right]\right)^{1/2} \;.
\end{equation}

We now summarize the main assumptions of this paper in the following set of conditions.

\bigskip

\noindent{\bf Assumptions A}
$\{\textbf{W}_{j,k},\,j\geq1,k\in\mathbb{Z}\}$ are the
multidimensional wavelet coefficients defined by~(\ref{e:Wjkbold})
, where
\begin{enumerate}[label=(\roman*)]
\item\label{item:X} $\{X_t\}_{t\in\mathbb{Z}}$ is a stationary Gaussian process
with mean $0$, variance $1$ and spectral density $f$
  satisfying~(\ref{e:sdf}).
\item\label{item:LMass} $G$ is a real-valued function whose Hermite
expansion~(\ref{e:chaos-exp})
  satisfies condition~(\ref{e:longmemorycondition}), namely $q_0<1/(1-2d)$, and whose coefficients in the Hermite expansion satisfy the following condition : for any $\lambda>0$
\begin{equation}\label{e:condcv}
c_q=O((q!)^{d} \mathrm{e}^{-\lambda q})\quad\mbox{ as }q\to\infty\;.
\end{equation}
\item\label{item:wave} the wavelet filters $(\bh_j)_{j\geq1}$ and their asymptotic Fourier
  transform $\widehat{\bh}_\infty$ satisfy the standard conditions~\ref{ass:w-ap}--\ref{ass:w-c} with
  $M$ vanishing moments. See details in Appendix~\ref{s:appendixB}.
\end{enumerate}

We shall prove that, provided that the number of vanishing moments of the wavelet is large
enough, these assumptions yield the following general bound for the centered
scalogram.
\begin{theorem}\label{thm:cons}
  Suppose that Assumptions \textbf{A} hold with $M\geq K+\delta(q_0)$. Then for
  any two diverging sequences $(\gamma_j)$ and $(n_j)$, we have, as $j\to\infty$,
  \begin{equation}
    \label{eq:consistency-bound}
\left\|\overline{\mathbf{S}}_{n_j,j}\right\|_2 =
O\left(\gamma_j^{2d_0}n_j^{-(1/2-d)}\right) \;.
  \end{equation}
\end{theorem}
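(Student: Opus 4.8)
The plan is to bound $\|\overline{\mathbf{S}}_{n_j,j}\|_2$ by a direct second-moment computation, since by \eqref{e:snjbold} and \eqref{eq:L2-norm},
\[
\|\overline{\mathbf{S}}_{n_j,j}\|_2^2=\sum_{\ell=1}^m\frac{1}{n_j^2}\sum_{k,k'=0}^{n_j-1}\mathrm{Cov}\!\left(W_{\ell,j,k}^2,W_{\ell,j,k'}^2\right)\;.
\]
First I would expand each wavelet coefficient along the Hermite expansion \eqref{e:chaos-exp} of $G$, writing $W_{\ell,j,k}=\sum_{q\ge q_0}\frac{c_q}{q!}W_{\ell,j,k}^{(q)}$, where $W_{\ell,j,k}^{(q)}$ is the wavelet coefficient of $\Delta^{-K}H_q(X)$ and lives in the $q$th Wiener chaos. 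Using the integral (spectral) representation of Appendix~\ref{s:appendixA}, each $W_{\ell,j,k}^{(q)}$ becomes a $q$-fold Wiener-Itô integral whose kernel is the product of the wavelet transfer function $\widehat{\mathbf{h}}$ with the spectral factors of $f$; the number of vanishing moments enters here, and the assumption $M\ge K+\delta(q_0)=d_0$ is exactly what makes these integrals converge by taming the singularity $|\lambda|^{-2d_0}$ of the generalized spectral density of Theorem~\ref{thm:spec-density}.

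Next I would evaluate $\mathrm{Cov}(W_{\ell,j,k}^2,W_{\ell,j,k'}^2)$ by the product (diagram) formula for products of Wiener-Itô integrals. Orthogonality of the chaoses collapses many terms, and what survives is a sum, over $(q,q')$ and over the admissible contractions, of products of the elementary cross-covariance kernels $\mathbb{E}[W_{\ell,j,k}^{(q)}W_{\ell,j,k'}^{(q')}]$ and their contractions. The scaling in $\gamma_j$ is read off from Theorem~\ref{thm:spec-density}: after the change of variables $\lambda\mapsto\lambda/\gamma_j$ in the spectral integrals, each coefficient satisfies $\mathbb{E}[(W_{\ell,j,k}^{(q)})^2]=O(\gamma_j^{2(K+\delta_+(q))})$, and since $\delta$ is non-increasing this is $O(\gamma_j^{2d_0})$, with the largest power attained at $q=q_0$. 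Hence every contribution to $\mathrm{Cov}(W_{\ell,j,k}^2,W_{\ell,j,k'}^2)$ carries a prefactor $O(\gamma_j^{4d_0})$.

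The $n_j$-dependence then comes from summing the lag-decay of these kernels over $k,k'$. The crucial point I expect is that, among all the contractions produced by the diagram formula, the slowest decay in $|k-k'|$ is governed by a diagram carrying a single ``free'' low-frequency Gaussian edge, whose rate is dictated by the singularity $|\lambda|^{-2d}$ of $f$ itself rather than by $d_0$; this should yield $\frac{1}{n_j^2}\sum_{k,k'=0}^{n_j-1}(\cdots)=O(n_j^{-(1-2d)})$, and explains why the exponent $1/2-d$ in \eqref{eq:consistency-bound} depends only on $d$ and not on $G$ or $K$. Combining the $\gamma_j^{4d_0}$ prefactor with this estimate gives $\|\overline{\mathbf{S}}_{n_j,j}\|_2^2=O(\gamma_j^{4d_0}n_j^{-(1-2d)})$, and taking square roots yields the claim, since $(1-2d)/2=1/2-d$.

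The main obstacle I expect is the \emph{uniform} control over the infinite Hermite expansion: the diagram formula produces combinatorial coefficients and factorial weights $q!,q'!$ growing with the chaos order, and one must show that, weighted by $c_q^2/q!$, the resulting double series over $(q,q')$ converges and is dominated by the same $\gamma_j^{4d_0}n_j^{-(1-2d)}$ scaling. This is precisely where the decay condition \eqref{e:condcv} on the Hermite coefficients is used, together with the monotonicity of $\delta$ that prevents higher chaoses from contributing a larger power of $\gamma_j$. Making the bounds on the contracted kernels uniform in $(q,q')$, and summable against \eqref{e:condcv}, is the technical heart of the argument.
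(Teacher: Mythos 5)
Your plan is essentially the paper's proof: the paper decomposes $\overline{\mathbf{S}}_{n_j,j}$ into the chaos terms $\mathbf{S}_{n_j,j}^{(q,q',p)}$ (Proposition~\ref{pro:decompSnjLD:paper3}, which is exactly your diagram-formula computation of the second moment), bounds each term via Proposition~\ref{pro:upperB}, identifies the worst exponents ($\alpha(q,q',p)\geq 1/2-d$ with equality only for the order-one terms $(q,q+1,q)$ --- your ``single free low-frequency Gaussian edge'' --- and $\beta,\beta'\leq 2\delta(q_0)$ with equality only at $q=q'=q_0$), and sums the resulting series against the decay~(\ref{e:condcv}) of the $c_q$ via Lemma~8.6 of the companion paper, which is the ``technical heart'' you correctly single out. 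The only detail your sketch omits is that, since~(\ref{eq:d-cond}) is \emph{not} assumed in this theorem, the per-term bounds~(\ref{e:UB1a})--(\ref{e:UB1b}) carry logarithmic factors $(\log n_j)^{\varepsilon(q+q'-2p)}(\log\gamma_j)^{3\varepsilon(q')}$; the paper disposes of them by observing that the equality cases force $\varepsilon(1)=\varepsilon(q_0)=0$ (the latter by~(\ref{e:longmemorycondition})), while away from equality the strictly better polynomial exponents absorb the logarithms.
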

\begin{proof}
Theorem~\ref{thm:cons} is proved in Section~\ref{s:proof:thm:cons}.
\end{proof}
\noindent{\it Idea behind the proof of Theorem~\ref{thm:cons}. }One decomposes $\overline{\mathbf{S}}_{n_j,j}$ further in terms $\mathbf{S}_{n_j,j}^{(q,q',p)}$ as in~(\ref{e:decompSnjLD}) and applies the bounds obtained in part in Proposition~\ref{pro:decompSnjLD:paper3}.

\medskip

It is important to note that Theorem~\ref{thm:cons} holds whatever the relative
growth of $(\gamma_j)$ and $(n_j)$ but it only provides a bound. This bound will
be sufficient to derive a consistent estimator of the long memory parameter
$K+\delta(q_0)$, see Theorem~\ref{thm:cons-wavelet-log-regression} below.

Obtaining a sharp rate of convergence of the centered scalogram and its
asymptotic limit is of primary importance in statistical applications but this
can be quite a complicated task. We exhibit several cases in
\cite{clausel-roueff-taqqu-tudor-2011b,clausel-roueff-taqqu-tudor-2013} that
underline the wild diversity of the asymptotic behavior of the centered
scalogram. In general the nature of the limit depends on the relative growth of
$(\gamma_j)$ and $(n_j)$. We will show, however, that if $n_j\ll\gamma_j^{\nu_c}$,
where $\nu_c$ is a critical exponent, then the reduction principle holds. In
this case, the limit will be either Gaussian or expressed in terms of the
Rosenblatt process which is defined as follows.
\begin{definition}\label{d:HP}
The Rosenblatt process of index $d$ with
\begin{equation}\label{e:qd}
1/4<d<1/2\;,
\end{equation}
is the continuous time process
\begin{equation}\label{e:harmros}
Z_{d}(t)= \int_{\mathbb{R}^{2}}^{\prime\prime} \frac{\rme^{\rmi
(u_1+u_2)\,t}- 1} {\rmi(u_1+u_2)}|u_1|^{-d}|u_2|^{-d}\;
\rmd\widehat{W}(u_1) \rmd\widehat{W}(u_2),\,t\in\mathbb{R}\;.
\end{equation}
\end{definition}
The multiple integral~(\ref{e:harmros}) with respect to the complex-valued Gaussian random measure $\widehat{W}$ is defined in
Appendix~\ref{s:appendixA}. The symbol
$\int_{\mathbb{R}^{2}}^{\prime\prime}$ indicates that one does not
integrate on the diagonal $u_1=u_2$. The integral is well-defined
when~(\ref{e:qd}) holds because then it has finite $L^2$ norm. This
process is self--similar with self-similarity parameter
\[
H=2d \in (1/2,1),
\]
that is for all $a>0$, $\{Z_{d}(at)\}_{t\in\mathbb{R}}$ and $\{a^H
Z_{d}(t)\}_{t\in\mathbb{R}}$ have the same finite--dimensional distributions,
see~\cite{Taq79}. When $t=1$, $Z_d(1)$ is said to have the Rosenblatt
distribution. This distribution is tabulated in \cite{veillette:taqqu:2013}.

\section{Reduction principle at large scales}\label{s:main}
We shall now state the main results and discuss them. They are
proved in the following sections. We use
$\overset{\mathcal{L}}{\longrightarrow}$ to denote convergence in law.

The following result involving the case
\[
G=\frac{c_{q_0}}{q_0!}H_{q_0},\,c_{q_0}\neq 0,\,q_0\geq 1\;,
\]
is proved in Theorem 3.2 of \cite{clausel-roueff-taqqu-tudor-2011b} and will serve as reference~:
\begin{theorem}\label{th:main:paper2}
  Suppose that Assumptions \textbf{A}~\ref{item:X} and
  \textbf{A}~\ref{item:wave} hold with $M\geq K+\delta(q_0)$, where
  $\delta(\cdot)$ is defined in~(\ref{e:ldparamq}). Assume that $Y$ is a
  non--linear time series such that $\Delta^K Y=
  \frac{c_{q_0}}{q_0!}H_{q_0}(X)$, with $q_0\geq 1$ and $q_0<1/(1-2d)$. Define
  the centered multivariate scalogram $\overline{\bS}_{n,j}$ related to $Y$
  by~(\ref{e:snjm}) and let $(n_j)$ and     $(\gamma_j)$ be any two diverging sequences of integers.
  \begin{enumerate}[label=(\alph*)]
  \item\label{item:th-case-gauss-paper2} Suppose $q_0=1$ and that
    $(\gamma_j)$ is a sequence of even integers. Then, as $j\to\infty$,
\begin{equation}
\label{e:q01}
n_j^{1/2}\gamma_j^{-2(d+K)}
\overline{\bS}_{n_j,j}
\overset{\mathcal{L}}{\longrightarrow}
c_1^2\mathcal{N}(0,\Gamma) \; ,
\end{equation}
where $\Gamma$ is the $m\times m$ matrix with entries
\begin{equation}
  \label{eq:GammaDef}
  \Gamma_{\ell,\ell'}= 4\pi (f^*(0))^2 \;  \;
\int_{-\pi}^\pi \left|
\sum_{p\in\mathbb{Z}}
|\lambda+2p\pi|^{-2(K+d)}
[\widehat{h}_{\ell,\infty}\overline{\widehat{h}_{\ell',\infty}}](\lambda+2p\pi)
\right|^2 \, \rmd\lambda\;,\quad 1\leq \ell, \ell'\leq m\; .
\end{equation}
\item\label{item:th-case-rozen-paper2} Suppose $q_0\geq 2$.
Then as $j\to\infty$,
\begin{equation}
\label{e:q02}
n_j^{1-2d} \gamma_{j}^{-2(\delta(q_{0})+K)} \overline{\bS}_{n_j,j}
\overset{\mathcal{L}}{\longrightarrow} \frac{c_{q_0}^2}{(q_0-1)!}\,f^*(0)^{q_0}\,\K_{q_0-1} \,
  Z_d(1) \;,
\end{equation}
where $Z_d(1)$ is the Rosenblatt process in~(\ref{e:harmros}) evaluated at time
$t=1$, $f^*(0)$ is the \emph{short-range spectral density} at zero frequency
in~(\ref{e:sdf0}) and where for any $p\geq 1$, $\K_{p}$ is the deterministic $m$-dimensional
vector $[L_{p}(\widehat{h}_{\ell,\infty})]_{\ell=1,\dots,m}$ with finite
entries defined by
\begin{equation}\label{e:defKp}
L_{p}(g)=\int_{\mathbb{R}^{p}}\frac{|g(u_1+\cdots+u_{p})|^{2}}
{|u_1+\cdots+u_{p}|^{2K}}\; \prod_{i=1}^{p} |u_i|^{-2d}\; \rmd
u_1\cdots\rmd u_{p} \;,
\end{equation}
for any $g:\mathbb{R}\to\mathbb{C}$.
  \end{enumerate}
\end{theorem}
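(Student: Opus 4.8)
The plan is to exploit the fact that, because $\Delta^K Y=\frac{c_{q_0}}{q_0!}H_{q_0}(X)$, every wavelet coefficient $W_{\ell,j,k}$ lies in the $q_0$-th Wiener chaos. Using the harmonizable representation of $X$ recalled in Appendix~\ref{s:appendixA}, together with the spectral factor $(1-\rme^{-\rmi\lambda})^{-K}$ produced by $\Delta^{-K}$ and the filter $\widehat{\bh}_j$, I would first write each coefficient as a multiple Wiener--Itô integral
\begin{equation*}
W_{\ell,j,k}=\int_{\mathbb{R}^{q_0}}^{\prime\prime}\rme^{\rmi\gamma_j k(u_1+\cdots+u_{q_0})}\,\widehat{g}_{\ell,j}(u_1+\cdots+u_{q_0})\prod_{i=1}^{q_0}\sqrt{f(u_i)}\,\rmd\widehat{W}(u_i)\;,
\end{equation*}
where $\widehat{g}_{\ell,j}$ gathers $\widehat{h}_{\ell,j}$ and the $\Delta^{-K}$ factor (up to the periodization terms handled in the appendix). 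Next I would apply the product formula for multiple integrals, which expands $W_{\ell,j,k}^2$ as $\sum_{r=0}^{q_0} r!\binom{q_0}{r}^2 I_{2q_0-2r}(\cdot)$, the contraction over $r$ pairs producing the chaos of order $2q_0-2r$. Centering removes the $r=q_0$ term (the mean), so $\overline{\bS}_{n_j,j}$ becomes a sum of multiple integrals of even orders $2,4,\dots,2q_0$. Since the memory exponent $\delta(2p)$ of the order-$2p$ component is decreasing in $p$ (see~(\ref{e:ldparamq})), the lowest order, i.e.\ the second-chaos term obtained by contracting $q_0-1$ pairs, carries the strongest dependence and should govern the limit.

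When $q_0=1$ only the second-chaos term is present. Because the wavelet has $M\geq K+\delta(q_0)$ vanishing moments, the sequence $k\mapsto W_{\ell,j,k}$ is, at each fixed scale, a short-range dependent Gaussian sequence, so its covariance remains summable after squaring and a central limit theorem for second-chaos quadratic forms applies (for instance via a fourth-moment criterion). The remaining work is the computation of the limiting covariance matrix: letting $j\to\infty$ so that $\widehat{h}_{\ell,j}\to\widehat{h}_{\ell,\infty}$ and replacing the spectral density near zero by $f^*(0)|\lambda|^{-2(K+d)}$ yields the expression~(\ref{eq:GammaDef}) for $\Gamma$, the normalization $n_j^{1/2}\gamma_j^{-2(d+K)}$ being exactly the one that makes this variance finite and non-degenerate.

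When $q_0\geq2$ the long-memory condition $q_0<1/(1-2d)$ forces $d>1/4$, hence $\delta(2)=2d-1/2>0$ and the second-chaos term is genuinely long-range dependent. I would show that, after contracting $q_0-1$ pairs of variables, the two free variables each carry a factor $\sqrt{f(u)}\sim\sqrt{f^*(0)}\,|u|^{-d}$, so that the contracted kernel converges near the origin, as $j\to\infty$, to the kernel $|u_1|^{-d}|u_2|^{-d}$ of the Rosenblatt process~(\ref{e:harmros}); the $q_0-1$ integrated-out variables produce the finite constant $f^*(0)^{q_0}L_{q_0-1}(\widehat{h}_{\ell,\infty})$, whose integrability is precisely guaranteed by~(\ref{e:longmemorycondition}) and which is collected in $\K_{q_0-1}$ through~(\ref{e:defKp}). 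Thus the normalized second-chaos term converges to $\frac{c_{q_0}^2}{(q_0-1)!}f^*(0)^{q_0}\K_{q_0-1}Z_d(1)$, while the higher-order chaos terms, having summable covariances or a strictly faster rate $n_j^{-1/2}\ll n_j^{-(1-2d)}$, vanish under the normalization $n_j^{1-2d}\gamma_j^{-2(\delta(q_0)+K)}$.

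The main obstacle is the joint nature of the limit, with both $n_j\to\infty$ and $\gamma_j\to\infty$ simultaneously: one must establish $L^2$-convergence of the dominant contracted kernel to the Rosenblatt kernel in a way that is uniform in the scale, while at the same time obtaining uniform bounds showing that the subdominant chaos components are negligible. This is where careful spectral estimates on the periodized filters and on the near-zero behavior of $f$ are required, and it constitutes the technical heart of the argument.
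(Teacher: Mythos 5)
Your outline is essentially the proof the paper relies on: Theorem~\ref{th:main:paper2} is not reproved here but quoted from Theorem~3.2 of \cite{clausel-roueff-taqqu-tudor-2011b}, whose argument is exactly the chaos expansion you describe --- the product formula turns $\bW_{j,k}^2$ into the terms $\bS_{n_j,j}^{(q_0,q_0,p)}$ of Proposition~\ref{pro:decompSnjLD:paper3}, the contraction with $p=q_0-1$ gives the dominant second-chaos term whose kernel converges in $L^2$ to the Rosenblatt kernel (a Gaussian CLT for the quadratic form when $q_0=1$), and the subdominant contractions are controlled by bounds of the type in Proposition~\ref{pro:upperB}. Your combinatorics also check out, since $(q_0-1)!\binom{q_0}{q_0-1}^2\left(c_{q_0}/q_0!\right)^2=c_{q_0}^2/(q_0-1)!$ is precisely the constant appearing in~(\ref{e:q02}).
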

Thus Theorem~\ref{th:main:paper2} states that in the case $G=H_{q_0}$, $q_0\geq
1$ the limit of the scalogram is either Gaussian or has a Rosenblatt
distribution\footnote{This case corresponds to $\ellset=\{0\}$ using the notation introduce in~(\ref{eq:qell-def}) below.}. Our main result
Theorem~\ref{th:main:paper4} states that beyond this simple case, the limits
continue to be either Gaussian or Rosenblatt under fairly general conditions,
involving $n_j$ and $\gamma_j$, namely that $n_j\ll \gamma_j^{\nu_c}$ as
$j\to\infty$ where $\nu_c$ is a positive (possibly infinite) {\it critical
  exponent} given in Definition~\ref{def:nuc}, see Section~\ref{sec:critical-exponent} for details.

\begin{theorem}\label{th:main:paper4}
  Suppose that Assumptions \textbf{A} hold with $M\geq K+\delta(q_0)$, where $\delta(\cdot)$
  is defined in~(\ref{e:ldparamq}) and that
  \begin{equation}
    \label{eq:d-cond}
    d\notin \{1/2-1/(2q)~:~q=1,2,3,\dots\}\;.
  \end{equation}
Define the centered
  multivariate scalogram $\overline{\bS}_{n,j}$ related to $Y$ by~(\ref{e:snjm}). Let $(n_j)$ be any
  diverging sequence of integers such that, as $j\to\infty$,
  \begin{equation}
    \label{eq:reduction-condition}
    n_j\ll\gamma_j^{\nu_c}\;,
  \end{equation}
where $\nu_c$ is given in Definition~\ref{def:nuc} below. Then, the following limits hold depending on the value of $q_0$.
  \begin{enumerate}[label=(\alph*)]
  \item\label{item:th-case-gauss-paper4}
If $q_0=1$ and $\gamma_j$ even, then,  the convergence~(\ref{e:q01}) holds.
\item\label{item:thm-case-rozen} If $q_0\geq 2$, then, the convergence~(\ref{e:q02}) holds.
  \end{enumerate}
\end{theorem}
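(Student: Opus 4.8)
The plan is to reduce the full non-linear problem to the purely Hermite-monomial case already handled by Theorem~\ref{th:main:paper2}, by showing that, under the growth condition~(\ref{eq:reduction-condition}), all contributions to the centered scalogram other than the one coming from the leading Hermite term $c_{q_0}H_{q_0}/q_0!$ are asymptotically negligible after the appropriate normalization. Concretely, write $G=\sum_{q\geq q_0}(c_q/q!)H_q$ and use the chaos decomposition of $\overline{\mathbf{S}}_{n_j,j}$ into terms $\mathbf{S}_{n_j,j}^{(q,q',p)}$ referred to in the proof sketch of Theorem~\ref{thm:cons} (the decomposition~(\ref{e:decompSnjLD})). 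The single term $(q,q',p)=(q_0,q_0,0)$ reproduces, up to the factor $c_{q_0}^2/(q_0!)^2$, the scalogram of $H_{q_0}(X)$ whose limit is given by Theorem~\ref{th:main:paper2}. The whole content of the theorem is therefore that the sum of all remaining terms, normalized by the same factor $n_j^{1/2}\gamma_j^{-2(d+K)}$ (if $q_0=1$) or $n_j^{1-2d}\gamma_j^{-2(\delta(q_0)+K)}$ (if $q_0\geq2$), tends to $0$ in $L^2$.

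First I would invoke Theorem~\ref{th:main2:paper4} (announced in Section~\ref{s:decomposition} as the result on which the present theorem is based) to obtain, for each admissible triple $(q,q',p)$, a sharp $L^2$ bound on $\mathbf{S}_{n_j,j}^{(q,q',p)}$ expressed as a power of $\gamma_j$ times a power of $n_j$, the exponents being explicit functions of $q,q',p,d,K$. The key observation is that these exponents determine, for each term, a threshold on the relative growth of $n_j$ versus $\gamma_j$ below which that term is dominated by the leading $(q_0,q_0,0)$ term. The critical exponent $\nu_c$ of Definition~\ref{def:nuc} is precisely the infimum, over all non-leading admissible triples, of these thresholds; thus the hypothesis $n_j\ll\gamma_j^{\nu_c}$ is exactly what guarantees that every non-leading term is $o(1)$ after normalization. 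I would organize this as: (i) normalize the decomposition; (ii) apply the bound of Theorem~\ref{th:main2:paper4} term by term; (iii) compare exponents against the leading term and check that $n_j\ll\gamma_j^{\nu_c}$ forces each normalized non-leading term to vanish; (iv) conclude by Slutsky that the normalized $\overline{\mathbf{S}}_{n_j,j}$ has the same limit as its leading term, to which Theorem~\ref{th:main:paper2} applies.

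Two technical points need care. The sum over $(q,q')$ is infinite, so after the term-by-term bounds one must sum a series of negligible contributions and verify it still tends to $0$; this is where Assumption~\textbf{A}\ref{item:LMass}, in particular the decay~(\ref{e:condcv}) of the Hermite coefficients $c_q$, enters, ensuring the tail of the series is summable uniformly in $j$ and that the vanishing is not destroyed by accumulation over high-order chaoses. The hypothesis~(\ref{eq:d-cond}) excluding the boundary values $d=1/2-1/(2q)$ is used to avoid the logarithmic corrections flagged in the footnote to~(\ref{e:dq>0}), keeping the exponents in the bounds clean.

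The main obstacle is the correct bookkeeping of exponents that defines $\nu_c$ and the proof that the critical threshold is attained exactly at the transition and not missed by some term. Since the limit changes qualitatively with the relative growth of $n_j$ and $\gamma_j$ (the paper stresses that outside this regime the limit may be a higher-order Hermite process), the bound of Theorem~\ref{th:main2:paper4} must be genuinely sharp, not merely an upper estimate, for the comparison of exponents to be decisive; I expect the delicate part to be checking that no non-leading triple $(q,q',p)$ has a growth rate matching the leading term under $n_j\ll\gamma_j^{\nu_c}$, which amounts to confirming that $\nu_c$ as defined in Definition~\ref{def:nuc} is the correct minimizer. Once that is established, the convergence in law follows immediately from Theorem~\ref{th:main:paper2} together with the $L^2$ negligibility of the remainder.
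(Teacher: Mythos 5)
Your overall strategy --- expand the scalogram in Wiener chaos, bound every non-leading term, show that $n_j\ll\gamma_j^{\nu_c}$ forces these to be negligible after normalization, and conclude from Theorem~\ref{th:main:paper2} --- is exactly the paper's route (the paper delegates the negligibility statement to Theorem~\ref{th:main2:paper4}, whose proof rests on the per-triple bounds of Proposition~\ref{pro:upperB} and the sharp rate of Proposition~\ref{prop:sharp-rates-mainterm}, not on Theorem~\ref{th:main2:paper4} itself as you suggest). However, there is a genuine error in your identification of the dominating term. You take it to be $\bS_{n_j,j}^{(q_0,q_0,0)}$, which lives in the Wiener chaos of order $2q_0$, and you claim it ``reproduces the scalogram of $H_{q_0}(X)$''. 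Neither is correct when $q_0\geq2$: the scalogram of $H_{q_0}(X)$ is the full sum $\sum_{p=0}^{q_0-1}p!\binom{q_0}{p}^2\bS_{n_j,j}^{(q_0,q_0,p)}$ (see~(\ref{e:decompSnjLD})), and the term that survives the normalization $n_j^{1-2d}\gamma_j^{-2(\delta(q_0)+K)}$ is $\bS_{n_j,j}^{(q_0,q_0,q_0-1)}$, the \emph{second-order} chaos component --- which is why the limit in~(\ref{e:q02}) is a Rosenblatt variable, an object of the second chaos. The term $(q_0,q_0,0)$ is of order $n_j^{-1/2}\gamma_j^{2K+2\delta(q_0)}$ and is itself one of the negligible contributions, since $n_j^{-1/2}\ll n_j^{-1+2d}$ when $d>1/4$ (which $q_0\geq2$ forces). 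If you ran your comparison of exponents against $(q_0,q_0,0)$ as the benchmark, the term $(q_0,q_0,q_0-1)$ would fail to be negligible and the argument would collapse; the two triples only coincide when $q_0=1$.

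A secondary, smaller point: you insist that the bounds on the non-leading terms must be ``genuinely sharp''. They need not be. What must be sharp is the \emph{lower} bound (in fact the exact asymptotic rate) of the dominating term $\|\bS_{n_j,j}^{(q_0,q_0,q_0-1)}\|_2$, which Proposition~\ref{prop:sharp-rates-mainterm} supplies; for everything else, upper bounds suffice, and the definition of $\nu_c$ emerges from optimizing those upper bounds over $p$, then $q'$, then $q$. Your remarks on summing the infinite series over $(q,q')$ using the coefficient decay~(\ref{e:condcv}), and on the role of~(\ref{eq:d-cond}) in suppressing logarithmic corrections, are both correct and match the paper.
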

\begin{proof}
  We shall prove in Theorem~\ref{th:main2:paper4},
  see~(\ref{eq:dominating-reduction-principle}), that, under
  Conditions~(\ref{eq:d-cond}) and~(\ref{eq:reduction-condition}),
  $\overline{\bS}_{n_j,j}$ can be \emph{reduced} to a dominating term
  $\bS_{n_j,j}^{(q_0,q_0,q_0-1)}$ in the sense of the $L^2$
  norm~(\ref{eq:L2-norm}). This dominating term depends only on the term $c_{q_0}H_{q_0}(X)/(q_0!)$ of the expansion of $G(X)$. We can then apply Theorem~\ref{th:main:paper2} to conclude.
\end{proof}
This result extends Theorem~3.1 stated above, where $G$ was restricted to
$G=\frac{c_{q_0}}{q_0!}H_{q_0}$. While extending the result to a much more
general function $G$, Theorem~\ref{th:main:paper4} involves two additional
conditions. Condition~(\ref{eq:d-cond}) is merely here to avoid logarithmic
corrections, see Remark~\ref{rem:log}
below. Condition~(\ref{eq:reduction-condition}) is restrictive only when
$\nu_c$ is finite, in which case it imposes a minimal growth of the analyzing
scale $\gamma_j$ with respect to that of $n_j$.  We say that the {\it reduction
  principle holds at large scales}.  The main interest of having a reduction
principle is to conclude that the same asymptotic analysis is valid as in the
case $G=\frac{c_{q_0}}{q_0!}H_{q_0}$.

% In particular we can apply the results of
% \cite{clausel-roueff-taqqu-tudor-2011b} for the wavelet estimation of the long
% memory parameter $d_0=K+\delta(q_0)$ of $Y$ from a sample $Y_1,\dots,Y_n$.

% In Theorem~\ref{th:main:paper2} we
% could have for example $\gamma_j\ll n_j$, namely small scale asymptotics, for
% example $\gamma_j=2^{-j}$. The result in Theorem~\ref{th:main:paper4} involves
% large scale asymptotics only, where both $n_j$ and $\gamma_j$ tend to infinity
% as $j\to\infty$. Nevertheless, if $G\neq H_{q_0}$, the same asymptotic behavior
% holds for the scalogram, as for $G=H_{q_0}$, provided that the analyzing scale
% $\gamma_j$ is large enough as given by~(\ref{eq:reduction-condition}). We say
% that the {\it reduction principle holds at large scales}.
\begin{remark}
In practice such a result can be used as follows~:  If $d$, $G$ are both known, $\nu_c$ can be evaluated numerically. We then
  get a practical condition, albeit asymptotic, for the reduction
  principle. See Section~\ref{sec:hypothesis-testing} for an application.
% \begin{itemize}
% \item\item If $d$ is unknown and $G$ is known, one can estimate $d_0=K+\delta(q_0)$
%   in such a way that the  reduction
% principle holds and then one can derive the asymptotic behavior of the estimator, see
% Section~\ref{sec:appl-wavel-estim} for details.
% \end{itemize}
\end{remark}
\begin{remark}\label{rem:log}
  The values $d=1/2-1/(2q)$, $q\geq1$, constitute boundary values which already
  appear in the classical reduction theorem, see~\cite{taqqu:1975}. These
  boundary values also exist in our context. If $d=1/2-1/(2q)$, $q\geq1$, one
  gets similar results but with logarithm terms. In fact, one can show that if
  one drops the restriction~(\ref{eq:d-cond}), then the conclusion of
  Theorem~\ref{th:main:paper4} holds if
  \begin{enumerate}[label=\arabic*)]
  \item\label{item:log-cond1} $n_j\ll\gamma_j^{\nu_c}(\log \gamma_j)^{-4}$.
  \item\label{item:log-cond2} For any $\varepsilon>0$, $\log n_j=o(\gamma_j^\varepsilon)$ and
    $\log \gamma_j=o(n_j^\varepsilon)$ as $j\to\infty$.
  \end{enumerate}
The technical condition~\ref{item:log-cond2} is very weak and
condition~\ref{item:log-cond1} is the same as~(\ref{eq:reduction-condition})
up to a logarithmic correction. We assume~(\ref{eq:d-cond}) for simplicity of
the exposition.
\end{remark}
\begin{remark}
We provided in~\cite{clausel-roueff-taqqu-tudor-2013} several examples for
which different limits are obtained. In these examples one does not
have~(\ref{eq:reduction-condition}) and consequently different terms in the
decomposition in Wiener chaos of the scalogram dominate and provide different
limits. Since the limits are not the same as when $G=H_{q_0}$, the reduction
principle does not hold in these cases.
\end{remark}

\section{Critical exponent}
\label{sec:critical-exponent}

The precise description of the critical exponent given below involves a number of sequences, in particular, the
subsequence of Hermite coefficients $c_q,\,q\geq 1$ that are
\emph{non-vanishing}. We denote this subsequence by
$\{c_{q_\ell}\}_{\ell\in\ellset}$ where $(q_\ell)_{\ell\in\ellset}$ is a
(finite of infinite) increasing sequence of integers such that
\begin{equation}\label{eq:qell-def}
q_{\ell}=\mbox{ index of the }(\ell+1)\mbox{th non--zero
coefficient}\,,\quad \ell\in\ellset\;.
\end{equation}
Thus the indexing set $\ellset$ is a set of consecutive integers starting at 0
with same cardinality as the set of non-vanishing coefficients.
We set
\begin{equation}\label{eq:defJ}
  I_0=\{\ell\in\ellset~:\ell+1\in\ellset, q_{\ell+1}-q_{\ell}=1\} \;,
\end{equation}
that is, $q_{\ell}$ and $q_{\ell+1}$ take consecutive values when
$\ell\in I_0$. The set $I_0$ could be either empty (there are no
consecutive values of $q_\ell$) or not empty. Then we set
\begin{equation}\label{e:l0}
\ell_0=  \begin{cases}
\min(I_0)\geq 0\;,&\text{when $I_0$ is not empty}\;,    \\
\infty\;,&\text{when $I_0$ is  empty}\;.
  \end{cases}
\end{equation}
When $\ell_0$ is finite (that is, $I_0$ is not empty), $q_{\ell_0}$ is the
smallest index $q$ such that two Hermite coefficients $c_q$, $c_{q+1}$ are
non--zero.

We define similarly for any $r\geq 0$
\begin{equation}
  \label{eq:defIrset}
 I_{r} = \{\ell\in\ellset~:~q_{\ell+1}=q_{\ell}+r+1\}\;.
\end{equation}
which involves the terms distant by $r+1$. Finally, we extend the definition of $\ell_0$ in~(\ref{e:l0}) to any $r\geq0$ by
\begin{equation}
  \label{eq:def-ellr}
  \ell_r=\min(I_r)\;.
\end{equation}
 We also define
\begin{equation}
  \label{eq:defIrsetneqemptyset}
\Irset= \{r\geq0~:~I_r\neq\emptyset\text{ and }\delta(r+1)>0\}\;.
\end{equation}
Thus $r\in\Irset$ describes the gaps $r+1$ where $H_{r+1}(X_t)$ is long-range dependent.
Since by~(\ref{e:ldparamq}), $\delta(r+1)>0$ is equivalent to
$r+1<1/(1-2d)$, we have
\begin{equation}
  \label{eq:Irsetisfinite}
\Irset\subset\{0,1,\dots,[1/(1-2d)]-1\}\;.
\end{equation}
Finally, let
\begin{equation}\label{e:Jdset}
J_d=\left\{\ell\in\ellset~:~\delta(q_{\ell+1}-q_{\ell})>0\right\}=\left\{\ell\in\ellset~:~q_{\ell+1}<q_{\ell}+(1-2d)^{-1}\right\}\;,
\end{equation}
where we used the expression for $\delta(q)$ in~(\ref{e:ldparamq}). Note that
\begin{equation}
  \label{eq:Jd-decomp}
J_d = \bigcup_{r\in \Irset} I_{r}\;,
\end{equation}
and thus
\begin{equation}
  \label{eq:J-Irset}
  J_d\neq\emptyset\Longleftrightarrow \Irset\neq\emptyset\;.
\end{equation}
We illustrate these quantities in the following example.

\medskip

\noindent\textbf{Illustration}.
Suppose
\[
G(x)=c_1\,H_1(x)+\frac{c_3}{3!}\,H_3(x)+\frac{c_4}{4!}\,H_4(x)+\frac{c_5}{5!}\,H_5(x)+\frac{c_{24}}{24!}\,H_{24}(x)\;,
\]
where $c_1$, $c_3$, $c_4$, $c_5$ and $c_{24}$ are non-zero constants.
Then
\begin{align*}
&q_0=1,\,q_1=3,\,q_2=4,\,q_3=5,\,q_4=24\mbox{ and }\ellset=\{0,1,2,3,4\}\;,\\
& I_0=\{1,2\},\,I_1=\{0\},\,I_2=\cdots=I_{17}=\emptyset,\,I_{18}=\{3\}\;,\\
&\ell_0=1,\ell_1=0,\ell_{18}=3\;.
\end{align*}
To determine $\Irset$ we need to involve $d$. Here $q_0=1$ so $d$ can take any
value in $(0,1/2)$ to satisfy Condition~(\ref{e:longmemorycondition}) which
guarantees that $G(X)$ is long-range dependent. We need to consider the gaps of
size 1,2 and  19, namely, $r=0,1$ and $18$. Consequently, by~(\ref{e:ldparamq})
and using the fact that $\delta(q)$ is decreasing,
\begin{enumerate}[label=\alph*)]
\item\label{item:6} If $d\in(0,1/4]$, or equivalently $\delta(2)\leq0$, then $\Irset=\{0\}$.
\item\label{item:7} If $d\in(1/4,9/19]$, or equivalently $\delta(2)>0$ and $\delta(19)\leq0$, then $\Irset=\{0,1\}$.
\item\label{item:8} If $d\in(9/19,1/2)$, or equivalently $\delta(19)>0$, then $\Irset=\{0,1,18\}$.
\end{enumerate}
Finally, by~(\ref{eq:Jd-decomp}), we get for $J_d$ the following subsets of
$\ellset$. In Case~\ref{item:6}~: $J_d=I_0=\{1,2\}$, Case~\ref{item:7}~:
$J_d=I_0\cup I_1=\{0,1,2\}$ and Case~\ref{item:8}~: $J_d=I_0\cup I_1\cup I_{18}=\{0,1,2,3\}$.

\medskip

These sets and indices enter in the following definition.
\begin{definition} \label{def:nuc}
 The critical exponent is
\begin{equation*}%\label{e:nuc}
\nu_c=\left\{
\begin{array}{l}
  \infty,\mbox{ if }\ellset=\{0\}\;,\\
\\
\infty,\mbox{ if }q_0=1,\,d\leq 1/4\mbox{ and }I_0=\emptyset\;,\\
\\
\frac{d+1/2-2\delta_+(q_{\ell_0})}{d},\mbox{ if }q_0=1,\,d\leq 1/4\mbox{ and }I_0\neq\emptyset\;,\\
\\
\frac{1-2\delta_+(q_1-1)}{2d-1/2},\mbox{ if }q_0=1,\,d> 1/4,\,1\in\ellset\mbox{ and }J_d=\emptyset\;,\\
\\
\min\left(\frac{1-2\delta_+(q_1-1)}{2d-1/2},
\frac{2d+1/2-2\delta_+(q_{\ell_r})-\delta(r+1)}{\delta(r+1)}~:~r\in\Irset\right)
%\inf\limits_{\ell\in
%J(d)}\left(\frac{2d-(2\delta^+(q_{\ell})+\delta^+(q_{\ell+1}-q_{\ell})-1/2)}{\delta^+(q_{\ell+1}-q_{\ell})}\right)\right)
\mbox{ if }q_0=1,\,d> 1/4\mbox{ and }J_d\neq\emptyset\;, \\
\\
\infty,\mbox{ if }q_0\geq 2\mbox{ and }I_0=\emptyset\;,\\
\\
1+\frac{4(\delta(q_0)-\delta_+(q_{\ell_0}))}{1-2d},\mbox{ if }q_0\geq 2\mbox{
  and }I_0\neq\emptyset\;.
\end{array}\right.
\end{equation*}
\end{definition}
The exponent $\nu_c$ depends on $d$ and on the function $G$ through the
expansion coefficient indices $(q_{\ell})_{\ell\in\ellset}$ defined
in~(\ref{eq:qell-def}). In fact one has
\begin{proposition}\label{pro:crit:expo}
  Every possible sequence $(q_{\ell})_{\ell\in\ellset}$ and every value of $d$
  satisfying~(\ref{e:longmemorycondition}) give rise to a $\nu_c\in(0,\infty]$.
\end{proposition}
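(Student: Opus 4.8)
The plan is to verify $\nu_c\in(0,\infty]$ branch by branch following Definition~\ref{def:nuc}. First I would argue that the seven branches are exhaustive and consistent: for given $q_0\geq1$ and $d\in(0,1/2)$ satisfying~(\ref{e:longmemorycondition}), splitting first on whether $\ellset=\{0\}$, then (when $\ellset\neq\{0\}$, so that $1\in\ellset$) on $q_0=1$ versus $q_0\geq2$, and then on the dichotomies $d\leq1/4$/$d>1/4$ and $I_0=\emptyset$/$I_0\neq\emptyset$ (resp.\ $J_d=\emptyset$/$J_d\neq\emptyset$), covers every case; where branches overlap (for instance $\ellset=\{0\}$ forces $I_0=\emptyset$) they all return $\infty$, so $\nu_c$ is well defined. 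The three branches returning $\nu_c=\infty$ need no argument, so it remains to show that the explicit expressions in the other four branches are strictly positive and finite.

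The whole computation rests on three elementary properties of $\delta(q)=qd-(q-1)/2$ from~(\ref{e:ldparamq}). Its slope in $q$ is $d-1/2<0$, so $\delta$ is strictly decreasing; hence $\delta(q)\leq\delta(1)=d$ for $q\geq1$, and $\delta(q_\ell)\leq\delta(q_0)$ for every $\ell\in\ellset$ since $q_\ell\geq q_0$. Combining these with $d>0$ and with the long-memory condition $\delta(q_0)>0$ gives $\delta_+(q)\leq d$ for $q\geq1$ and $\delta_+(q_\ell)\leq\delta(q_0)$ for $\ell\in\ellset$. Finally, $d<1/2$ forces $\delta(r+1)=(r+1)d-r/2<1/2$ for every $r\geq0$.

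With these bounds the positivity is immediate. For $q_0=1$, $d\leq1/4$, $I_0\neq\emptyset$, the denominator $d$ is positive and, using $\delta_+(q_{\ell_0})\leq d$ (valid since $q_{\ell_0}\geq q_0=1$), the numerator obeys $d+1/2-2\delta_+(q_{\ell_0})\geq 1/2-d>0$. For $q_0\geq2$, $I_0\neq\emptyset$, the denominator $1-2d$ is positive and $\delta(q_0)-\delta_+(q_{\ell_0})\geq0$ because $q_{\ell_0}\geq q_0$ and $\delta(q_0)>0$, whence $\nu_c\geq1$. Every term $\frac{1-2\delta_+(q_1-1)}{2d-1/2}$ (appearing when $q_0=1$, $d>1/4$) has positive denominator $2d-1/2$ and, since $q_1-1\geq1$, numerator $\geq 1-2d>0$. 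Each remaining term $\frac{2d+1/2-2\delta_+(q_{\ell_r})-\delta(r+1)}{\delta(r+1)}$ of the minimum over $r\in\Irset$ has denominator $\delta(r+1)>0$ by definition of $\Irset$ and, since $q_{\ell_r}\geq q_0=1$, numerator $\geq 1/2-\delta(r+1)>0$.

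It remains to note that every index invoked exists in its branch: $q_1$ because $\ellset\neq\{0\}$ — and $J_d\neq\emptyset$ even forces $\{0,1\}\subseteq\ellset$ — while $\ell_0$ and $\ell_r$ exist because $I_0\neq\emptyset$ and $I_r\neq\emptyset$ there. Since $\Irset$ is finite by~(\ref{eq:Irsetisfinite}), the minimum in the last branch is over finitely many strictly positive reals and is itself strictly positive, so in all branches $\nu_c\in(0,\infty]$. I expect the only delicate point to be the bookkeeping — exhaustiveness of the branches, consistency of their overlaps, and well-definedness of the indices $q_1$, $\ell_0$, $\ell_r$ — rather than any analytic difficulty, since the monotonicity of $\delta$ together with $0<d<1/2$ and $\delta(q_0)>0$ does all the real work.
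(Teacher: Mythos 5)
Your proof is correct and follows essentially the same route as the paper's: branch-by-branch verification of positivity using the monotonicity of $\delta$, the bounds $\delta_+(q)\leq d$ for $q\geq1$ and $\delta(r+1)<1/2$, and the long-memory condition $\delta(q_0)>0$. The only cosmetic difference is in the last branch, where the paper splits into the cases $\delta_+(q_{\ell_0})=0$ and $\delta_+(q_{\ell_0})=\delta(q_{\ell_0})$ while you obtain $\delta_+(q_{\ell_0})\leq\delta(q_0)$ directly; both yield $\nu_c\geq1$ there, and your added bookkeeping on the exhaustiveness of the branches is harmless.
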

\begin{proof}
See Section~\ref{s:proof:pro:crit:expo}.
\end{proof}
The value
$\nu_c=\infty$ is the simplest case since then the reduction principle holds
whatever the respective growth rates of the diverging sequences $(n_j)$ and
$(\gamma_j)$ are. This happens for instance when there are no consecutive
non--zeros coefficients ($I_0=\emptyset$) and either $q_0=1$ and $d\leq 1/4$ or
$q_0\geq 2$ (which implies $d>1/4$).

\section{Examples}\label{sec:examples}
In this section, we examine some specific cases of functions $G$. We always
assume that $G$ satisfies Assumption \textbf{A}~\ref{item:LMass}.
\subsection{$G$ is even}
If $G$ is an even function then $q_0\geq2$ and $I_0=\emptyset$ because the
Hermite expansion has only even terms. Hence $\nu_c=\infty$
and the reduction principle applies for any diverging sequences $(n_j)$ and
$(\gamma_j)$.
\subsection{$G$ is odd}
If $G$ is an odd function then we have again $I_0=\emptyset$ since the
Hermite expansion has no even terms. But unlike the even
case, we may have $q_0=1$. If it is not the case, then  $q_0\geq3$ so that $\nu_c=\infty$
and the reduction principle applies for any diverging sequences $(n_j)$ and
$(\gamma_j)$. If $q_0=1$ and $d\leq1/4$, we find again $\nu_c=\infty$. If
$q_0=1$ and $d>1/4$,  the formula of the exponent $\nu_c$ is more involved and
takes various  possible forms, see Section~\ref{sec:324} for one of the
possible cases, namely $I_0=\emptyset$, $q_0=1$ and $\delta(q_1)>0$.

\subsection{$I_0\neq\emptyset$ and  $q_0\geq2$}
This corresponds to the class studied in Section~3.1 of
\cite{clausel-roueff-taqqu-tudor-2013} with the additional condition
$\delta(q_{\ell_0}+1)>0$ (see (3.3) in this reference). Using this additional condition, we have $\delta(q_{\ell_0})>0$ since $\delta(q)$ is decreasing. Hence $\delta_+(q_{\ell_0})=\delta(q_{\ell_0})$ and
$$
\nu_c=1+\frac{4(\delta(q_0)-\delta_+(q_{\ell_0}))}{1-2d}=1+\frac{4(\delta(q_0)-\delta(q_{\ell_0}))}{1-2d}=1+2(q_{\ell_0}-2q_0)\;.
$$
This value of $\nu_c$ corresponds to the
exponent $\nu$ defined in~(3.4) and appearing in Theorem~3.1 of
\cite{clausel-roueff-taqqu-tudor-2013}. This theorem shows that if the opposite
condition to~(\ref{eq:reduction-condition}) holds, namely, $\gamma_j^{\nu_c}\ll
n_j$, then the reduction principle does not apply since the limit is Gaussian
instead of Rosenblatt. We say that the \emph{reduction principle does not apply
  at small scales}. In Theorem~\ref{th:main:paper4}, the reduction principle is
proved even when $\delta_+(q_{\ell_0}+1)=0$, but whether the reduction
principle does apply or not at small scales, namely if $\gamma_j^{\nu_c}\ll
n_j$, remains an open question.

\subsection{$I_0=\emptyset$, $q_0=1$ and $\delta(q_1)>0$}
\label{sec:324}
The expansion of $G$ contains $H_1$ but does not contain any two
consecutive polynomials. This corresponds to the class studied in Section~3.2
of \cite{clausel-roueff-taqqu-tudor-2013} (see (3.8) in this reference).  The
exponent $\nu_c$ simplifies as follows.  First observe that  $\delta(q_1)>0$
implies $\delta_+(q_1-1)>0$, so that $q_1\in\Irset$, and also $\delta(2)>0$
and hence $d>1/4$. We thus need to focus on the term of $\nu_c$  in
Definition~\ref{def:nuc} involving $\min$.  Using~(\ref{e:ldparamq}), for the first
term in the min
\begin{equation}\label{eq:exple-nuc-calc}
  \frac{1-2\delta_+(q_1-1)}{2d-1/2} = \frac{(q_1-1)(1-2d)}{\delta(2)}\;,
\end{equation}
which corresponds to the exponent $\nu_2$ in (3.10) of
\cite{clausel-roueff-taqqu-tudor-2013}.  Now focus on the second term in the
min. Take any $r\in\Irset$ and consider ${\ell_r}$ defined
in~(\ref{eq:def-ellr}). Note that $q_{\ell_r}$ is the smallest Hermite
polynomial index of the expansion of $G$ such that the next one appears after a
gap equal to $r+1$.  There are only two possibilities : (a) either
$q_{\ell_r}=q_0=1$, (b) or $q_{\ell_r}\geq q_1$. In case (a), we have
$r+1=q_{\ell_r+1}-q_{\ell_r}=q_1-1$ and thus
\begin{equation}
  \label{eq:exple-nuc-calc-nu1}
\frac{2d+1/2-2\delta_+(q_{\ell_r})-\delta(r+1)}{\delta(r+1)}=\frac{1/2-\delta(q_1-1)}{\delta(q_1-1)}\;,
\end{equation}
which corresponds to the exponent $\nu_1$ in (3.10) of
\cite{clausel-roueff-taqqu-tudor-2013}. In case (b),
using  $r+1\geq2$ (since $I_0=\emptyset$) and  $q_{\ell_r}\geq q_1$, we get
$$
\frac{2d+1/2-2\delta_+(q_{\ell_r})-\delta(r+1)}{\delta(r+1)} \geq
\frac{2d+1/2-2\delta_+(q_1)-\delta(2)}{\delta(2)} =\frac{q_1(1-2d)}{\delta(2)}>
\frac{(q_1-1)(1-2d)}{\delta(2)}\;,
$$
which already appeared in~(\ref{eq:exple-nuc-calc}). Therefore
with~(\ref{eq:exple-nuc-calc}) and~(\ref{eq:exple-nuc-calc-nu1}) and
Definition~\ref{def:nuc} of $\nu_c$ for $q_0=1$ and $d>1/4$, we get
$$
\nu_c=\min\left(\frac{(q_1-1)(1-2d)}{\delta(2)},\frac{1/2-\delta(q_1-1)}{\delta(q_1-1)}\right)\;,
$$
which corresponds to $\min(\nu_1,\nu_2)$ using the definitions in (3.10) of
\cite{clausel-roueff-taqqu-tudor-2013}. Hence the reduction principle
established in Theorem~\ref{th:main:paper4} under the condition
$n_j\ll\gamma_j^{\nu_c}$ corresponds to the cases $n_j\ll\gamma_j^{\nu_1}$ and
$n_j\ll\gamma_j^{\nu_2}$ of Theorems~3.3 and~3.5 in
\cite{clausel-roueff-taqqu-tudor-2013}, respectively. These two theorems
further show that when the additional condition $\delta(q_{1})>0$ holds the
reduction principle does not hold under the opposite condition
$\gamma_j^{\nu_c}\ll n_j$, illustrating the fact that the reduction principle
may not hold at small scales.
% \subsubsection{$I_0=\emptyset$, $q_0=1$ and $\delta_+(q_1)=0$}
% In Theorem \ref{th:main:paper4}, the reduction principle described in
% Section~\ref{sec:324} is extended to $\delta_+(q_1)=0$. However, it is not
% known in this case whether the reduction principle also applies if
% $\gamma_j^{\nu_c}\ll n_j$.

\section{Application to wavelet statistical inference}\label{sec:appl-wavel-estim}
\subsection{Wavelet inference setting}\label{sec:stat-inf-setting}
Suppose that we observe a sample $Y_1,\dots,Y_N$ of $Y$. Recall that $Y$ has long
memory parameter $d_0=K+\delta(q_0)$. In this section, we assume that we are given an unidimensional wavelet filter $g_j$ satisfying Assumptions~\ref{ass:w-ap}--\ref{ass:w-c} in Appendix~\ref{s:appendixB} (see also~(\ref{eq:wav_coeff_def1_dyad}) and~(\ref{eq:multiscale-filters})). Then one can derive the wavelet estimator
\begin{equation}\label{e:hat-d0-def}
\hat d_{0} = \sum_{i=0}^p w_i\log \hat\sigma_{j+i}^2 \;,
\end{equation}
where  $w_0,\dots,w_p$ are well chosen weights and $(\hat\sigma_j^2)_{i\leq j\leq i+p}$ denotes the multiscale scalogram
obtained from $Y_1,\dots,Y_N$,
\begin{equation}\label{e:snjbold-notcentered}
(\hat\sigma_j^2)_{i\leq j\leq i+p}=\mathbf{S}_{n_j,j}=\frac{1}{n_j}\sum_{k=0}^{n_j-1}\mathbf{W}_{j,k}^2\;,
\end{equation}
(see Appendix~\ref{s:appendixC} for more details). In
this setting, $(\gamma_j)$ and $(n_j)$ are specified as follows
\begin{align}\label{eq:wave-set-gammajnj}
\gamma_j=2^j \quad\text{and}\quad n_j=N2^{-j}+O(1)\;.
\end{align}
As usual in this setting the asymptotics are to be understood as $N\to\infty$
with a well chosen diverging sequence $j=j_N$ such that
\begin{align}\label{eq:wave-set-Nj}
\lim_{N\to\infty} N2^{-j} =\infty\;,
\end{align}
and thus $(n_j)$ diverge as $N\to\infty$. We refer to
\cite[Theorem~1]{moulines:roueff:taqqu:2007:jtsa} for the asymptotic behavior of the
mean of the scalogram
\begin{equation}
  \label{eq:scalogram-exp}
  \mathbb{E}\left[\hat\sigma_{j}^2\right] =
C\, 2^{2 d_0 j} \;\left(1+O(2^{-\zeta   j})\right)\;,
\end{equation}
where $C$ is a positive constant and $\zeta$ is an exponent satisfying the
conditions of Theorem~\ref{thm:spec-density}. This relation follows from
Theorem~\ref{thm:spec-density}, provided that $M\geq d_0-1/2$.  Choosing
weights such that $\sum_i w_i=0$ and $\sum_i iw_i=1/(2\log 2)$ then yields
\begin{equation}
  \label{eq:logscalogram-bias}
\sum_{i=0}^p w_i\log \mathbb{E}\left[\hat\sigma_{j+i}^2\right] =
d_0+O(2^{-\zeta j})\;.
\end{equation}
%Finally, we refer to \cite{clausel-roueff-taqqu-tudor-2011b} for the precise
%definition of the sequence $(\bh_j)$ which leads to

\subsection{Consistency}
We now state a consistency result.
\begin{theorem}\label{thm:cons-wavelet-log-regression}
  Consider the wavelet estimation
  setting~(\ref{e:hat-d0-def})--(\ref{eq:wave-set-Nj}) and suppose that
  Assumptions \textbf{A} hold with $M\geq K+\delta(q_0)$. Then, as $N\to\infty$, $\hat d_0$
  converges to $d_0$ in probability.
\end{theorem}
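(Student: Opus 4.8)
The plan is to split the estimation error into a deterministic bias term and a stochastic fluctuation term and to show that each vanishes --- the first deterministically, the second in probability. Since $\sum_{i=0}^p w_i=0$, we may insert $d_0$ and the means $\mathbb{E}[\hat\sigma_{j+i}^2]$ freely and write
\[
\hat d_0 - d_0 = \left(\sum_{i=0}^p w_i\log \mathbb{E}[\hat\sigma_{j+i}^2] - d_0\right)
+ \sum_{i=0}^p w_i\log\!\left(\frac{\hat\sigma_{j+i}^2}{\mathbb{E}[\hat\sigma_{j+i}^2]}\right)\;.
\]
The first parenthesis is exactly the left-hand side of~(\ref{eq:logscalogram-bias}) minus $d_0$, hence is $O(2^{-\zeta j})$, which tends to $0$ as $j=j_N\to\infty$. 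This step uses only~(\ref{eq:scalogram-exp}), the weight normalizations $\sum_i w_i=0$ and $\sum_i iw_i=1/(2\log 2)$, and $M\geq d_0-1/2$, all of which are granted under Assumptions \textbf{A} with $M\geq K+\delta(q_0)$.

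For the fluctuation term, I would treat each scale $j'=j+i$, $i\in\{0,\dots,p\}$, separately and introduce the relative centered scalogram
\[
R_{j'} = \frac{\hat\sigma_{j'}^2 - \mathbb{E}[\hat\sigma_{j'}^2]}{\mathbb{E}[\hat\sigma_{j'}^2]}\;,
\]
so that the fluctuation term equals $\sum_{i=0}^p w_i\log(1+R_{j+i})$. The numerator of $R_{j'}$ is a (component of a) centered scalogram to which Theorem~\ref{thm:cons} applies, giving an $L^2$ norm $O\bigl(\gamma_{j'}^{2d_0}n_{j'}^{-(1/2-d)}\bigr)$, while the denominator satisfies $\mathbb{E}[\hat\sigma_{j'}^2]\sim C\,2^{2d_0 j'}=C\,\gamma_{j'}^{2d_0}$ by~(\ref{eq:scalogram-exp}) with $\gamma_{j'}=2^{j'}$. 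The scale factor $\gamma_{j'}^{2d_0}$ therefore cancels and
\[
\|R_{j'}\|_2 = O\!\left(n_{j'}^{-(1/2-d)}\right)\longrightarrow 0\;,
\]
since $1/2-d>0$ and $n_{j'}\to\infty$ by~(\ref{eq:wave-set-Nj}). As the offset $i$ ranges over the fixed finite set $\{0,\dots,p\}$, all the $R_{j+i}$ converge to $0$ in $L^2$, hence in probability.

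It then remains to transfer this to the logarithm. Because $x\mapsto\log(1+x)$ is continuous at $0$ with value $0$, the continuous mapping theorem yields $\log(1+R_{j+i})\to 0$ in probability for each fixed $i$, and summing the finitely many terms $w_i\log(1+R_{j+i})$ shows the whole fluctuation term tends to $0$ in probability. To make the logarithm well defined one restricts to the event $\{|R_{j+i}|<1/2\text{ for all }i\}$, whose probability tends to $1$ by Chebyshev's inequality applied to the $L^2$ bound above (on its complement, which is asymptotically negligible, $\hat d_0$ may be defined arbitrarily). Combining the bias and fluctuation estimates gives $\hat d_0\to d_0$ in probability.

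I do not expect a genuine obstacle: the substantive analysis is already packaged in Theorem~\ref{thm:cons} and in the mean asymptotics~(\ref{eq:scalogram-exp}). The only two points needing care are the cancellation of the scale factor $\gamma_{j}^{2d_0}$ between the variance bound and the mean --- which is precisely what makes the \emph{relative} fluctuation decay like $n_j^{-(1/2-d)}$ instead of diverging --- and the control of the logarithm near the origin, which is routine once $R_{j+i}\to 0$ in probability has been established. Notably, this consistency argument requires only the crude bound of Theorem~\ref{thm:cons} and neither the reduction principle of Theorem~\ref{th:main:paper4} nor Condition~(\ref{eq:reduction-condition}).
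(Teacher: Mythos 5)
Your proposal is correct and follows essentially the same route as the paper's proof: the same bias/fluctuation decomposition via $\sum_i w_i=0$, the same application of Theorem~\ref{thm:cons} combined with~(\ref{eq:scalogram-exp}) to see that the factor $\gamma_j^{2d_0}$ cancels and the relative fluctuation is $O_P\bigl((N2^{-j})^{-(1/2-d)}\bigr)$, and the same use of~(\ref{eq:logscalogram-bias}) and~(\ref{eq:wave-set-Nj}) to conclude. Your explicit handling of the logarithm near the origin (restricting to the event $|R_{j+i}|<1/2$) is a minor point of extra care that the paper leaves implicit.
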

\begin{proof}
By~(\ref{e:hat-d0-def}), we have
\begin{align}\label{eq:centred-dhat}
\hat d_{0} - \sum_{i=0}^p w_i\log \mathbb{E}\left[\hat\sigma_{j+i}^2\right]
&=\sum_{i=0}^p w_i\, \log\left(1+\frac{\hat\sigma_{j+i}^2
    -\mathbb{E}\left[\hat\sigma_{j+i}^2\right]
  }{\mathbb{E}\left[\hat\sigma_{j+i}^2\right] }\right)  \;.
\end{align}
The numerators in the last ratio are the components of
$\overline{\mathbf{S}}_{n_j,j}$ by~(\ref{e:snjbold-notcentered}).
  By Theorem~\ref{thm:cons} and~(\ref{eq:wave-set-gammajnj}), we have
  $$\overline{\mathbf{S}}_{n_j,j}=O_P\left(\gamma_j^{2d_0}\,n_j^{-(1/2-d)}\right)
=O_P\left(2^{2d_0j}\,(N2^{-j})^{-(1/2-d)}\right)\;.$$
Hence, with~(\ref{eq:centred-dhat}) and~(\ref{eq:scalogram-exp}), we get that
$$
\hat d_{0}-\sum_{i=0}^p w_i\log \mathbb{E}\left[\hat\sigma_{j+i}^2\right] =
O_P((N2^{-j})^{-(1/2-d)})\;.
$$
Applying~(\ref{eq:logscalogram-bias}) then yields
\begin{equation}\label{e:hat-d0-approx}
\hat d_{0} = d_0+O_P\left((N2^{-j})^{-(1/2-d)}\right)+O(2^{-\zeta j})\;.
\end{equation}
The result then follows from~(\ref{eq:wave-set-Nj}).
\end{proof}
\begin{remark}
  We note that this consistency result applies without any knowledge of $G$ or
  $\beta$.%  In contrast, to obtain the precise rate of convergence and the
  % asymptotic behavior of $\widehat{d}_0-d_0$, we shall specify the sequence
  % $(\gamma_j)$ and for this require some knowledge about $G$ and $\beta$.
\end{remark}
\subsection{Hypothesis testing}\label{sec:hypothesis-testing}
Consider again a sample $Y_1,\dots,Y_N$ of $Y$ and suppose now that $G$ is
known and has Hermite rank $q_0$.

Denote by $\widetilde{d}_0$ the estimator that would be obtained instead of
$\hat d_0$ if we had $G$ replaced by $c_{q_0} H_{q_0}/(q_0!)$.
We shall apply Theorem~\ref{thm:multiscale:1} and Theorem~\ref{thm:multiscale:2} of Appendix~\ref{s:appendixC}. Theorem~\ref{thm:multiscale:1} (case $q_0=1$) derives from Theorem~2 of~\cite{roueff-taqqu-2009b} and Theorem~\ref{thm:multiscale:2} (case $q_0\geq 2$) derives from Theorem~4.1 of~\cite{clausel-roueff-taqqu-tudor-2011b}. We obtain the following~:~for
conveniently chosen diverging sequences $j=(j_N)$, there
exists some renormalization sequence $(u_N)$ such that as $N\to\infty$,
\begin{equation}\label{eq:dtilde-clt}
u_N(\widetilde{d}_0-d_0)\overset{(\mathcal{L})}{\rightarrow}U(d,K,q_0)\;,
\end{equation}
with
\begin{equation}\label{eq:uNdef}
u_N=\left\{\begin{array}{l}(N2^{-j})^{1/2}\mbox{ if }q_0=1,\\(N2^{-j})^{1-2d}\mbox{ if }q_0\geq 2,\end{array}\right.
\end{equation}
and where $U(d,K,q_0)$ is a centered Gaussian random variable if $q_0=1$ and a
Rosenblatt random variable if $q_0\geq 2$. The precise distribution of
$U(d,K,q_0)$ is given in Theorems~\ref{thm:multiscale:1} and~\ref{thm:multiscale:2}. Beside the chosen wavelet, the distribution of $U$
only depends on $d$, $K$ and $q_0$.

As application of the reduction principle in this setting, we
use~(\ref{eq:dtilde-clt}) to define a statistical test procedure which applies
to a general $G$. Let $d_0^*$ be a given possible value for the true unknown
memory parameter $d_0$ of $Y$ and consider the hypotheses
$$
H_0~:~ d_0=d_0^*\quad\text{against}\quad H_1~:~d_0\in(0, \bar K+1/2)\setminus\{d^*_0\}\;.
$$
Here $\bar K$ denotes a known {\it maximal value} for the true (possibly unknown) integration parameter
$K$. So to insure that the number $M$ of vanishing moments satisfies $M\geq
d_0$, it suffices to impose $M>\bar K$.
Since $G$ is assumed to be known, for the given value $d_0^*$, one can define
the parameters $d^*$, $K^*$ and  $\nu_c^*$ defined as $d$, $K$ and $\nu_c$ by
replacing $d_0$ by  $d_0^*$.

Let $\alpha\in(0,1)$ be a level of confidence. Define the statistical test
\begin{equation}
  \label{eq:test-d}
\delta_s=
\begin{cases}
1 &  \text{ if $|\hat d_0-d^*_0| > s_N(\alpha)$,}\\
0 &  \text{ otherwise.}
\end{cases}
\end{equation}
where $s_N(\alpha)$ is the $(1-\alpha/2)$ quantile of $U(d^*,K^*,q_0)/u_N$.

The following theorem provides conditions for the test $\delta_s$ to be
consistent  with asymptotic level of confidence $\alpha$, namely, that its
power goes to 1 and its first type error goes to $\alpha$  as $N$ goes to $\infty$.
\begin{theorem}\label{thm:test}
  Suppose that Assumptions \textbf{A}(i),(ii) hold with $M>\bar K$ and that the unidimensional wavelet filter $g_j$ satisfies Assumptions~\ref{ass:w-ap}--\ref{ass:w-c}. Assume additionally that~(\ref{eq:d-cond}) holds. Let $j=(j_N)$ be a diverging sequence such
  that~(\ref{eq:wave-set-Nj}) holds. Suppose moreover that, as $N\to\infty$,
  \begin{equation}
    \label{eq:reduction-test}
    N2^{-j}\ll 2^{j\nu_c^*}\;,
  \end{equation}
  and that there exists a positive exponent $\zeta$ satisfying~(\ref{eq:zeta-cond})
  and
  \begin{equation}
    \label{eq:bias-negligibility-condition}
2^{-\zeta j}\ll u_N^{-1}\;,
\end{equation}
with $u_N$ defined as in~(\ref{eq:uNdef}). Then, if~(\ref{eq:reduction-condition}) is
satisfied, $\delta_s$ is a consistent test with asymptotic level of confidence $\alpha$.
\end{theorem}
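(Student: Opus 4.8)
The plan is to prove the two defining properties of the test separately: under $H_0$ that the first type error $P_{H_0}(\delta_s=1)$ tends to $\alpha$, and under any fixed alternative in $H_1$ that the power $P_{H_1}(\delta_s=1)$ tends to $1$. The pivotal step is to transfer the central limit theorem~(\ref{eq:dtilde-clt}), which concerns the idealized estimator $\widetilde d_0$ built from $c_{q_0}H_{q_0}(X)/q_0!$, to the actual estimator $\hat d_0$ built from $G(X)$, by invoking the reduction principle.

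Under $H_0$ the true parameters coincide with the starred ones, so $d=d^*$, $K=K^*$, $\nu_c=\nu_c^*$ and $d_0=d_0^*$; in particular Conditions~(\ref{eq:reduction-test}) and~(\ref{eq:reduction-condition}) express the same growth constraint $n_j\ll\gamma_j^{\nu_c}$ with $\gamma_j=2^j$ and $n_j=N2^{-j}+O(1)$. Since~(\ref{eq:d-cond}) is assumed and $M>\bar K$ forces $M\geq d_0=K+\delta(q_0)$, Theorem~\ref{th:main:paper4} applies and shows that the normalized centered scalogram $\overline{\mathbf S}_{n_j,j}$ of $G(X)$ has the same limiting law as that of $c_{q_0}H_{q_0}(X)/q_0!$ given in Theorem~\ref{th:main:paper2}. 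Feeding this into the linearization~(\ref{eq:centred-dhat}), exactly as in the proof of Theorem~\ref{thm:cons-wavelet-log-regression}, and using $\log(1+x)=x+o(x)$ together with the mean asymptotics~(\ref{eq:scalogram-exp}), I would obtain $u_N\big(\hat d_0-\sum_{i}w_i\log\mathbb E[\hat\sigma_{j+i}^2]\big)\overset{\mathcal{L}}{\longrightarrow}U(d^*,K^*,q_0)$. The deterministic bias is handled by~(\ref{eq:logscalogram-bias}): choosing $\zeta$ as in~(\ref{eq:zeta-cond}) and invoking~(\ref{eq:bias-negligibility-condition}), the term $u_N\,O(2^{-\zeta j})$ vanishes, whence $u_N(\hat d_0-d_0^*)\overset{\mathcal{L}}{\longrightarrow}U(d^*,K^*,q_0)$. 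As $u_N s_N(\alpha)$ is the $(1-\alpha/2)$ quantile of $U(d^*,K^*,q_0)$ and this law has a continuous distribution function, the portmanteau theorem yields $P_{H_0}(\delta_s=1)=P\big(u_N|\hat d_0-d_0^*|>u_N s_N(\alpha)\big)\to\alpha$, the symmetry of the Gaussian limit ensuring that the two-sided region carries exact mass $\alpha$ when $q_0=1$.

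Under a fixed alternative $d_0\neq d_0^*$ the reduction principle is no longer needed: Theorem~\ref{thm:cons-wavelet-log-regression} holds for arbitrary diverging $(n_j)$ and $(\gamma_j)$, and the present hypotheses (Assumptions \textbf{A}(i),(ii), $M>\bar K$ so that $M\geq K+\delta(q_0)$, and~(\ref{eq:wave-set-Nj})) give $\hat d_0\to d_0$ in probability, hence $|\hat d_0-d_0^*|\to|d_0-d_0^*|>0$. On the other hand $s_N(\alpha)$, being the $(1-\alpha/2)$ quantile of the fixed random variable $U(d^*,K^*,q_0)$ divided by $u_N\to\infty$, tends to $0$. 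Consequently the event $\{|\hat d_0-d_0^*|>s_N(\alpha)\}$ has probability tending to $1$, i.e. the power tends to $1$.

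The main obstacle is the $H_0$ step: one must check that the reduction principle, stated at the level of the centered scalogram, genuinely propagates through the nonlinear map $\mathbf S\mapsto\sum_i w_i\log(\cdot)$ defining $\hat d_0$ and produces the very same limiting law $U(d^*,K^*,q_0)$ as for $\widetilde d_0$, with no residual bias surviving the $u_N$ normalization. This is precisely where the two growth constraints interact: (\ref{eq:reduction-test}) guarantees that the dominating Wiener-chaos term of the scalogram is the reduced one $\mathbf S_{n_j,j}^{(q_0,q_0,q_0-1)}$, while~(\ref{eq:bias-negligibility-condition}) guarantees that the scale-dependent bias of the mean scalogram is negligible at the rate $u_N^{-1}$; both must hold together with~(\ref{eq:wave-set-Nj}) for the stated conclusion.
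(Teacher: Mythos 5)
Your proposal is correct and rests on the same three pillars as the paper's proof: the reduction principle transfers the asymptotics of the scalogram from $G$ to $c_{q_0}H_{q_0}/q_0!$ under~(\ref{eq:reduction-test}), the log-regression is linearized so that the scalogram limit passes to $\hat d_0$, the bias is killed by~(\ref{eq:bias-negligibility-condition}), and the power statement follows from Theorem~\ref{thm:cons-wavelet-log-regression} together with $s_N(\alpha)\to0$. The one place where you genuinely diverge is in how the reduction is deployed under $H_0$: you apply the distributional form (Theorem~\ref{th:main:paper4}) directly to the multiscale scalogram and then push the limit through the delta method, whereas the paper works at the level of estimators, writing $\hat d_0-d_0=\widetilde d_0-d_0+\text{remainders}$ as in~(\ref{e:hat-d0-approx-precised}) and controlling the remainders via the $L^2$ reduction (Theorem~\ref{th:main2:paper4}) and the sharp rate $\|\bS_{n_j,j}^{(q_0,q_0,q_0-1)}\|_2\sim C\,u_N^{-1}2^{2d_0 j}$ from Proposition~\ref{prop:sharp-rates-mainterm}. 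The paper's route buys an explicit quantitative comparison of $\hat d_0$ with the idealized $\widetilde d_0$ (so the CLT~(\ref{eq:dtilde-clt}) is invoked as a black box); your route is more direct but silently requires that Theorem~\ref{th:main:paper4} be applied with the multivariate multiscale filters of Appendix~\ref{s:appendixC} and that these filters satisfy Assumptions~\ref{ass:w-ap}--\ref{ass:w-c} --- this verification, and the identification of the resulting limit with $U(d^*,K^*,q_0)$, is precisely what Theorems~\ref{thm:multiscale:1} and~\ref{thm:multiscale:2} encapsulate, so you should cite them rather than Theorem~\ref{th:main:paper2} at that step. Your remark that the two-sided rejection region carries exact mass $\alpha$ only by symmetry of the limit is a fair observation (and applies equally to the paper's argument in the Rosenblatt case), but it does not affect the claimed conclusion as the paper states it, namely the convergence $u_N(\hat d_0-d_0)\overset{\mathcal L}{\to}U$ under $H_0$.
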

\begin{remark}
Observe that the different conditions that have to be simultaneously satisfied by $(j_N)$ can be reformulated as follows~:
\begin{itemize}
\item $\lim_{N\to\infty} j_N=\infty$ and $\lim_{N\to\infty} N2^{-j_N}=\infty$.
\item $N2^{-j_N}\ll 2^{j_N\zeta'}$ with
\[
\zeta'=\left\{\begin{array}{l}\min(\nu_c^*,\,2\zeta)\mbox{ if }q_0\geq 1,\\ \min(\nu_c^*,\,\zeta/(1-2d))\mbox{ otherwise}.\end{array}\right.
\]
\end{itemize}
In particular, one can easily check that since $\nu_c^*$ and $\zeta$ are both positive so is $\zeta'$. Hence these conditions are not incompatible.
\end{remark}
\begin{proof}
  See Section~\ref{sec:proof-hyp-test}.
\end{proof}
\noindent\emph{Idea behind the proof of Theorem~\ref{thm:test}.}
Condition~(\ref{eq:reduction-test}) states that $n_j\ll\gamma_j^{\nu_c^*}$ and
will insure that the reduction principle holds under
$H_0$. Condition~(\ref{eq:bias-negligibility-condition}) will ensure that the
bias is negligible under $H_0$. These conditions will allow us through
Relation~(\ref{e:hat-d0-approx-precised}) to transfer the problem to the case
$G(x)=\frac{c_{q_0}}{q_0!}H_{q_0}(x)$ which was treated in
\cite{clausel-roueff-taqqu-tudor-2011b}.
\section{Decomposition in Wiener chaos}\label{s:decomposition}
As in \cite{clausel-roueff-taqqu-tudor-2011a} and \cite{clausel-roueff-taqqu-tudor-2013}, we need the expansion of the scalogram into Wiener chaos. The wavelet coefficients can be expanded in the following way~:
\begin{equation}\label{e:Wjk}
\bW_{j,k}=
\sum_{q=1}^{\infty}\frac{c_q}{q!}\bW_{j,k}^{(q)}\;,
\end{equation}
where $\bW_{j,k}^{(q)}$ is a multiple integral of order $q$. Then, using the
same convention as in~(\ref{eq:conv-square-vector}), we have
\begin{equation}\label{e:sumOFprodwjk}
\bW_{j,k}^2=
\sum_{q=1}^{\infty}\left(\frac{c_q}{q!}\right)^2\;\left(\bW_{j,k}^{(q)}\right)^2
+2 \sum_{q'=2}^\infty\sum_{q=1}^{q'-1} \frac{c_q}{q!}
\frac{c_{q'}}{q'!}\bW_{j,k}^{(q)}\bW_{j,k}^{(q')}\;,
\end{equation}
where the convergence of the infinite sums hold in $L^1(\Omega)$ sense.

\noindent Each $\bW_{j,k}^{(q)}$ is a multiple integral and consequently so is $\overline{\mathbf{S}}_{n_j,j}$ in~(\ref{e:snjbold}). (Basic facts about Multiple integrals and Wiener chaos are recalled in Appendix~\ref{s:appendixA}).

In Proposition~4.2 of~\cite{clausel-roueff-taqqu-tudor-2013}, we gave the following explicit expression of the Wiener chaos expansion of the scalogram.
\begin{proposition}\label{pro:decompSnjLD:paper3}
For all $j$, $\{\bW_{j,k}\}_{k\in\mathbb{Z}}$ is a weakly
stationary sequence. Moreover, for any $j\in\mathbb{N}$,
$\overline{\mathbf{S}}_{n_j,j}$ can be expanded into Wiener chaos as follows
\begin{eqnarray}\label{e:decompSnjLD}
\nonumber\overline{\mathbf{S}}_{n_j,j}& =&
\frac{1}{n_j}\sum_{k=0}^{n_j-1}\bW_{j,k}^2-\mathbb{E}[\bW_{j,0}^2]\\
&=&\sum_{q=1}^{\infty}\left(\frac{c_q}{q!}\right)^2\;\;\sum_{p=0}^{q-1}p!
{{q}\choose{p}}^2\;\mathbf{S}_{n_j,j}^{(q,q,p)}+2\sum_{q'=2}^\infty\sum_{q=1}^{q'-1}\frac{c_q}{q!}
\frac{c_{q'}}{q'!} \sum_{p=0}^{q} \; p!\;
{{q}\choose{p}}{{q'}\choose{p}} \; \mathbf{S}_{n_j,j}^{(q,q',p)}\;,
\end{eqnarray}
where, for all $q,q'\geq 1$ and $0\leq p\leq \min(q,q')$, $\bS_{n_j,j}^{(q,q',p)}$ is of the form
\begin{equation}\label{e:Snjrrp1}
\bS_{n_j,j}^{(q,q',p)}=
\widehat{I}_{q+q'-2p}\left(\mathbf{g}_{n_j,j}^{(q,q',p)}\right)\;,
\end{equation}
and where the infinite sums converge in the $L^1(\Omega)$ sense.
The function $\mathbf{g}_{n_j,j}^{(q,q',p)}(\xi)$,
  $\xi=(\xi_1,\dots,\xi_{q+q'-2p})\in\mathbb{R}^{q+q'-2p}$,
  in~(\ref{e:Snjrrp1}) is defined as follows~:
\begin{equation}\label{e:bg}
\begin{array}{lll}\mathbf{g}_{n_j,j}^{(q,q',p)}(\xi)&=& D_{n_j}(\gamma_j\{\xi_1+\dots+\xi_{q+q'-2p}\})\times \prod_{i=1}^{q+q'-2p}[\sqrt{f(\xi_i)}\1_{(-\pi,\pi)}(\xi_i)]\\
&&\times\;\mathbf{\widehat{\kappa}}_j^{(p)}(\xi_1+\dots+\xi_{q-p},\xi_{q-p+1}+\dots+\xi_{q+q'-2p})
\;,
\end{array}
\end{equation}
where $f$ denotes the spectral density of the underlying Gaussian
process $X$ and for any integer $n$,
\begin{equation}\label{eq:dirichlet}
D_{n}(u)=\frac{1}{n_j}\sum_{k=0}^{n_j-1}\rme^{\rmi k u}=
\frac{1-\rme^{\rmi n_j u}}{n_j(1-\rme^{\rmi  u})}\;,
\end{equation}
denotes the normalized Dirichlet kernel, and for
$\xi_1,\xi_2\in\mathbb{R}$, if $p\neq0$,
\begin{equation}\label{e:intrepKjp1}
{\widehat{\bkappa}}_{j}^{(p)}(\xi_1,\xi_2)=\int_{(-\pi,\pi)^{p}}\left(\prod_{i=1}^{p}f(\lambda_i)\right)\;
\mathbf{\widehat{h}}_j^{(K)}(\lambda_1+\dots+\lambda_p+\xi_1)\overline{\mathbf{\widehat{h}}_{j}^{(K)}
(\lambda_1+\dots+\lambda_p-\xi_2)}\;\rmd^p\lambda\;,
\end{equation}
and, if $p=0$,
\begin{equation}\label{e:intrepKjp1bis}
{\widehat{\bkappa}}_{j}^{(p)}(\xi_1,\xi_2)=
\mathbf{\widehat{h}}_j^{(K)}(\xi_1)\overline{\mathbf{\widehat{h}}_j^{(K)}(\xi_2)}\;.
\end{equation}
\end{proposition}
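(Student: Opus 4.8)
The plan is to obtain the decomposition in three stages: an explicit multiple Wiener--Itô integral representation of each component $\bW_{j,k}^{(q)}$, then the multiplication formula for multiple integrals applied to every product appearing in the expansion of $\bW_{j,k}^2$, and finally the identification of the contracted kernels together with the passage to the normalized sum over $k$.

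First I would start from the spectral representation of the underlying Gaussian sequence, $X_t=\int_{-\pi}^\pi\rme^{\rmi t\lambda}\sqrt{f(\lambda)}\,\rmd\widehat{W}(\lambda)$, and use the Wiener--Itô representation of the Hermite polynomials to express $H_q(X_t)$ as a multiple integral of order $q$ whose kernel carries the factor $\prod_{i=1}^q[\sqrt{f(\xi_i)}\1_{(-\pi,\pi)}(\xi_i)]$ and the phase $\rme^{\rmi t(\xi_1+\dots+\xi_q)}$. Applying the decimated linear filter $\bh_j$ to $Y=\Delta^{-K}G(X)$ and exploiting the $M\geq K$ vanishing moments of the wavelet to absorb $\Delta^{-K}$ into a modified frequency response $\widehat{\bh}_j^{(K)}$ — which is where the factor $|1-\rme^{-\rmi\lambda}|^{-K}$ gets regularized so that square--integrability is preserved — I would arrive at
\[
\bW_{j,k}^{(q)}=\widehat{I}_q\Big((\xi_1,\dots,\xi_q)\mapsto \rme^{\rmi\gamma_j k(\xi_1+\dots+\xi_q)}\,\widehat{\bh}_j^{(K)}(\xi_1+\dots+\xi_q)\prod_{i=1}^q[\sqrt{f(\xi_i)}\1_{(-\pi,\pi)}(\xi_i)]\Big)\;.
\]
Weak stationarity of $\{\bW_{j,k}\}_{k}$ is then immediate, since the only dependence on $k$ is through the phase $\rme^{\rmi\gamma_j k(\cdots)}$, so that the covariance of $\bW_{j,k}$ and $\bW_{j,k'}$ depends only on $k-k'$.

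Next I would expand $\bW_{j,k}^2$ as in~(\ref{e:sumOFprodwjk}) and apply the multiplication formula to each product $\bW_{j,k}^{(q)}\bW_{j,k}^{(q')}$, producing a sum over $p=0,\dots,\min(q,q')$ of integrals of order $q+q'-2p$ weighted by the combinatorial factors $p!\binom{q}{p}\binom{q'}{p}$ that appear in~(\ref{e:decompSnjLD}). Carrying out the $p$ contractions pairs $p$ of the frequency variables across the two kernels; integrating out these paired variables over $(-\pi,\pi)^p$ produces exactly the functions $\widehat{\bkappa}_j^{(p)}$ of~(\ref{e:intrepKjp1}) (the degenerate case $p=0$ giving~(\ref{e:intrepKjp1bis})), while the remaining $\sqrt{f}$ factors survive as the product in~(\ref{e:bg}). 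Summing over $k=0,\dots,n_j-1$ and normalizing by $1/n_j$ acts only on the phase and reconstructs the normalized Dirichlet kernel $D_{n_j}(\gamma_j\{\xi_1+\dots+\xi_{q+q'-2p}\})$ of~(\ref{eq:dirichlet}); subtracting $\mathbb{E}[\bW_{j,0}^2]$ removes exactly the order--zero terms, i.e.\ the full contraction $p=q$ when $q=q'$, which is why the diagonal inner sum runs only up to $p=q-1$. This identifies $\bS_{n_j,j}^{(q,q',p)}=\widehat{I}_{q+q'-2p}(\mathbf{g}_{n_j,j}^{(q,q',p)})$ with $\mathbf{g}_{n_j,j}^{(q,q',p)}$ given by~(\ref{e:bg}).

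I expect the main obstacle to be the bookkeeping in this contraction step: one must verify that the $p$--fold contraction of the two product kernels, after integration of the paired variables, factorizes into the single function $\widehat{\bkappa}_j^{(p)}$ of the two grouped arguments $\xi_1+\dots+\xi_{q-p}$ and $\xi_{q-p+1}+\dots+\xi_{q+q'-2p}$, and that the symmetrization implicit in the multiplication formula does not change the resulting integral. Finally I would justify the $L^1(\Omega)$ convergence of the double series by bounding the $L^2$ norms of the individual integrals $\bS_{n_j,j}^{(q,q',p)}$ through the isometry property and the integrability of $\widehat{\bkappa}_j^{(p)}$ afforded by the vanishing--moment assumption, and checking that these bounds, multiplied by the coefficients $(c_q/q!)(c_{q'}/q'!)$ controlled by~(\ref{e:condcv}) and~(\ref{e:summability}), make the sums absolutely summable.
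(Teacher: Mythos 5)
Your proposal is correct and follows essentially the same route as the original argument: the paper does not reprove this proposition but imports it from Proposition~4.2 of \cite{clausel-roueff-taqqu-tudor-2013}, whose proof is exactly the one you outline (Wiener--It\^o representation of $\bW_{j,k}^{(q)}$ via the spectral measure, the product formula for multiple integrals producing the contractions $\widehat{\bkappa}_j^{(p)}$ with the weights $p!\binom{q}{p}\binom{q'}{p}$, the Dirichlet kernel arising from the normalized sum over $k$, and $L^1$ summability from the isometry bounds together with the decay of the $c_q$). No gaps to report.
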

The random summand $\mathbf{S}_{n_j,j}^{(q,q',p)}$ is expressed in~(\ref{e:Snjrrp1}) as a Wiener--It\^o integral of order $q+q'-2p$ and $q+q'-2p$ will be called the {\it order} of
$\mathbf{S}_{n_j,j}^{(q,q',p)}$.

The limits involved in Theorem~\ref{th:main:paper2} are those given by the term
$\bS_{n_j,j}^{(q_0,q_0,q_0-1)}$ as proved in Propositions~5.3 and 5.4 of~\cite{clausel-roueff-taqqu-tudor-2013}. A sufficient condition to get the
reduction principle is that the other terms are negligible with respect to this
term. Theorem~\ref{th:main:paper4} is then a direct consequence of the
following main result~:
\begin{theorem}\label{th:main2:paper4}
  Suppose that Assumptions \textbf{A} hold with $M\geq K+\delta(q_0)$, where
  $\delta(\cdot)$ is defined in~(\ref{e:ldparamq}) and that~(\ref{eq:d-cond})
  holds.  Define the centered multivariate scalogram $\overline{\bS}_{n,j}$
  related to $Y$ by~(\ref{e:snjm}).  Suppose that $(\gamma_j)$ and $(n_j)$ are
  any diverging sequences of integers. Then
  Condition~(\ref{eq:reduction-condition}) implies, as $j\to\infty$,
  \begin{equation}\label{eq:dominating-reduction-principle}
\left\| \overline{\mathbf{S}}_{n_j,j} - \bS_{n_j,j}^{(q_0,q_0,q_0-1)}\right\|_2
\ll \|\bS_{n_j,j}^{(q_0,q_0,q_0-1)}\|_2\;.
    %\quad\text{for all $(q,q',p)\in \mathcal A_0$}\;,
  \end{equation}
% Moreover, Condition~(\ref{eq:reduction-condition}) also implies that
%   \begin{equation}
%     \label{eq:dominating-reduction-principle-sum}
%  \overline{\mathbf{S}}_{n_j,j} = \bS_{n_j,j}^{(q_0,q_0,q_0-1)}+o_P\left(\left\|\bS_{n_j,j}^{(q_0,q_0,q_0-1)}\right\|_2\right)   \;.
% \end{equation}
\end{theorem}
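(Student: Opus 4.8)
The plan is to start from the Wiener chaos decomposition of $\overline{\mathbf{S}}_{n_j,j}$ provided in Proposition~\ref{pro:decompSnjLD:paper3}, namely~(\ref{e:decompSnjLD}), which expresses the centered scalogram as an (infinite) linear combination of the elementary terms $\mathbf{S}_{n_j,j}^{(q,q',p)}=\widehat{I}_{q+q'-2p}(\mathbf{g}_{n_j,j}^{(q,q',p)})$. The difference $\overline{\mathbf{S}}_{n_j,j}-\mathbf{S}_{n_j,j}^{(q_0,q_0,q_0-1)}$ is then exactly the same combination with the single index $(q,q',p)=(q_0,q_0,q_0-1)$ deleted, so that~(\ref{eq:dominating-reduction-principle}) amounts to showing that the $L^2$ norm of this remainder is of strictly smaller order than $\|\mathbf{S}_{n_j,j}^{(q_0,q_0,q_0-1)}\|_2$. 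Since multiple integrals of distinct orders $q+q'-2p$ lie in orthogonal Wiener chaoses, I would first split the remainder according to its chaos order, bound the norm of each order-component by the triangle inequality over the pairs $(q,q')$ contributing to it, and thereby reduce the whole statement to comparing each individual norm $\|\mathbf{S}_{n_j,j}^{(q,q',p)}\|_2$ against the norm of the dominating term. Note that the dominating term itself has order $q_0+q_0-2(q_0-1)=2$ in every case.

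The heart of the argument is a sharp $L^2$ bound for each elementary term, of the form $\|\mathbf{S}_{n_j,j}^{(q,q',p)}\|_2=O(\gamma_j^{a(q,q',p)}\,n_j^{b(q,q',p)})$ with explicit exponents $a,b$ depending on $(q,q',p)$, on $d$ and on $K$. Since $\mathbf{S}_{n_j,j}^{(q,q',p)}$ is a multiple Wiener--It\^o integral, its $L^2$ norm is controlled (up to the combinatorial factors coming from symmetrization) by the $L^2$ norm of its kernel $\mathbf{g}_{n_j,j}^{(q,q',p)}$ in~(\ref{e:bg}). I would estimate this kernel norm by exploiting the three scales present in~(\ref{e:bg}): the normalized Dirichlet kernel $D_{n_j}$ in~(\ref{eq:dirichlet}), which concentrates near $\xi_1+\dots+\xi_{q+q'-2p}=0$ and carries the dependence on $n_j$; the long--range dependence factors $\prod_i\sqrt{f(\xi_i)}\sim\prod_i|\xi_i|^{-d}$ near the origin; and the wavelet factors $\widehat{\bkappa}_j^{(p)}$ in~(\ref{e:intrepKjp1})--(\ref{e:intrepKjp1bis}), built from the scale-$\gamma_j$ filters $\widehat{\mathbf{h}}_j^{(K)}$, whose frequency localization and $M$ vanishing moments produce the powers of $\gamma_j$ while keeping the integrals convergent. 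Separating the ``long-range'' frequency regions (where several $\xi_i$ are small) from the ``short-range'' ones and collecting the contributions yields the exponents $a,b$; this is precisely the content of the technical lemmas of Section~\ref{s:lemmas}.

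With these bounds in hand, I would recall from Theorem~\ref{th:main:paper2} the exact order of the dominating term, namely $\|\mathbf{S}_{n_j,j}^{(q_0,q_0,q_0-1)}\|_2\asymp n_j^{-1/2}\gamma_j^{2(d+K)}$ when $q_0=1$ and $\asymp n_j^{-(1-2d)}\gamma_j^{2(\delta(q_0)+K)}$ when $q_0\geq2$, and then compare. For each competing term the ratio $\|\mathbf{S}_{n_j,j}^{(q,q',p)}\|_2/\|\mathbf{S}_{n_j,j}^{(q_0,q_0,q_0-1)}\|_2$ is a pure power $\gamma_j^{\alpha}n_j^{\beta}$ which tends to $0$ exactly when $n_j$ is smaller than a threshold power of $\gamma_j$; taking the infimum of these thresholds over all competing terms is by design the critical exponent $\nu_c$ of Definition~\ref{def:nuc}, so that~(\ref{eq:reduction-condition}) forces every ratio to vanish. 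The main obstacle, and the reason $\nu_c$ carries so many cases, is to identify the \emph{most competitive} terms: these are the order-$1$ and order-$2$ cross terms built from nearly consecutive non-vanishing Hermite indices (encoded by $I_0$, the sets $\Irset$, and the indices $\ell_0,\ell_r$), together with the higher diagonal terms $(q,q,q-1)$, and one must verify case by case---according to whether $q_0=1$ or $q_0\geq2$ and to the position of $d$ relative to the thresholds $1/2-1/(2q)$---that no other term has a smaller threshold than the defining minimum in $\nu_c$. Finally, to pass from these finitely many dominant comparisons to the full infinite combination~(\ref{e:decompSnjLD}), I would invoke the decay condition~(\ref{e:condcv}) on the coefficients $c_q$ in Assumption~\textbf{A}\ref{item:LMass}, which renders the tail over large $q,q'$ uniformly summable and hence negligible; condition~(\ref{eq:d-cond}) serves only to exclude the boundary values of $d$ where the kernel estimates acquire logarithmic corrections, as noted in Remark~\ref{rem:log}.
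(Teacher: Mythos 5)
Your proposal is correct and follows essentially the same route as the paper: the chaos decomposition~(\ref{e:decompSnjLD}), term-by-term $L^2$ bounds of the form $\gamma_j^{a}n_j^{b}$ (these are exactly Proposition~\ref{pro:upperB}, imported from the companion paper, while the lemmas of Section~\ref{s:lemmas} handle the monotonicity of the exponents), comparison with the sharp rate of $\bS_{n_j,j}^{(q_0,q_0,q_0-1)}$, a case-by-case optimization over $(q,q',p)$ whose worst cases produce precisely $\nu_c$, and summability of the tail via~(\ref{e:condcv}). The only slight imprecision is describing the competitive cross terms as being of order $1$ and $2$; the terms indexed by $r\in\Irset$ have chaos order $r+1$, which can exceed $2$, but since you invoke $\Irset$ and $\ell_r$ explicitly this does not affect the argument.
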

\begin{proof}
Theorem~\ref{th:main2:paper4} is proved in Section \ref{s:proof:th}.
\end{proof}
\noindent\emph{Idea behind the proof of Theorem~\ref{th:main2:paper4}. }One uses the expansion~(\ref{e:decompSnjLD}). The norms of the relevant terms are bounded in Proposition~\ref{pro:upperB}. We then deduce bounds for the difference $\|\overline{\mathbf{S}}_{n_j,j} -\bS_{n_j,j}^{(q_0,q_0,q_0-1)}\|_2$ in Proposition~\ref{prop:sharp-rates-mainterm}. The main task in the proof of Theorem~\ref{th:main2:paper4} is to show that these bounds are negligible compared to the leading term $\|\bS_{n_j,j}^{(q_0,q_0,q_0-1)}\|_2$ whose asymptotic behavior is also given in Proposition~\ref{prop:sharp-rates-mainterm}.

\medskip
Our results are based on $L^2(\Omega)$ upper bounds of the terms
$\|\bS_{n_j,j}^{(q,q',p)}\|_2$ established in Proposition~5.1
of~\cite{clausel-roueff-taqqu-tudor-2013}. To recall this result, we introduce
some notations.

For any $s\in\mathbb{Z}_+$ and $d\in (0,1/2)$, set
\begin{equation}\label{e:KC}
\Lambda_s(a)= \prod_{i=1}^{s}(a_i!)^{1-2d},\quad\forall
a=(a_{1},\cdots,a_{s})\in\mathbb{N}^s\;.
\end{equation}
For any $q,q',p\geq 0$, define $\alpha,\beta$ and $\beta'$ as follows~:
\begin{align}\label{e:alpha}
\alpha(q,q',p)&=
\begin{cases}
\min\left(1-\delta_+(q-p)-\delta_+(q'-p),1/2\right)
&\text{if }p\neq 0\;,\\
\frac{1}{2}&\text{if }p=0\;,
\end{cases}\\
\label{e:beta}
\beta(q,p)&=\max\left(\delta_+(p)+\delta_+(q-p)-1/2,0\right)\;,\\
\label{e:betap}
\beta'(q,q',p)&=\max\left(2\delta_+(p)+\delta_+(q-p)+\delta_+(q'-p)-1,-1/2\right)\;.
\end{align}
Notice that for any $q\geq 0$, $\beta(q,0)=\delta_+(q)$ and that, by definition
of $\beta,\beta'$, we have, for all $0\leq p\leq q\leq q'$, we have
\begin{equation}  \label{eq:betaP-beta-beta}
\beta'(q,q',p)\leq \beta(q,p)+\beta(q',p)\;.
\end{equation}
 Define
the function $\varepsilon$ on $\mathbb{Z}_+$ as
\begin{equation}
\label{e:varepsilontilde}
\varepsilon(p) =
\begin{cases}
0 &\text{if for any }s\in\{1,\cdots,p\},\,s(1-2d)\neq 1\;,\\
1&\text{if for some
}s\in\{1,\cdots,p\},\,s(1-2d)=1\;.
\end{cases}
\end{equation}
We first recall Proposition~5.1 of \cite{clausel-roueff-taqqu-tudor-2013} where
Part~\ref{item:pro:UBi} corresponds to $p\geq1$ and
Part~\ref{item:pro:UBii} to $p=0$.
\begin{proposition}\label{pro:upperB}
Suppose that Assumptions~A hold.
\begin{enumerate}[label=(\roman*)]
\item\label{item:pro:UBi}
There exists $C>0$ such that for for all $n,\gamma_j\geq2$ and $1\leq q \leq
q'$ and $1\leq p\leq \min(q,q'-1)$,
\begin{equation}\label{e:UB1a}
\begin{array}{lll}
\|\mathbf{S}_{n,j}^{(q,q',p)}\|_{2}&\leq&
C^{\frac{q+q'}{2}}\Lambda_2(q-p,p)^{1/2}\Lambda_2(q'-p,p)^{1/2}\gamma_j^{2K}\\
&&\times [n_j^{-\alpha(q,q',p)}\gamma_j^{\beta'(q,q',p)}+n_j^{-1/2}\gamma_j^{\beta(q,p)+\beta(q',p)}]\\
&&\times\left(\log n_j\right)^{\varepsilon(q+q'-2p)}(\log\gamma_j)^{3\varepsilon(q')}\;.
\end{array}
\end{equation}
\item\label{item:pro:UBii}
Assume that $M\geq K+\max(\delta_+(q),\delta_+(q'))$. Then
there exists some $C>0$ such that
for all $n,\gamma_j\geq2$ and $1\leq q \leq q'$,
\begin{equation}\label{e:UB1b}
\|\mathbf{S}_{n_j,j}^{(q,q',0)}\|_{2}\leq
C^{\frac{q+q'}{2}}\Lambda_1(q)^{1/2}\Lambda_1(q')^{1/2}
n_j^{-1/2}\gamma_j^{2K+\delta_+(q)+\delta_+(q')}
\left(\log\gamma_j\right)^{\varepsilon(q')}\;.
\end{equation}
\end{enumerate}
\end{proposition}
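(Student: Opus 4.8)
The plan is to prove the two bounds directly from the Wiener--It\^o integral representation~(\ref{e:Snjrrp1}), exploiting the isometry property of multiple integrals recalled in Appendix~\ref{s:appendixA}. For each component $\ell\in\{1,\dots,m\}$, the summand $S_{\ell,n_j,j}^{(q,q',p)}=\widehat{I}_{q+q'-2p}(g_{\ell,n_j,j}^{(q,q',p)})$ is a multiple integral of order $N:=q+q'-2p$, whose kernel $g_\ell$ is the $\ell$-th component of~(\ref{e:bg}). By the isometry, $\mathbb{E}[|S_\ell|^2]\leq N!\,\|g_{\ell,n_j,j}^{(q,q',p)}\|_{L^2(\mathbb{R}^N)}^2$, where we have bounded the symmetrized kernel by the kernel itself. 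Summing over $\ell$ reduces the whole task to estimating
\[
\|\mathbf{g}_{n_j,j}^{(q,q',p)}\|_{L^2}^2=\int_{(-\pi,\pi)^N}|D_{n_j}(\gamma_j\{\xi_1+\dots+\xi_N\})|^2\prod_{i=1}^N f(\xi_i)\;|\widehat{\bkappa}_j^{(p)}(\xi_1+\dots+\xi_{q-p},\xi_{q-p+1}+\dots+\xi_N)|^2\,\rmd\xi\;,
\]
and tracking the factorial-in-$N$ constants — the $N!$ from the isometry combining with those produced by the convolution estimates below — so that the net growth is no worse than the stated $C^{(q+q')/2}\Lambda_2(\cdots)$ factors. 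This last point is essential because the bound must hold for all $q,q'$ simultaneously in order to sum the chaos expansion~(\ref{e:decompSnjLD}) later.

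Two analytic ingredients drive the estimate. First, the normalized Dirichlet kernel in~(\ref{eq:dirichlet}) satisfies $|D_{n_j}(u)|^2=n_j^{-1}F_{n_j}(u)$, where $F_{n_j}$ is the Fej\'er kernel with $\int_{-\pi}^\pi F_{n_j}=2\pi$; hence $|D_{n_j}(\gamma_j\cdot)|^2$ concentrates, as $n_j\to\infty$, near the lattice $\tfrac{2\pi}{\gamma_j}\mathbb{Z}$ and carries the $n_j$-decay. Second, because $f(\lambda)$ behaves like $|\lambda|^{-2d}$ near the origin by~(\ref{e:sdf}), integrating a product of $s$ such singular factors subject to a fixed-sum constraint produces a residual singularity whose exponent is governed by $\delta(s)$, together with a constant growing only factorially; this is exactly the source of the exponents $\delta_+(q-p),\delta_+(q'-p)$ appearing in $\alpha,\beta,\beta'$ and of the factors $\Lambda_2$. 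The wavelet filters enter through $\widehat{\bkappa}_j^{(p)}$: after rescaling frequencies by $\gamma_j$ and using that $\widehat{\bh}_j^{(K)}$ behaves like $|\cdot|^{-K}$ at the origin while decaying fast away from it (thanks to the $M\geq K+\delta(q_0)$ vanishing moments of Assumption~\textbf{A}~\ref{item:wave}), one extracts the scale factors $\gamma_j^{2K}$, $\gamma_j^{\beta(q,p)+\beta(q',p)}$ and $\gamma_j^{\beta'(q,q',p)}$. The logarithmic corrections encoded by $\varepsilon$ in~(\ref{e:varepsilontilde}) appear exactly at the boundary values $s(1-2d)=1$, where the power-law convolution degenerates into a logarithm.

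The argument then splits according to whether $p=0$ or $p\geq1$. When $p=0$, the kernel~(\ref{e:intrepKjp1bis}) factorizes as a tensor product $\widehat{\bh}_j^{(K)}(\xi_1)\overline{\widehat{\bh}_j^{(K)}(\xi_2)}$, so the only coupling among the $N=q+q'$ variables comes through the Dirichlet kernel acting on the total sum. Bounding $|D_{n_j}|^2$ crudely by its integral $O(1/n_j)$ then yields the single-term estimate~(\ref{e:UB1b}) with the $n_j^{-1/2}$ rate and $\gamma_j^{2K+\delta_+(q)+\delta_+(q')}$; the hypothesis $M\geq K+\max(\delta_+(q),\delta_+(q'))$ is what guarantees integrability at the high-frequency end of the filter. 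When $p\geq1$, the $p$-fold convolution in~(\ref{e:intrepKjp1}) genuinely couples the two groups of variables, and estimating the Dirichlet kernel in the global sum variable produces two competing regimes: localizing $|D_{n_j}|^2$ against the combined singularity $\delta_+(q-p)+\delta_+(q'-p)$ gives the term $n_j^{-\alpha(q,q',p)}\gamma_j^{\beta'(q,q',p)}$, whereas the crude $O(1/n_j)$ bound gives $n_j^{-1/2}\gamma_j^{\beta(q,p)+\beta(q',p)}$, and one keeps their sum, as in~(\ref{e:UB1a}).

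I expect the main obstacle to be the simultaneous, quantitative control of all three mechanisms inside a single multidimensional integral while keeping the constants factorially sharp. Concretely, the delicate core is the convolution lemma for products of $|\cdot|^{-2d}$ singularities with fixed-sum constraints: one must obtain the residual exponent $\delta(\cdot)$ with a constant no larger than $C^{s}\prod(a_i!)^{1-2d}$, and correctly detect the threshold cases $s(1-2d)=1$ producing the $\log$ factors. Getting the exponent $\alpha(q,q',p)=\min(1-\delta_+(q-p)-\delta_+(q'-p),1/2)$ right, and proving that the two regimes for $p\geq1$ indeed dominate all other ways of distributing the Dirichlet decay, is where the technical effort concentrates. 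Since this statement is Proposition~5.1 of~\cite{clausel-roueff-taqqu-tudor-2013}, in the present paper it is invoked rather than reproved.
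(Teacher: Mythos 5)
The paper itself gives no proof of this proposition: it is recalled verbatim as Proposition~5.1 of \cite{clausel-roueff-taqqu-tudor-2013}, so your closing remark that the statement is invoked rather than reproved is exactly the paper's treatment, and your answer agrees with it. Your accompanying sketch (chaos isometry reducing matters to the $L^2$ norm of the kernels in~(\ref{e:bg}), the Fej\'er identity $|D_n|^2=n^{-1}F_n$ carrying the $n_j$-decay, factorially sharp convolution bounds for the $|\cdot|^{-2d}$ singularities producing the $\delta$-exponents, the $\Lambda$-factors and the boundary logarithms at $s(1-2d)=1$, and the $p=0$ versus $p\geq1$ dichotomy with the two competing Dirichlet regimes) is a faithful outline of the cited proof's mechanism, with one small correction: the hypothesis $M\geq K+\max(\delta_+(q),\delta_+(q'))$ controls $\widehat{\bh}_j^{(K)}$ near the \emph{zero}-frequency singularity, where the $M$ vanishing moments of~\ref{ass:w-bp} must compensate the factors $|1-\rme^{-\rmi\lambda}|^{-K}$ and $|\lambda|^{-2\delta_+}$, while the high-frequency end is handled by the decay exponent $\alpha>1$ in~(\ref{e:majoHj}), not by $M$.
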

Note that under Condition~(\ref{eq:d-cond}) we have $\varepsilon(p)=0$ for all
$p\geq1$ in~(\ref{e:varepsilontilde}). Thus the logarithmic terms vanish
in~(\ref{e:UB1a}) and~(\ref{e:UB1b}). Moreover, if $p=0$ then $\Lambda_2(q,0)=\Lambda_1(q)$, $\alpha(q,q',0)=1/2$, $\beta(q,0)=\delta_+(q)$ and
$\beta'(q,q',0)=\delta_+(q)+\delta_+(q')$. Therefore, if
Condition~(\ref{eq:d-cond}) holds, the bounds~(\ref{e:UB1a})
and~(\ref{e:UB1b}) imply the following common bound
\begin{multline}\label{e:UB1aANDb}
\|\mathbf{S}_{n,j}^{(q,q',p)}\|_{2}\leq
C^{\frac{q+q'}{2}}\Lambda_2(q-p,p)^{1/2}\Lambda_2(q'-p,p)^{1/2}\gamma_j^{2K}\\
\times [n_j^{-\alpha(q,q',p)}\gamma_j^{\beta'(q,q',p)}+n_j^{-1/2}\gamma_j^{\beta(q,p)+\beta(q',p)}]\;.
\end{multline}
Consider now the decomposition
$$
 \overline{\mathbf{S}}_{n_j,j} =
\bS_{n_j,j}^{(q_0,q_0,q_0-1)} + \left( \overline{\mathbf{S}}_{n_j,j} -
  \bS_{n_j,j}^{(q_0,q_0,q_0-1)}\right) \;.
$$
The following result provides the sharp rate of the first term and a bound on
the second one, relying on Wiener chaos decomposition~(\ref{e:decompSnjLD}).
\begin{proposition}\label{prop:sharp-rates-mainterm}
Assume that Assumptions~\textbf{(A)} hold with  $M\geq K+\delta_+(q_0)$ and
suppose that Condition~(\ref{eq:d-cond}) holds. Let
$(n_j)$ and $(\gamma_j)$ be any diverging sequences.
Then, there exists a positive constant $C$ such that, for all $j\geq1$,
\begin{equation}
  \label{eq:bound-reduction-remainder}
  \left\|\overline{\mathbf{S}}_{n_j,j} -
  \bS_{n_j,j}^{(q_0,q_0,q_0-1)}\right\|_2 \leq
C\; \gamma_j^{2K}\sup_{(q,q',p)\in \mathcal A_0}[n_j^{-\alpha(q,q',p)}\gamma_j^{\beta'(q,q',p)}+n_j^{-1/2}\gamma_j^{\beta(q,p)+\beta(q',p)}]\;,
\end{equation}
where we denote
\begin{equation}\label{e:def:A0}
\mathcal A_0 = \{(q,q',p)~:~1\leq q\leq q',\,0\leq p\leq
\min(q,q'-1),\,c_q\times c_{q'}\neq0\}\setminus\{(q_0,q_0,q_0-1)\}\;.
\end{equation}
Moreover, the two following assertions hold~:
\begin{enumerate}[label=(\roman*)]
\item If  $q_0=1$, as $j\to\infty$,
\begin{equation}
  \label{eq:q0eq1-rate-leading}
  \|\bS_{n_j,j}^{(q_0,q_0,q_0-1)}\|_2=  \|\bS_{n_j,j}^{(1,1,0)}\|_2\sim C\;
n_j^{-1/2}\gamma_j^{2(d+K)}\;,
\end{equation}
where $C$ is a positive constant.
\item If $q_0\geq 2$, as $j\to\infty$,
\begin{equation}
  \label{eq:q0neq1-rate-leading}
  \|\bS_{n_j,j}^{(q_0,q_0,q_0-1)}\|_2 \sim C
n_j^{-1+2d}\gamma_j^{2(\delta(q_0)+K)}\;,
\end{equation}
where $C$ is a positive constant.
\end{enumerate}
\end{proposition}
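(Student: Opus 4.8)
The plan is to handle the three assertions separately, all starting from the Wiener chaos decomposition~(\ref{e:decompSnjLD}) combined with the common upper bound~(\ref{e:UB1aANDb}), which under Condition~(\ref{eq:d-cond}) holds with no logarithmic factor for every admissible triple $(q,q',p)$. First I would prove the uniform bound~(\ref{eq:bound-reduction-remainder}) on the remainder $\overline{\bS}_{n_j,j}-\bS_{n_j,j}^{(q_0,q_0,q_0-1)}$, and then establish the sharp equivalents~(\ref{eq:q0eq1-rate-leading}) and~(\ref{eq:q0neq1-rate-leading}) for the leading term, which turns out to live in a fixed (second) Wiener chaos.

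For the remainder bound, I would observe that $\bS_{n_j,j}^{(q_0,q_0,q_0-1)}$ is precisely the term indexed by $q=q'=q_0$ and $p=q_0-1$ in the first sum of~(\ref{e:decompSnjLD}). Subtracting it leaves a sum over $(q,q',p)\in\mathcal A_0$ of the summands $\bS_{n_j,j}^{(q,q',p)}$ weighted by the explicit combinatorial coefficients of~(\ref{e:decompSnjLD}). Applying the triangle inequality in $L^2(\Omega)$ and then~(\ref{e:UB1aANDb}) to each summand, I would factor out $\gamma_j^{2K}$ and bound the scale/sample factor $[n_j^{-\alpha(q,q',p)}\gamma_j^{\beta'(q,q',p)}+n_j^{-1/2}\gamma_j^{\beta(q,p)+\beta(q',p)}]$ by its supremum over $\mathcal A_0$; this supremum is finite and attained since $\delta_+(k)=0$ for all large $k$, so $\alpha,\beta,\beta'$ take only finitely many distinct values. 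It then remains to check that the residual weights
\[
\frac{c_q}{q!}\frac{c_{q'}}{q'!}\,p!\binom{q}{p}\binom{q'}{p}\,C^{(q+q')/2}\,\Lambda_2(q-p,p)^{1/2}\Lambda_2(q'-p,p)^{1/2}
\]
are summable over $(q,q',p)$. The mechanism is that the factorial growth of $\Lambda_2$ and of the combinatorial factors is absorbed by the decay~(\ref{e:condcv}) of the Hermite coefficients: writing $q!=p!\,(q-p)!\,\binom{q}{p}$ and using $c_q=O((q!)^d\rme^{-\lambda q})$, the diagonal weight collapses after cancellation to a quantity bounded, up to constants, by $(C\,2^{2d}\rme^{-2\lambda})^{q}$ uniformly in $p$; since $\lambda>0$ is arbitrary one chooses it large enough to make the geometric series converge, and the off-diagonal terms are treated identically. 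Multiplying the convergent sum of weights by the finite supremum of the scale/sample factor yields~(\ref{eq:bound-reduction-remainder}).

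For the sharp rates of the leading term, the key remark is that $\bS_{n_j,j}^{(q_0,q_0,q_0-1)}=\widehat{I}_2(\mathbf{g}_{n_j,j}^{(q_0,q_0,q_0-1)})$ lives in the \emph{second} Wiener chaos, since its order $q_0+q_0-2(q_0-1)=2$ is independent of $q_0$. By the isometry its squared $L^2$ norm is a multiple of $\|\mathbf{g}_{n_j,j}^{(q_0,q_0,q_0-1)}\|_{L^2}^2$, whose kernel is given explicitly in~(\ref{e:bg})--(\ref{e:intrepKjp1}). This is exactly the term whose normalized limit is identified in Theorem~\ref{th:main:paper2} (equivalently, Propositions~5.3 and~5.4 of \cite{clausel-roueff-taqqu-tudor-2013}): after renormalization by $n_j^{1/2}\gamma_j^{-2(d+K)}$ when $q_0=1$, and by $n_j^{1-2d}\gamma_j^{-2(\delta(q_0)+K)}$ when $q_0\geq 2$, it converges in law to a non-degenerate second-chaos limit, Gaussian respectively Rosenblatt. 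Since a sequence in a fixed Wiener chaos that converges in law also converges in $L^2$ — hypercontractivity forces all moments to be comparable, hence the squares are uniformly integrable — these renormalizing factors are precisely the $L^2$ rates, giving~(\ref{eq:q0eq1-rate-leading}) and~(\ref{eq:q0neq1-rate-leading}) with $C$ equal to the $L^2$ norm of the corresponding limit.

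The main obstacle is the remainder bound, specifically the summability of the weights: one must verify that the factorially growing combinatorial factors $p!\binom{q}{p}\binom{q'}{p}$ together with the $\Lambda_2$ factors are genuinely dominated by the decay~(\ref{e:condcv}) for a suitable $\lambda$, \emph{uniformly in} $p$. The secondary subtlety is justifying the replacement of the infinite supremum over $\mathcal A_0$ by a finite one, which rests on $\alpha,\beta,\beta'$ being determined only by the finitely many indices $k$ with $\delta_+(k)>0$.
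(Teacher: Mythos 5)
Your proof of the remainder bound~(\ref{eq:bound-reduction-remainder}) is exactly the paper's argument: Minkowski applied to the chaos decomposition~(\ref{e:decompSnjLD}), the common bound~(\ref{e:UB1aANDb}), and then summability of the weights. The paper simply cites Lemma~8.6 of \cite{clausel-roueff-taqqu-tudor-2013} for that summability; your direct computation (cancelling $p!\binom{q}{p}^2$ against $(q!)^{-2}$ and $\Lambda_2$, bounding $\binom{q}{p}^{2d}\le 2^{2dq}$, and absorbing everything into $\rme^{-2\lambda q}$ with $\lambda$ large) is precisely the content of that lemma and is correct.

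For the sharp rates (i)--(ii) you take a mildly different route. The paper extracts the $L^2$ asymptotics directly from the proofs of the weak convergence results (the kernel computations in \cite{moulines:roueff:taqqu:2007:jtsa} and \cite{clausel-roueff-taqqu-tudor-2011b}), whereas you upgrade the known convergence in law of the normalized second-chaos term to convergence of second moments via hypercontractivity and uniform integrability. That shortcut is legitimate, but two points need tightening. First, the statement that a sequence in a fixed chaos converging in law ``also converges in $L^2$'' is imprecise: what the argument gives is that the $L^2$ \emph{norms} converge to the $L^2$ norm of the limit (boundedness in $L^2$ must first be deduced from tightness plus the equivalence of $L^2$ and $L^4$ norms on a fixed chaos). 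Second, the conclusion $\|\bS_{n_j,j}^{(q_0,q_0,q_0-1)}\|_2\sim C\,(\text{rate})$ with $C>0$ requires the limit to be non-degenerate; you assert this but do not check it, while the paper does so explicitly for $q_0=1$ by arguing $\sum_\ell\Gamma_{\ell,\ell}^2>0$ from the continuity and non-triviality of $\widehat h_{\ell,\infty}$ (and for $q_0\ge2$ one needs $\K_{q_0-1}\neq0$). With those two points supplied, your proof is complete and equivalent to the paper's.
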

\begin{proof}
By Proposition~\ref{pro:decompSnjLD:paper3} and~(\ref{e:def:A0}), applying the Minkowski inequality,
we have
$$
\left\| \overline{\mathbf{S}}_{n_j,j} - \bS_{n_j,j}^{(q_0,q_0,q_0-1)}\right\|_2
\leq 2\sum_{(q,q',p)\in \mathcal A_0}\frac{|c_q|}{q!}
\frac{|c_{q'}|}{q'!}\; p!\;
{{q}\choose{p}}{{q'}\choose{p}} \|\bS_{n_j,j}^{(q,q',p)}\|_2\;.
$$
The bound~(\ref{e:UB1aANDb}) implies that
\begin{align*}
%\nonumber
&\sum_{(q,q',p)\in \mathcal A_0}\frac{|c_q|}{q!}
\frac{|c_{q'}|}{q'!} \; p!\;
{{q}\choose{p}}{{q'}\choose{p}} \|\bS_{n_j,j}^{(q,q',p)}\|_2\\
%\nonumber
&\leq \left(\sum_{(q,q',p)\in \mathcal A_0}\frac{|c_q|}{q!}
\frac{|c_{q'}|}{q'!}  \; p!\;
{{q}\choose{p}}{{q'}\choose{p}}C^{\frac{q+q'}{2}}\Lambda_2(q-p,p)^{1/2}\Lambda_2(q'-p,p)^{1/2}\right)\\
%  \label{eq:gen-upper-bound-q0is1}
&\times \gamma_j^{2K}\sup_{(q,q',p)\in \mathcal A_0}[n_j^{-\alpha(q,q',p)}\gamma_j^{\beta'(q,q',p)}+n_j^{-1/2}\gamma_j^{\beta(q,p)+\beta(q',p)}]\;.
\end{align*}
By Lemma~8.6 of \cite{clausel-roueff-taqqu-tudor-2013}, the last two displays yield~(\ref{eq:bound-reduction-remainder}).

We now prove~(\ref{eq:q0eq1-rate-leading}) and~(\ref{eq:q0neq1-rate-leading}).
First consider the case where $q_0=1$. This asymptotic
equivalence~(\ref{eq:q0eq1-rate-leading}) is related to the convergence~(\ref{e:q01}) and follows from its proof, see
e.g. \cite{moulines:roueff:taqqu:2007:jtsa}. Since $q_0=1$, we have $c_1\neq
0$. Moreover in Condition~\ref{ass:w-c} on the wavelet filters recalled in
Appendix~\ref{s:appendixB}, $\widehat{h}_{\ell,\infty}$ are functions that are
non-identically zero and which are continuous as locally uniform limits of
continuous functions. Therefore $\sum_\ell\Gamma_{\ell,\ell}^2>0$ and we
get~(\ref{eq:q0eq1-rate-leading}).

Now consider the case where $q_0\geq2$. The
bound~(\ref{eq:q0neq1-rate-leading}) is then related to
Theorem~\ref{th:main:paper4}(\ref{item:thm-case-rozen}) where the weak
convergence is stated and follows from its proof, see~\cite{clausel-roueff-taqqu-tudor-2011b}.
\end{proof}
\section{Proofs}\label{s:proofs}
\subsection{Proof of Theorem~\ref{thm:spec-density}}\label{s:proof:spec-density}
The generalized spectral density $f_{G,K}$ of $Y$ is related to the spectral density $f_G$ of $G(X)$ by~(\ref{e:proof:sd1}). By definition of $d_0$, the result shall then follow if we prove the existence of a bounded function $f_{G}^*$ such that
\begin{equation}\label{e:proof:sd2}
f_{G}(\lambda)=|1-\rme^{-\rmi \lambda}|^{-2 \delta(q_0)}\;f_{G}^*(\lambda)\;,
\end{equation}
and satisfying all the properties stated in Theorem~\ref{thm:spec-density}.

We now prove~(\ref{e:proof:sd2}). To this end, we consider the following
decomposition of $G(X)$ as the sum of two uncorrelated processes,
$$
G(X)=G_1(X)+G_2(X)=:\sum_{1\leq q<1/(1-2d)}\frac{c_q}{q!} H_q(X)+\sum_{q\geq1/(1-2d)}
\frac{c_q}{q!} H_q(X)\;.
$$
The proof of Proposition~6.2 in~\cite{clausel-roueff-taqqu-tudor-2011a}
shows that $G_2(X)$ admits a bounded spectral
density $f_{G_2}$. We first consider the case where $G_1$ reduces to the term $c_{q_0}H_{q_0}/q_0!$. Since the two processes $H_{q_0}(X)$ and $G_2(X)$ are uncorrelated, one has
\[
f_G(\lambda)=\frac{c_{q_0}^2}{q_0!}f_{H_{q_0}}(\lambda)+f_{G_2}(\lambda)\;.
\]
We can then set
\[
f^*_G(\lambda)=\frac{c_{q_0}^2}{q_0!}f_{H_{q_0}}^*(\lambda)+|1-\rme^{-\rmi\lambda}|^{2\delta(q_0)}f_{G_2}(\lambda)\;.
\]
Let us check $f^*_G$ has the properties stated in the theorem. Relation~(\ref{e:proof:sd2}) follows from the definition of $f_{G}^*$ and $f_{H_{q_0}}^*$. To prove the other properties stated in Theorem~\ref{thm:spec-density}, we distinguish the two cases $q_0=1$ and $q_0\geq 2$. If $q_0=1$, $f_{H_{q_0}}=f$, $f^*_{H_{q_0}}=f^*$ and then
$\zeta\leq \beta$, one has
\begin{equation}\label{eq:spec-density0}
|f_{H_{q}}^*(\lambda)-f_{H_{q}}^*(0)|\leq C|\lambda|^\zeta\;,
\end{equation}
for some $C>0$. If $q_0\geq 2$, Lemma~\ref{lem:spec-density} yields that there exists a bounded function $f_{H_{q_0}}^*$ such that
\begin{equation}\label{eq:spec-density1}
f_{H_{q}}(\lambda)=|1-\rme^{-\rmi\lambda}|^{-2\delta(q)}f_{H_{q}}^*(\lambda)
\end{equation}
Moreover for any $\zeta\in (0,2\delta(q))$ such that $\zeta\leq \beta$, one has
\begin{equation}\label{eq:spec-density2}
|f_{H_{q}}^*(\lambda)-f_{H_{q}}^*(0)|\leq C|\lambda|^\zeta\;,
\end{equation}
for some $C>0$. In any case, the boundedness of $f_{G_2}$ and the properties of $f^*_{H_{q_0}}$ (equation~(\ref{eq:spec-density0}) if $q_0=1$ or~(\ref{eq:spec-density1}),~(\ref{eq:spec-density2}) if $q_0\geq 2$) then imply that~(\ref{e:betatilde-def}) holds in the case $G_1=c_{q_0}H_{q_0}/q_0!$, that is if $\delta_+(q_1)=0$.

We now deal with the case where $H_{q_1}$ has also long memory, namely $\delta_+(q_1)>0$. Since the terms $H_q(X)$ for $q<1/(1-2d)$ are all pairwise uncorrelated, the spectral density of long-range dependent part $G_1(X)$ reads as follows
\[
f_{G_1}(\lambda)=\sum_{1\leq q<1/(1-2d)}\frac{c_{q}^2}{q!}f_{H_{q}}(\lambda)\;.
\]
We now apply Equation~(\ref{e:fstar-Hq}) of Lemma~\ref{lem:spec-density} successively to each $q<1/(1-2d)$. Hence
\[
f_{G_1}(\lambda)=|1-\rme^{-\rmi\lambda}|^{-2\delta(q_0)}\left[\sum_{1\leq q<1/(1-2d)}\frac{c_{q}^2}{q!}|1-\rme^{-\rmi\lambda}|^{2\delta(q_0)-2\delta(q)}f_{H_{q}}^*(\lambda)\right]\;.
\]
Since $f_G=f_{G_1}+f_{G_2}$, we then get~(\ref{e:proof:sd2}) with
\[
f^*_G(\lambda)=\left[\sum_{1\leq q<1/(1-2d)}\frac{c_{q}^2}{q!}|1-\rme^{-\rmi\lambda}|^{2\delta(q_0)-2\delta(q)}f_{H_{q}}^*(\lambda)\right]+|1-\rme^{-\rmi\lambda}|^{2\delta(q_0)}f_{G_2}(\lambda)\;.
\]
Since $|1-\rme^{-\rmi\lambda}|=0$ for $\lambda=0$, we have $f^*_G(0)=c_{q_0}^2 f_{H_{q_0}}^*(0)/q_0!$.
We now prove that under Condition~(\ref{eq:zeta-cond}) on $\zeta$, we get~(\ref{e:betatilde-def}). Indeed,
\begin{eqnarray*}
|f_{G}^*(\lambda)-f^*_G(0)|&\leq \frac{c_{q_0}^2}{q_0!}|f_{H_{q_0}}^*(\lambda)-f_{H_{q_0}}^*(0)|+\left[\sum_{q_1\leq q<1/(1-2d)}\frac{c_{q}^2}{q!}|1-\rme^{-\rmi\lambda}|^{2\delta(q_0)-2\delta(q)}f_{H_{q}}^*(\lambda)\right]\\
&+
|1-\rme^{-\rmi\lambda}|^{2\delta(q_0)}f_{G_2}(\lambda)\;.
\end{eqnarray*}
Using the boundedness of $f^*_{H_q}$ for any $q\geq q_1$, we deduce that for some $C>0$ and any $q\geq q_1$,
\begin{equation}\label{e:1}
|1-\rme^{-\rmi\lambda}|^{2\delta(q_0)-2\delta(q)}f_{H_{q}}^*(\lambda)\leq C|\lambda|^{2\delta(q_0)-2\delta(q_1)}\;,
\end{equation}
whereas by Lemma~\ref{lem:spec-density} applied with $q=q_0$, we deduce that for any $\zeta\in (0,2\delta(q_0))$ such that $\zeta\leq \beta$, one has
\begin{equation}\label{e:2}
|f_{H_{q_0}}^*(\lambda)-f_{H_{q_0}}^*(0)|\leq L|\lambda|^\zeta\;.
\end{equation}
We now combine~(\ref{e:1}) and~(\ref{e:2}) and deduce that for any $\zeta\in (0,2\delta(q_0))$ such that $\zeta\leq \min(\beta,2\delta(q_0)-2\delta(q_1))$, one has
\begin{equation}\label{e:3}
|f_{G}^*(\lambda)-f_{G}^*(0)|\leq L'|\lambda|^\zeta\;,
\end{equation}
for some $L'>0$.
\subsection{Proof of Theorem~\ref{thm:cons}}\label{s:proof:thm:cons}
 The bound~(\ref{eq:consistency-bound}) follows the same lines as the proof of
  Proposition~\ref{prop:sharp-rates-mainterm}. It is a consequence of
  Proposition~\ref{pro:decompSnjLD:paper3}, Proposition~\ref{pro:upperB},
  Lemma~8.6 of \cite{clausel-roueff-taqqu-tudor-2013} and of the following bounds~:
\begin{align*}
  \alpha(q,q',p)\geq 1/2 -d &\quad\text{and equality implies $q'=q+1$ and $p=q$} \;,\\
  \beta(q,p)\leq \delta(q_0)  &\quad\text{and equality implies $q=q_0$}  \;,\\
  \beta'(q,q',p)\leq 2\delta(q_0)  &\quad\text{and equality implies $q=q'=q_0$} \;.
\end{align*}
The two first bounds follow from Lemma~8.3 in~\cite{clausel-roueff-taqqu-tudor-2013}, and the last one
from~(\ref{eq:betaP-beta-beta}). The equality cases are used to get rid of the
logarithmic corrections appearing in~(\ref{e:UB1a}) and~(\ref{e:UB1b})
since $q'=q+1$ and $p=q$  imply $\varepsilon(q+q'-2p)=\varepsilon(1)=0$ and $q=q'=q_0$
implies $\varepsilon(q')=\varepsilon(q_0)=0$.
This concludes the proof.
\subsection{Proof of Proposition~\ref{pro:crit:expo}}\label{s:proof:pro:crit:expo}
We want to show we always have $\nu_c>0$.  By definition of $\delta$ and $\delta_+$ in~(\ref{e:ldparamq}), we have
$\delta(q)\leq\delta(1)=d$ for all $q\geq1$, and since $d>0$, $\delta_+(q)\leq
d$. With $d<1/2$, this implies that
$d+1/2-2\delta_+(q_{\ell_0})\geq 1/2-d>0$ and thus the third line of the definition of $\nu_c$
is positive. For the same reason, $1-2\delta_+(q_1-1)\geq 1-2d$ and  the fourth line of the definition of $\nu_c$
is positive. In addition, for any $r\geq0$, $2d+1/2-2\delta_+(q_{\ell_r})-\delta(r+1)\geq 1/2-d$ and
$\delta(r+1) < 1/2$ so that, for any $r\in\Irset$,
$$
\frac{2d+1/2-2\delta_+(q_{\ell_r})-\delta(r+1)}{\delta(r+1)}\geq 1-2d >0 \; ,
$$
which ensures that the quantities inside the min are uniformly lower-bounded by a positive value. Finally, for the last line, we separate the cases
$\delta_+(q_{\ell_0})=0$ and $\delta_+(q_{\ell_0})=\delta(q_{\ell_0})$.
In the first case, we have
$4(\delta(q_0)-\delta_+(q_{\ell_0}))=4\delta(q_0)=2-2q_0(1-2d)$ and so
$$
\frac{4(\delta(q_0)-\delta_+(q_{\ell_0}))}{1-2d}=2(1/(1-2d)-q_0)>0 \;,
$$
as a consequence of~(\ref{e:longmemorycondition}).
In the second case, we have
$4(\delta(q_0)-\delta_+(q_{\ell_0}))\geq4(\delta(q_0)-\delta(q_{\ell_0}))=2(q_{\ell_0}-q_0)(1-2d)$ and so
$$
\frac{4(\delta(q_0)-\delta_+(q_{\ell_0}))}{1-2d}\geq  2(q_{\ell_0}-q_0)\geq0 \;,
$$
which is non-negative by definition of $q_{\ell_0}$. Hence the last line
defining $\nu_c$ is at least one, hence is positive, which concludes the proof.
\subsection{Proof of Theorem~\ref{th:main2:paper4}}\label{s:proof:th}
By Proposition~\ref{prop:sharp-rates-mainterm}, it is sufficient to show that
the right-hand side of~(\ref{eq:bound-reduction-remainder}) is negligible with
respect to the right-hand side of~(\ref{eq:q0eq1-rate-leading}) if  $q_0=1$
or  to the right-hand side of~(\ref{eq:q0neq1-rate-leading}) if $q_0\geq2$, that is, respectively,
\begin{align}
  \label{eq:remains-q0isone}
& \lim_{j\to\infty}  n_j^{1/2}\gamma_j^{-2d}\sup_{(q,q',p)\in \mathcal
  A_0}[n_j^{-\alpha(q,q',p)}\gamma_j^{\beta'(q,q',p)}+n_j^{-1/2}\gamma_j^{\beta(q,p)+\beta(q',p)}] =0\;,\\
  \label{eq:remains-q0geq2}
&\lim_{j\to\infty}  n_j^{1-2d}\gamma_j^{-2\delta(q_0)}\sup_{(q,q',p)\in \mathcal
  A_0}[n_j^{-\alpha(q,q',p)}\gamma_j^{\beta'(q,q',p)}+n_j^{-1/2}\gamma_j^{\beta(q,p)+\beta(q',p)}] =0\;.
\end{align}

We now distinguish the two cases $q_0=1$, $q_0\geq 2$.
\subsubsection{Proof of Theorem~\ref{th:main2:paper4} in the case
  $q_0=1$}\label{s:proof:th:case1}

In this case, we need to show that Condition~(\ref{eq:reduction-condition})
implies~(\ref{eq:remains-q0isone}).

By Lemma~8.3 (4) in \cite{clausel-roueff-taqqu-tudor-2013}, we have, for all
$0\leq p\leq q\leq q'$,  $\beta(q,p)+\beta(q',p)\leq
\delta_+(q)+\delta_+(q')$. We may thus write
$$
\sup_{(q,q',p)\in \mathcal
  A_0}\gamma_j^{\beta(q,p)+\beta(q',p)}\leq\sup_{(q,q',p)\in \mathcal
  A_0}\gamma_j^{\delta_+(q)+\delta_+(q')}\leq \gamma_j^{\sup\left\{\delta^+(q)+\delta^+(q')~:~1\leq q\leq q',\,(q,q')\neq(1,1)\right\}}\;,
$$
since for $q_0=1$, the triplet $(q_0,q_0,q_0-1)=(1,1,0)$ is excluded from
$\mathcal A_0$.  Using Lemma~\ref{lem:peq0}, we obtain, as $j\to\infty$,
\begin{equation}
  \label{eq:term2-q0is1}
\sup_{(q,q',p)\in \mathcal A_0}n_j^{-1/2}\gamma_j^{\beta(q,p)+\beta(q',p)}=o\left(n_j^{-1/2}\gamma_j^{2d}\right)\;.
\end{equation}
Inserting this in~(\ref{eq:remains-q0isone}), we only need to show that~(\ref{eq:reduction-condition}) implies
\begin{equation}  \label{eq:term1-q0is1}
\lim_{j\to\infty}  n_j^{1/2}\gamma_j^{-2d}\sup_{(q,q',p)\in \mathcal
  A_0}n_j^{-\alpha(q,q',p)}\gamma_j^{\beta'(q,q',p)} =0\;.
\end{equation}
Observe that, by definition, $\alpha(q,q',p)\leq1/2$.
We shall therefore partition $\mathcal A_0$ into $\mathcal A_0=\mathcal A_1\cup\mathcal A_2$, where
\begin{align*}
\mathcal A_1&=\{(q,q',p)\in\mathcal A_0~:~\alpha(q,q',p)=1/2\}\\%\quad\text{and}\quad
\mathcal A_2&=\{(q,q',p)\in\mathcal A_0~:~\alpha(q,q',p)<1/2\}\;.
\end{align*}
Since $\alpha(q,q',p)=1/2$ for $(q,q',p)\in\mathcal A_1$, we get
with~(\ref{eq:betaP-beta-beta}) that
$$
\sup_{(q,q',p)\in \mathcal A_1}
  n_j^{-\alpha(q,q',p)}\gamma_j^{\beta'(q,q',p)}
\leq  \sup_{(q,q',p)\in \mathcal A_0}
  n_j^{-1/2}\gamma_j^{\beta(q,p)+\beta(q',p)}=o\left(n_j^{-1/2}\gamma_j^{2d}\right)\;,
$$
as $j\to\infty$ by~(\ref{eq:term2-q0is1}).

If $\mathcal A_2=\emptyset$ we conclude that~(\ref{eq:term1-q0is1}) holds.
By Lemma~\ref{lem:alpha12}, we note that $\mathcal A_2=\emptyset$ if and only
if $d\leq 1/4$ and $I_0$ defined by~(\ref{eq:defJ}) is an empty set.
Hence, from now on, we assume that  $\mathcal A_2\neq\emptyset$, that is, either
$d\leq 1/4$ and $I_0\neq\emptyset$, or $d>1/4$. It only remains to show that, under
these conditions,
~(\ref{eq:reduction-condition}) implies
\begin{equation}
  \label{eq:A2-q0is1}
\lim_{j\to\infty} n_j^{1/2}\gamma_j^{-2d}\sup_{(q,q',p)\in \mathcal A_2} n_j^{-\alpha(q,q',p)}\gamma_j^{\beta'(q,q',p)}=0\;.
\end{equation}
To compute the sup, we first optimize on $p$, then on $q'$, and finally on $q$.

\medskip
\noindent\textbf{Optimization on $p$.} By
Lemma~\ref{lem:alphabeta-incdec}, if $(q,q',p)\in\mathcal A_{2}$ for a given
$(q,q')$, then $\alpha(q,q',p)$ is minimal and $\beta'(q,q',p)$ is maximal for
the largest possible $p$, which corresponds to $p=q-1$ if $q'=q$ and to $p=q$
if $q'>q$. For such a $p$, we have, if $q=q'$,
$$
\alpha(q,q',p)=\alpha(q,q,q-1)=\min(1-2d,1/2)\;,
$$
and if $q'>q$,
$$
\alpha(q,q',p)=\alpha(q,q',q)=1/2-\delta_+(q'-q)\;.
$$
Since being in $\mathcal A_2$ implies
$\alpha(q,q',p)<1/2$, we must have $1-2d<1/2$ (that is $d>1/4$) if $q=q'$ and
$\delta(q'-q)>0$ if $q'>q$.
To separate the cases $q=q'$ and $q\neq q'$, we define
$$
\mathcal A_{2,1}=
\begin{cases}
  \emptyset &\text{ if $d\leq1/4$,}\\
\{(q,q,q-1)~:~q\geq2,\,c_q\neq0\}&\text{ if $d>1/4$.}
\end{cases}
$$
and
$$
\mathcal A_{2,2}=\{(q,q',q)~:~q'>q\geq1,\,c_q c_{q'}\neq0,\,\delta(q'-q)>0\}\;.
$$
Note that in $\mathcal A_{2,1}$ we set $q\geq 2$ to avoid
$(q,q,q-1)=(1,1,0)$. Recall that the indices of the non-zero coefficients $c_q$
are labeled as $q_{\ell}$, see~(\ref{eq:qell-def}).  Then
$$
\{(q,q,q-1)~:~q\geq2,\,c_q\neq0\}=\{(q_{\ell},q_{\ell},q_{\ell}-1)~:~\ell\in\ellset,\,\ell\geq1\}\;,
$$
and, similarly,
$$
\mathcal A_{2,2}=\{(q_{\ell},q_{\ell'},q_{\ell})~:~\ell,\ell'\in\ellset,\,0\leq\ell<\ell',\,\delta(q_{\ell'}-q_{\ell})>0\}\;.
$$
Defining
\begin{equation}\label{e:Ajdef}
A_j:=\sup_{(\ell,\ell')} n_j^{-1/2+\delta(q_{\ell'}-q_{\ell})}\gamma_j^{\beta'(q_{\ell},q_{\ell'},q_{\ell})}\;,
\end{equation}
where the $\sup_{\ell,\ell'}$ is taken over $(\ell,\ell')\in\ellset^2$ such that
$\ell<\ell'$ and $\delta(q_{\ell'}-q_{\ell})>0$, and
\begin{equation}\label{e:Bjdef}
B_j:=\sup_{\ell} \gamma_j^{\beta'(q_{\ell},q_{\ell},q_{\ell}-1)} \;,
\end{equation}
where the $\sup_{\ell}$ is taken over all
$\ell\in\ellset$ such that $\ell\geq1$, we thus obtain the two following assertions.
\begin{itemize}
\item If $d\leq1/4$, the sup over $\mathcal A_2$ can be restricted to $\mathcal
  A_{22}$. This gives
\begin{equation}\label{e:sup1}
\sup_{(q,q',p)\in \mathcal A_2}
  n_j^{-\alpha(q,q',p)}\gamma_j^{\beta'(q,q',p)}= A_j\;,
\end{equation}
%(so that $q_{\ell}<q_{\ell'}$ and $\alpha(q_{\ell},q_{\ell'},q_{\ell})<1/2$).
\item If $d>1/4$, the sup over $\mathcal A_2$ has to be performed over  $\mathcal
  A_{21}$ and $\mathcal
  A_{22}$. This gives
\begin{equation}\label{e:sup2}
\sup_{(q,q',p)\in \mathcal A_2}
  n_j^{-\alpha(q,q',p)}\gamma_j^{\beta'(q,q',p)}=
\max\left(n_j^{-1+2d}B_j,A_j\right).
\end{equation}
\end{itemize}
\noindent\textbf{Optimization on $q'$.} We only need to consider $A_j$ since
$B_j$ corresponds to $q'=q$. For $A_j$, optimizing on $q'$ means optimizing on
$\ell'$ in the sup of~(\ref{e:sup1}).
We know from  Lemma~\ref{lem:alphabeta-incdec} that, for each
$\ell$, $\alpha(q_{\ell},q_{\ell'},q_{\ell})$ is non-decreasing and
$\beta'(q_{\ell},q_{\ell'},q_{\ell})$ is non-increasing as $\ell'$ increases,
hence the  $\sup_{\ell,\ell'}$ is achieved when $\ell'=\ell+1$ and thus
$\alpha(q_{\ell},q_{\ell'},q_{\ell})<1/2$ implies
\begin{equation}\label{e:AjJd}
A_j=
\begin{cases}
\displaystyle\sup_{\ell\in J_d}
n_j^{-1/2+\delta(q_{\ell+1}-q_{\ell})}\gamma_j^{\beta'(q_{\ell},q_{\ell+1},q_{\ell})}
&\text{ if $J_d\neq\emptyset$,}\\
0&\text{ otherwise}\;,
\end{cases}
\end{equation}
where $J_d$ is defined in~(\ref{e:Jdset}). When $J_d=\emptyset$, the sup
in~(\ref{e:Ajdef}) is taken over the empty set.  We use the convention $\sup_{\emptyset}(\dots)=0$.

\medskip
\noindent\textbf{Optimization on $q$.} We deal separately with the cases
\begin{enumerate}[label=(\alph*)]
\item\label{item-d14a} $d\leq 1/4$.
\item\label{item-d14b} $d>1/4$.
\end{enumerate}
The case~\ref{item-d14a} is the simplest since in~(\ref{e:sup1}), $B_j$ does not appear.
Recall also that we have $I_0\neq\emptyset$ in this case since we assumed $\mathcal
A_2\neq\emptyset$. The optimization on $q$ here amounts to optimize $A_j$ on
$\ell$ in~(\ref{e:AjJd}) in the case $J_d\neq\emptyset$.
Observe that when  $d\leq 1/4$, $\delta(r)=0$ for all
$r\geq2$, see~(\ref{e:ldparamq}). Thus the condition
$\delta(q_{\ell+1}-q_{\ell})>0$ on $\ell\in J_d$ is equivalent to
$q_{\ell+1}=q_{\ell}+1$, in which case
$\delta(q_{\ell+1}-q_{\ell})=\delta(1)=d>0$ and $J_d=I_0$. Hence,
$$
A_j=\sup_{\ell\in I_0}
n_j^{-1/2+d}\gamma_j^{\beta'(q_{\ell},q_{\ell}+1,q_{\ell})}
=\sup_{\ell\in I_0}
n_j^{-1/2+d}\gamma_j^{2\delta_+(q_{\ell})+d-1/2}\;,
$$
where we used that
$\beta'(q_{\ell},q_{\ell}+1,q_{\ell})=\max(2\delta_+(q_{\ell})+d-1/2,-1/2)=2\delta_+(q_{\ell})+d-1/2$,
see~(\ref{e:betap}). This sup is achieved for the smallest $\ell$ since
$\delta_+$ is non-increasing. Recall that the smallest $\ell$ in $I_0$ is denoted
by $\ell_0$ in~(\ref{e:l0}), thus,
$$
A_j=n_j^{-1/2+d}\gamma_j^{2\delta_+(q_{\ell_0})+d-1/2}\;.
$$
Now, we note that in case~\ref{item-d14a} with $q_0=1$, $\nu_c$ in Definition~\ref{def:nuc} takes value
$$
\nu_c=\frac{d+1/2-2\delta_+(q_{\ell_0})}{d}\;.
$$
Hence Condition~(\ref{eq:reduction-condition}) implies
\[
n_j^{1/2}\gamma_j^{-2d} A_j=n_j^{d}\gamma_j^{2\delta_+(q_{\ell_0})-d-1/2}=o(1)\;.
\]
With~(\ref{e:sup1}), we obtain~(\ref{eq:A2-q0is1}) and case~\ref{item-d14a}
is complete.

We now turn to the case~\ref{item-d14b}, that is, we assume now that
$d> 1/4$ and show that~(\ref{eq:A2-q0is1}) holds under
Condition~(\ref{eq:reduction-condition}). Optimizing  $B_j$ on $q$ amounts to optimizing the sup in~(\ref{e:Bjdef}) on
$\ell\in\ellset$ with $\ell\geq1$.
Note that
$\beta'(q_{\ell},q_{\ell},q_{\ell}-1)=\max(2\delta_+(q_{\ell}-1)+2d-1,-1/2)=2\delta_+(q_{\ell}-1)+2d-1$
which is non-increasing as $\ell$ increases. Hence the  sup in~(\ref{e:Bjdef})  is
achieved for $\ell=1$ and thus
\begin{equation}\label{e:Bjsimple}
B_j =\gamma_j^{2\delta_+(q_1-1)+2d-1} \;.
\end{equation}
Note that in this case $\nu_c$ in Definition~\ref{def:nuc} takes value
\begin{equation}
  \label{eq:nuc-dinf1quart}
\nu_c=
\begin{cases}
\frac{1-2\delta_+(q_1-1)}{2d-1/2} &\text{ if $J_d=\emptyset$} \;,\\
\min\left(\frac{1-2\delta_+(q_1-1)}{2d-1/2},\displaystyle\min_{r\in\Irset}
\left(\frac{2d+1/2-2\delta_+(q_{\ell_r})-\delta(r+1)}{\delta(r+1)}\right)\right)
  &\text{ if $J_d\neq\emptyset$.}
\end{cases}
\end{equation}
In both cases, we have  $\nu_c\leq(1-2\delta_+(q_1-1))/(2d-1/2)$,
and thus Condition~(\ref{eq:reduction-condition}) implies, as $j\to\infty$,
$$
\gamma_j^{-2d}n_j^{-1/2+2d}B_j=n_j^{-1/2+2d}\gamma_j^{2\delta_+(q_1-1)-1} = o(1)\;.
$$
If $J_d=\emptyset$ so that $A_j=0$ in~(\ref{e:AjJd}), we thus obtain
with~(\ref{e:sup2}) that~(\ref{eq:reduction-condition})
implies~(\ref{eq:A2-q0is1}).  Similarly, if $J_d\neq\emptyset$, which we now
assume, it only remains to prove that~(\ref{eq:reduction-condition}) implies
\begin{equation}
  \label{eq:AjremainsJdneqempty}
 \lim_{j\to\infty} n_j^{1/2}\gamma_j^{-2d}A_j  =0\;.
\end{equation}
Using~(\ref{e:AjJd}) and that
$\beta'(q_{\ell},q_{\ell+1},q_{\ell})=\max(2\delta_+(q_{\ell})+1/2+\delta(q_{\ell+1}-q_{\ell})-1,1/2)=2\delta_+(q_{\ell})+\delta(q_{\ell+1}-q_{\ell})-1/2$,
we have
\begin{equation}
  \label{eq:Aj-Jd}
A_j=\sup_{\ell\in J_d}
n_j^{-1/2+\delta(q_{\ell+1}-q_{\ell})}\gamma_j^{2\delta_+(q_{\ell})+\delta(q_{\ell+1}-q_{\ell})-1/2} \;.
\end{equation}
Optimizing on $q$ here means optimizing this sup on $\ell\in J_d$.
To do so, we partition $J_d$ as in~(\ref{eq:Jd-decomp}) and,
by the definition of $I_r$ in~(\ref{eq:defIrset}), we have, for all $\ell\in I_{r}$,
$$
n_j^{-1/2+\delta(q_{\ell+1}-q_{\ell})}\gamma_j^{2\delta_+(q_{\ell})+\delta(q_{\ell+1}-q_{\ell})-1/2}
=n_j^{-1/2+\delta(r+1)}\gamma_j^{2\delta_+(q_{\ell})+\delta(r+1)-1/2}\;.
$$
Since $\delta$ is non-increasing, we get with Definition~(\ref{eq:def-ellr}) that, for all $r\in\Irset$,
$$
\sup_{\ell\in I_r}
n_j^{-1/2+\delta(q_{\ell+1}-q_{\ell})}\gamma_j^{2\delta_+(q_{\ell})+\delta(q_{\ell+1}-q_{\ell})-1/2}
= n_j^{-1/2+\delta(r+1)}\gamma_j^{2\delta_+(q_{\ell_{r}})+\delta(r+1)-1/2}\;.
$$
Hence, by~(\ref{eq:Aj-Jd}) and~(\ref{eq:Jd-decomp}), we get
that
$$
n_j^{1/2}\gamma_j^{-2d}A_j=\max_{r\in\Irset}
n_j^{\delta(r+1)}\gamma_j^{2\delta_+(q_{\ell_{r}})+\delta(r+1)-1/2-2d} \;.
$$
Now, since we are in the case $J_d\neq\emptyset$, $\nu_c$
in~(\ref{eq:nuc-dinf1quart}) satisfies
$\nu_c\leq(2d+1/2-2\delta_+(q_{\ell_r})-\delta(r+1))/(\delta(r+1))$ for all
$r\in\Irset$ and recalling that $\Irset$ is a finite set
(see~(\ref{eq:Irsetisfinite})), we see that~(\ref{eq:reduction-condition})
implies~(\ref{eq:AjremainsJdneqempty}). The proof of the case $q_0=1$ is
concluded.

\subsubsection{Proof of Theorem~\ref{th:main2:paper4} in the case $q_0\geq 2$}\label{s:proof:th:case2}
In this case, we need to show that~(\ref{eq:reduction-condition})
implies~(\ref{eq:remains-q0geq2}).

Recall that in Assumptions \textbf{A} include
Condition~(\ref{e:longmemorycondition}) and thus $q_0\geq2$ implies $d>1/4$.
Hence we have $1-2d<1/2$ and since moreover for all $q'\geq q\geq q_0$ and $0\leq
p\leq q$, we have
$\beta(q,p)+\beta(q',p)\leq 2\delta_+(q_0)=2\delta(q_0)$, we obtain that
\[
\lim_{j\to\infty}n_j^{1-2d}\gamma_j^{-2\delta(q_0)}\sup_{(q,q',p)\in\mathcal A_0}
n_j^{-1/2}\gamma_j^{\beta(q,p)+\beta(q',p)}= 0\;.
\]
This correspond to the second term between brackets
in~(\ref{eq:remains-q0geq2}) and we thus only need to prove that
~(\ref{eq:reduction-condition}) implies
\begin{equation}\label{eq:q0neq1-toshow}
\lim_{j\to\infty}n_j^{1-2d}\gamma_j^{-2\delta(q_0)}\sup_{(q,q',p)\in\mathcal A_0}
n_j^{-\alpha(q,q',p)}\gamma_j^{\beta'(q,q',p)}= 0\;.
\end{equation}
Let us partition $\mathcal{A}_0$ into $\mathcal{A}_0=\cup_{i=1}^5\mathcal{A}_i$, where
\begin{equation*}
\begin{array}{lll}
\mathcal A_1&=&\{(q,q',p)\in\mathcal{A}_0,\,q=q'=q_0\},\\
\mathcal A_2&=&\{(q,q,p)\in\mathcal{A}_0,\,q=q'>q_0\},\\
\mathcal A_3&=&\{(q,q',p)\in\mathcal{A}_0,\, q'\geq q+2\},\\
\mathcal A_4&=&\{(q,q',p)\in\mathcal{A}_0,\, q'= q+1,\,p\leq q-1\},\\
\mathcal A_5&=&\{(q,q',p)\in\mathcal{A}_0,\,q=p,\,q'=q+1\}\;.
\end{array}
\end{equation*}
We shall prove that for $i=1,2,3,4$,
\begin{equation}
  \label{eq:q0neq1-toshow-14}
\lim_{j\to\infty}\; n_j^{1-2d}\gamma_j^{-2\delta(q_0)}\sup_{(q,q',p)\in\mathcal A_i}
n_j^{-\alpha(q,q',p)}\gamma_j^{\beta(q,p)+\beta(q',p)}=0 \;,
\end{equation}
and that, when $I_0$ defined as in~(\ref{eq:defJ}) is not empty,~(\ref{eq:reduction-condition}) implies
\begin{equation}
  \label{eq:q0neq1-toshow5}
\lim_{j\to\infty}\; n_j^{1-2d}\gamma_j^{-2\delta(q_0)}\sup_{\ell\in I_0}
n_j^{-\alpha(q_{\ell},q_{\ell}+1,q_{\ell})}\gamma_j^{\beta'(q_{\ell},q_{\ell}+1,q_{\ell})}=0 \;.
\end{equation}
Since $\beta'(q,q',p)\leq \beta(q,p)+\beta(q',p)$ and
$\mathcal A_5=\{(q_{\ell},q_{\ell}+1,q_{\ell})~:~\ell\in I_0\}$, we indeed
have that~(\ref{eq:q0neq1-toshow-14}) and~(\ref{eq:q0neq1-toshow5})
imply~(\ref{eq:q0neq1-toshow}) and the proof will be concluded.

The limit~(\ref{eq:q0neq1-toshow-14}) can be deduced for  $i=1,\cdots,4$ from Lemma~8.3
of~\cite{clausel-roueff-taqqu-tudor-2013}. More precisely this lemma implies
the following facts (recall that $d>1/4$).
\begin{enumerate}
\item For all $p=0,\dots,q_0-2$, we have
$\alpha(q_0,q_0,p)>1-2d$ and $2\beta(q_0,p)\leq
2\delta(q_0)$, which implies~(\ref{eq:q0neq1-toshow-14}) for $i=1$ since $(q_0,q_0,q_0-1)$ is excluded from $\mathcal{A}_0$.
\item For all $q\geq q_0+1$ and $p=0,\dots,q-1$, we have
  $\alpha(q,q,p)\geq 1-2d$ and $2\beta(q,p)\leq2\delta_+(q_0+1)<2\delta(q_0)$, which implies~(\ref{eq:q0neq1-toshow-14}) for $i=2$.
\item For $q\geq q_0$, $q'\geq q+2$ and $p=0,\dots,q$, we have $\alpha(q,q',p)\geq
  1-2d$ and  $\beta(q,p)+\beta(q',p)\leq\delta_+(q_0)+\delta_+(q_0+2)<2\delta(q_0)$,  which implies~(\ref{eq:q0neq1-toshow-14}) for $i=3$.
\item For $q\geq q_0$ and
  $p=0,\dots,q-1$, we have $\alpha(q,q+1,p)\geq\min(3/2(1-2d),1/2)>1-2d$ and
  $\beta(q,p)+\beta(q+1,p)\leq\delta_+(q_0+1)+\delta(q_0)<2\delta(q_0)$,  which implies~(\ref{eq:q0neq1-toshow-14}) for $i=4$.
\end{enumerate}
Hence we obtain that~(\ref{eq:q0neq1-toshow-14}) is valid for $i=1,\cdots,4$.
If $I_0$ is empty, the proof is concluded.
We now assume that $I_0$ is not empty, so
that $\ell_0$ is finite, and it only remains to show
that Condition~(\ref{eq:reduction-condition}) implies~(\ref{eq:q0neq1-toshow5}).
Observe that, for any $q\geq q_0$, we have $\alpha(q,q+1,q)=1/2-d$ and
$\beta'(q,q+1,q)=2\delta_+(q)+d-1/2$ is non--increasing as $q$ increases. Hence
over $\ell\in I_0$, $\alpha(q_{\ell},q_{\ell}+1,q_{\ell})$ is constant and
$\beta'(q_{\ell},q_{\ell}+1,q_{\ell})$ is maximal at $\ell=\ell_0$,
where it takes value $2\delta_+(q_{\ell_0})+d-1/2$. We conclude that
$$
n_j^{1-2d}\gamma_j^{-2\delta(q_0)}\sup_{\ell\in I_0}
n_j^{-\alpha(q_{\ell},q_{\ell}+1,q_{\ell})}\gamma_j^{\beta'(q_{\ell},q_{\ell}+1,q_{\ell})}
= n_j^{1/2-d}\gamma_j^{d-1/2-2(\delta(q_0)-\delta_+(q_{\ell_0}))}\;.
$$
Note that in this case $\nu_c$ in Definition~\ref{def:nuc} takes value
\[
\nu_c= 1+\frac{4(\delta(q_0)-\delta_+(q_{\ell_0}))}{1-2d} \;.
\]
Thus Condition~(\ref{eq:reduction-condition}) implies~(\ref{eq:q0neq1-toshow5})
and the proof is finished.

% \section{Proof of the statistical inference results}
% \label{sec:proof-estimation}
\subsection{Proof of Theorem~\ref{thm:test}}\label{sec:proof-hyp-test}

The fact that the test  $\delta_s$ is consistent follows directly from the
consistency statement in Theorem~\ref{thm:cons-wavelet-log-regression} and the fact that $(u_N)$ is diverging.

To show that the test $\delta_s$ has asymptotic confidence level $\alpha$, it suffices to
show that when $d_0=d_0^*$ (null hypothesis), we have
\begin{equation}\label{e:had-d0-clt}
u_N(\hat{d}_0-d_0)\overset{(\mathcal{L})}{\rightarrow}U\;.
\end{equation}
We first observe that under the conditions on $j=(j_N)$ of the theorem, the
convergence~(\ref{eq:dtilde-clt}) involving $\widetilde{d}_0$ holds,
see \cite{clausel-roueff-taqqu-tudor-2011b}.

The
computations of Section~5 in \cite{clausel-roueff-taqqu-tudor-2011b} allows us
to specify~(\ref{e:hat-d0-approx}) as
$$
\hat d_{0}-d_0 =
L(2^{-2d_0j}\overline{\mathbf{S}}_{n_j,j})+o_P\left(2^{-2d_0j}\overline{\mathbf{S}}_{n_j,j}\right)+O(2^{-\zeta
  j})\;,
$$
where $L$ is the linear form
\[
L(z_1,\cdots,z_p)=\sum_{i=1}^p w_i 2^{-2d_0 i}z_i\;,
\]
where the weights $(w_i)$ have been defined in Section~\ref{sec:stat-inf-setting}.
The same linearization holds for
$\widetilde{d}_0-d_0$ with $\overline{\mathbf{S}}_{n_j,j}$ replaced by $\mathbf{S}_{n_j,j}^{(q_0,q_0,q_0-1)}$, so by subtracting, we get
\begin{equation}\label{e:hat-d0-approx-precised}
  \hat d_{0}-d_0 =
  \widetilde{d}_0-d_0+2^{-2d_0j}\left[O_P\left(\left|\overline{\mathbf{S}}_{n_j,j}-\mathbf{S}_{n_j,j}^{(q_0,q_0,q_0-1)}\right|\right)+o_P\left(\left|\mathbf{S}_{n_j,j}^{(q_0,q_0,q_0-1)}\right|\right)\right]+O(2^{-\zeta
    j})\;.
\end{equation}
Using~(\ref{eq:reduction-test}), which corresponds
to~(\ref{eq:reduction-condition}) under $H_0$, we can
apply Theorem~\ref{th:main2:paper4} so that
$$
\overline{\mathbf{S}}_{n_j,j}-\mathbf{S}_{n_j,j}^{(q_0,q_0,q_0-1)}=o_P\left(\left\|\mathbf{S}_{n_j,j}^{(q_0,q_0,q_0-1)}\right\|_2\right)\;.
$$
With~(\ref{e:hat-d0-approx-precised}), we get
$$
  \hat d_{0}-d_0 =
  \widetilde{d}_0-d_0+o_P\left(2^{-2d_0j}\left\|\mathbf{S}_{n_j,j}^{(q_0,q_0,q_0-1)}\right\|_2\right)+O(2^{-\zeta
    j})\;.
$$
Since $u_N(\widetilde{d}_0-d_0)$ converges in distribution, it remains to check
that $u_N\left\|\mathbf{S}_{n_j,j}^{(q_0,q_0,q_0-1)}\right\|_2=O(1)$ and
$u_N2^{-\zeta j}=o(1)$. By the definition of $u_N$
in~(\ref{eq:dtilde-clt}) and since $\gamma_j=2^j$, $d_0=K+\delta(q_0)$ and
$\delta(1)=d$, the asymptotic equivalences~(\ref{eq:q0eq1-rate-leading})
and~(\ref{eq:q0neq1-rate-leading}) in Proposition~\ref{prop:sharp-rates-mainterm} can be written as
$$
\|S_{n_j,j}^{(q_0,q_0,q_0-1)}\|_2\sim C\;u_N^{-1}2^{2d_0j}\;.
$$
The bound $u_n\left\|\mathbf{S}_{n_j,j}^{(q_0,q_0,q_0-1)}\right\|_2=O(1)$
follows under $H_0$. Finally the bound $u_N2^{-\zeta j}=o(1)$ follows from the
bias negligibility condition~(\ref{eq:bias-negligibility-condition}). Hence we
get~(\ref{e:had-d0-clt}), which concludes the proof.
\section{Technical lemmas}\label{s:lemmas}
The next lemma give an explicit expression of the spectral density of $H_q(X)$ for $q<1/(1-2d)$ and is a refined version of Lemma~4.1 in~\cite{clausel-roueff-taqqu-tudor-2011a}. It is used in the proof of Theorem~\ref{thm:spec-density}.
\begin{lemma}\label{lem:spec-density}
Let $q$ be a positive integer greater than $2$. The spectral density of $\{H_q(X_\ell)\}_{\ell\in\mathbb{Z}}$ is
\begin{equation}\label{e:spec-dens-Hq}
f_{H_q}:=q!(f\star\cdots \star f)\;,
\end{equation}
where $f$ denotes the spectral density of $X=\{X_\ell\}_{\ell\in\mathbb{Z}}$. Moreover if in addition $q<1/(1-2d)$ the function $f^*_{H_q}$ in
\begin{equation}\label{e:fstar-Hq}
f_{H_q}(\lambda)=|1-\rme^{-\rmi\lambda}|^{-2\delta(q)}f^*_{H_q}(\lambda)\;.
\end{equation}
is bounded on $\lambda\in (-\pi,\pi)$ and for any $\zeta\in (0,2\delta(q))$ such that $\zeta\leq \beta$, where $\beta$ has been defined in~(\ref{e:beta-def}), one has
\begin{equation}\label{e:fstar-Hq-lip}
|f^*_{H_q}(\lambda)-f^*_{H_q}(0)|\leq L|\lambda|^\zeta\;,
\end{equation}
for some $L>0$.
\end{lemma}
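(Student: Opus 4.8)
The plan is to treat the two assertions separately. For the convolution formula~(\ref{e:spec-dens-Hq}) I would start from the Mehler (diagram) formula: since $X$ is centred Gaussian with unit variance and autocovariance $\gamma_X(k)=\int_{-\pi}^{\pi}\rme^{\rmi k\lambda}f(\lambda)\,\rmd\lambda$, one has $\mathbb{E}[H_q(X_0)H_q(X_k)]=q!\,\gamma_X(k)^q$, so the autocovariance of $\{H_q(X_\ell)\}_{\ell\in\mathbb{Z}}$ equals $q!\,\gamma_X(k)^q$. The Fourier coefficients of a product of periodic functions being the convolution of their coefficients, the sequence $(\gamma_X(k)^q)_k$ is precisely the Fourier coefficient sequence of the $q$-fold circular convolution $f\star\cdots\star f$; multiplying by $q!$ gives~(\ref{e:spec-dens-Hq}). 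I would then check that this is genuinely a spectral density: $f\in L^1(-\pi,\pi)$ and, since $d<1/2$ forces $\delta(q)\leq\delta(1)=d<1/2$, the singularity of the $q$-fold convolution at the origin is integrable, whence $f_{H_q}\in L^1(-\pi,\pi)$.

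For the factorization~(\ref{e:fstar-Hq}) and the boundedness of $f^*_{H_q}$, the key is a homogeneity (scaling) analysis of the convolution near $\lambda=0$. Writing $g(\lambda)=|1-\rme^{-\rmi\lambda}|^{-2d}$, which behaves like $|\lambda|^{-2d}$ at the origin, I would use the elementary fact that convolving $|\lambda|^{-a}$ with $|\lambda|^{-b}$ (for $0\leq a,b<1$ and $a+b>1$) produces a function singular of order $a+b-1$ near $0$, together with a bounded background term. Iterating over the $q$ factors, the singularity exponent obeys the recursion $a\mapsto a+2d-1$, starting from $2d$; this yields $2d+(q-1)(2d-1)=2qd-(q-1)=2\delta(q)$. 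Hence $g^{\star q}(\lambda)=|1-\rme^{-\rmi\lambda}|^{-2\delta(q)}\psi_q(\lambda)$ with $\psi_q$ bounded and $\psi_q(0)>0$, giving~(\ref{e:fstar-Hq}) and identifying $f^*_{H_q}(0)$. This is where the hypothesis $q<1/(1-2d)$, i.e. $\delta(q)>0$, is essential: the partial sum $2\delta(k)+2d-1=2\delta(k+1)$ stays positive for all $k\leq q$, so every intermediate convolution remains singular and the homogeneity bookkeeping is valid at each step.

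To obtain the Hölder estimate~(\ref{e:fstar-Hq-lip}), I would exploit the semiparametric structure of $f$. Splitting $f=f^*(0)\,g+h$ with $h(\lambda)=g(\lambda)\,(f^*(\lambda)-f^*(0))$, assumption~(\ref{e:beta-def}) gives $|h(\lambda)|\leq Cf^*(0)\,|\lambda|^{\beta-2d}$, so $h$ is singular of order $2d-\beta<2d$. Expanding the $q$-fold convolution by the commutative, associative binomial rule,
\[
f^{\star q}=\sum_{j=0}^{q}\binom{q}{j}(f^*(0))^{j}\,g^{\star j}\star h^{\star(q-j)}\;,
\]
separates the pure term $j=q$ from the remainder. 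For the pure term $(f^*(0))^q g^{\star q}$, the leading correction to the singularity is the bounded background constant, so after multiplication by $|1-\rme^{-\rmi\lambda}|^{2\delta(q)}$ one finds $|\psi_q(\lambda)-\psi_q(0)|=O(|\lambda|^{2\delta(q)})$; here one checks $2\delta(q)<2-2d$ (equivalent to $d<1/2$), which guarantees that this constant background dominates the smoother shape-correction coming from $|1-\rme^{-\rmi\lambda}|$ versus $|\lambda|$. Each remainder term carries at least one factor $h$, lowering the singularity exponent by $\beta$; the homogeneity count shows such a term is singular of order at most $2\delta(q)-\beta$, hence after multiplication by $|1-\rme^{-\rmi\lambda}|^{2\delta(q)}$ it vanishes at the origin like $|\lambda|^{\min(\beta,2\delta(q))}$. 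Collecting the contributions bounds $|f^*_{H_q}(\lambda)-f^*_{H_q}(0)|$ by $L|\lambda|^\zeta$ for every $\zeta\leq\beta$ with $\zeta<2\delta(q)$, which is~(\ref{e:fstar-Hq-lip}).

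The main obstacle is the family of convolution estimates underlying the last two paragraphs: one must show not only that convolving two algebraically singular kernels reproduces the expected power-law singularity, but also that the associated regular factor is bounded, bounded away from zero at the origin, and inherits a quantitative Hölder modulus, all uniformly on the circle and stably under the $q$-fold iteration. The two constraints on $\zeta$ then arise transparently from this bookkeeping: $\zeta\leq\beta$ reflects the regularity of the nuisance function $f^*$ entering through the remainder terms, while $\zeta<2\delta(q)$ reflects the nonzero constant background produced by the pure convolution $g^{\star q}$, which caps the attainable Hölder exponent. The boundary values $\delta(q)=0$ (that is, $d=1/2-1/(2q)$) are precisely those where this analysis degenerates into logarithmic corrections, and are excluded by working with $q$ such that $\delta(q)>0$.
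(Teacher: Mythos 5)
Your proposal is correct in substance but organizes the argument differently from the paper. For the convolution formula~(\ref{e:spec-dens-Hq}) and the boundedness of $f^*_{H_q}$, the paper simply cites Lemma~4.1 of \cite{clausel-roueff-taqqu-tudor-2011a}; your derivation via Mehler's formula $\mathbb{E}[H_q(X_0)H_q(X_k)]=q!\,\gamma_X(k)^q$ and the homogeneity recursion $a\mapsto a+2d-1$ is exactly the content of that cited lemma, so no disagreement there. The real divergence is in the proof of the H\"older estimate~(\ref{e:fstar-Hq-lip}): the paper proceeds by induction on $q$, at each step convolving $f_{H_{q-1}}$ (singularity $2\delta(q-1)$, short-range part already H\"older) with one more copy of $f$ (singularity $2d$, short-range part $\beta$-H\"older) and invoking a two-factor convolution lemma (Lemma~8.2 of \cite{clausel-roueff-taqqu-tudor-2011a}) whose hypothesis forces precisely $\zeta\leq\beta$ and $\zeta<\beta_1+\beta_2-1=2\delta(q)$ at the last step. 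You instead split $f=f^*(0)g+h$ once and expand the $q$-fold convolution binomially, isolating the pure term $(f^*(0))^qg^{\star q}$ from remainder terms carrying at least one factor of $h$. Both routes ultimately rest on the same unproved (in this paper) estimates for convolutions of algebraically singular kernels on the circle --- including the H\"older modulus of the regular factor --- which the paper outsources to its companion paper and which you correctly flag as the main technical obstacle. The paper's induction is more economical: the inductive hypothesis packages exactly the singularity exponent and H\"older modulus needed, and one two-kernel lemma does all the work. Your expansion makes the provenance of the two constraints on $\zeta$ more transparent ($\zeta\leq\beta$ from the nuisance part $h$, $\zeta<2\delta(q)$ from the bounded background of $g^{\star q}$), but at the cost of controlling many cross terms $g^{\star j}\star h^{\star(q-j)}$ whose intermediate singularity exponents $2\delta(q)-(q-j)\beta$ may change sign, so the power-law convolution rule and the bounded-function case must each be handled; this bookkeeping is heavier than the paper's single inductive step, though it goes through.
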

\begin{proof}
The explicit expression~(\ref{e:spec-dens-Hq}) of $f_{H_q}$ has already been given in Lemma~4.1 in~\cite{clausel-roueff-taqqu-tudor-2011a}. Moreover in the same lemma, we also already showed that $f^*_{H_q}$ defined by~(\ref{e:fstar-Hq}) is a bounded function. We then only need to prove that~(\ref{e:fstar-Hq-lip}) holds for some $L>0$. We prove the result by induction on $q$.

Assume first that $q=2$. By assumption on $f^*$ and definition of $\beta$, we know that for some $C>0$ and any $\zeta\leq\beta$
\begin{equation}\label{e:f-star}
|f^*(\lambda)-f^*(0)|\leq C|\lambda|^\zeta\;.
\end{equation}
Since $f_{H_2}=2f*f$, we then apply the second part of Lemma~8.2 of~\cite{clausel-roueff-taqqu-tudor-2011a}, with $\beta_1=\beta_2=2d$, $g_1^*=g_2^*=f^*$ (using the notations of that lemma). We see that Condition (66) of Lemma~8.2 of~\cite{clausel-roueff-taqqu-tudor-2011a} is satisfied provided that $\zeta\leq \beta$ and $\zeta<\beta_1+\beta_2-1=2d+2d-1=2\delta(2)$ (which are necessary conditions of the lemma). Hence for some $L>0$, one has
\[
|f^*_{H_2}(\lambda)-f^*_{H_2}(0)|\leq L|\lambda|^\zeta\;.
\]

If we now assume that $q>2$, we can also apply the second part of Lemma~8.2 of~\cite{clausel-roueff-taqqu-tudor-2011a}, with $\beta_1=2\delta(q-1)$, $\beta_2=2d$, $g_1^*=f^*_{H_{q-1}}$ and $g_2^*=f^*$ which allows us to proceed by induction.
\end{proof}
Lemmas~\ref{lem:peq0} to~\ref{lem:alpha12}
are used in the proof of Theorem~\ref{th:main2:paper4}.
\begin{lemma}\label{lem:peq0}
Let $\delta^+$ be the exponent defined in~(\ref{e:ldparamq}). One has
\begin{equation}\label{e:UB:beta:peq0}
\sup\left\{\delta^+(q)+\delta^+(q')~:~1\leq q\leq q',\,(q,q')\neq(1,1)\right\}<2d\;.
\end{equation}
\end{lemma}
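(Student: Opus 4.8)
The plan is to exploit the strict monotonicity of $\delta$ together with the exclusion of the diagonal pair $(1,1)$.

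First I would record the elementary properties of $\delta$ and $\delta_+$ coming from~(\ref{e:ldparamq}). Since $\delta(q+1)-\delta(q)=d-1/2<0$, the map $q\mapsto\delta(q)$ is strictly decreasing; and since $\delta(1)=d>0$ we have $\delta_+(1)=d$ and $\delta_+$ non-increasing, so that $\delta_+(q)\le d$ for every $q\ge1$. This already yields the non-strict bound $\delta_+(q)+\delta_+(q')\le 2d$; the whole point of the lemma is to gain strictness once $(1,1)$ is removed.

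The key observation is that, for an admissible pair $1\le q\le q'$ with $(q,q')\neq(1,1)$, one necessarily has $q'\ge2$: if $q'=1$ then $q\le1$ forces $q=q'=1$, which is excluded. I would then bound the larger index: since $q'\ge2$ and $\delta_+$ is non-increasing, $\delta_+(q')\le\delta_+(2)$. The decisive quantitative step is the strict inequality $\delta_+(2)<d$, which I would verify in the two regimes separately: when $d\le1/4$ one has $\delta_+(2)=\max(2d-1/2,0)=0<d$, while when $d>1/4$ one has $\delta_+(2)=2d-1/2<d$, the latter being equivalent to $d<1/2$.

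Combining these estimates yields, for every admissible pair, the uniform bound
\[
\delta_+(q)+\delta_+(q')\le d+\delta_+(2)<2d\;,
\]
so the supremum in~(\ref{e:UB:beta:peq0}) is at most $d+\delta_+(2)$, which lies strictly below $2d$. The argument presents no real obstacle; the only delicate point is the strict inequality $\delta_+(2)<d$, and it is exactly this strictness---inherited from the constraint $0<d<1/2$---that prevents the supremum from attaining its boundary value $2d$.
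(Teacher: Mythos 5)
Your proof is correct and follows essentially the same route as the paper's: note that $q'\geq 2$ for any admissible pair, use monotonicity of $\delta_+$ to bound the sum by $\delta_+(1)+\delta_+(2)=d+(2d-1/2)_+$, and conclude by the strict inequality $(2d-1/2)_+<d$ coming from $0<d<1/2$. Your explicit case split on $d\leq 1/4$ versus $d>1/4$ merely spells out what the paper compresses into the single expression $(2d-1/2)_+<d$.
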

\begin{proof}
For any $(q,q')$ in the considered set, one has $q\geq 1$ and
$q'\geq2$. Since $\delta_+$ is non-increasing, we get
$\delta^+(q)+\delta^+(q')\leq \delta^+(1)+\delta^+(2)=d+(2d-1/2)_+<2d$ since $d<1/2$. Lemma~\ref{lem:peq0} follows.
\end{proof}
\begin{lemma}\label{lem:alphabeta-incdec}
  Let $\alpha(q,q',p)$ and $\beta'(q,q',p)$ be the exponents defined
  in~(\ref{e:alpha}) and (\ref{e:betap}) respectively, for $0\leq p\leq q\leq
  q'$. Then the following facts hold~:
  \begin{enumerate}[label=(\roman*)]
  \item\label{item:alpha-inc} $\alpha(q,q',p)$ is non-decreasing as $q$ or $q'$ increases and is
  non-increasing as $p$ increases.
\item\label{item:beta-decc} $\beta'(q,q',p)$ is non-increasing as $q$
  or $q'$ increases.
\item\label{item:beta-inc} On the set $\{(q,q',p)~:~0\leq p\leq q\leq
  q',\,\alpha(q,q',p)<1/2\}$, $\beta'(q,q',p)$ is non-decreasing as $p$
  increases.
\end{enumerate}
\end{lemma}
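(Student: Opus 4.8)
The plan is to reduce everything to the single fact that $\delta_+$ is non-increasing, which follows from~(\ref{e:ldparamq}): since $\delta(q)=q(d-1/2)+1/2$ is strictly decreasing in $q$ (because $d<1/2$) and $\delta_+=\max(\delta,0)$, the map $s\mapsto\delta_+(s)$ is non-increasing, and moreover $\delta_+(s)\leq\delta_+(0)=1/2$ for every integer $s\geq0$.

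For part~\ref{item:alpha-inc} I would argue directly from~(\ref{e:alpha}). For $p\geq1$ we have $\alpha(q,q',p)=\min(1-\delta_+(q-p)-\delta_+(q'-p),1/2)$; increasing $q$ (resp.\ $q'$) increases $q-p$ (resp.\ $q'-p$) and hence decreases $\delta_+(q-p)$, so the first argument of the $\min$ increases, and since $x\mapsto\min(x,1/2)$ is non-decreasing, so is $\alpha$. The case $p=0$ gives the constant $1/2$, so the claim is trivial there. For the monotonicity in $p$, increasing $p$ decreases both $q-p$ and $q'-p$, hence increases $\delta_+(q-p)+\delta_+(q'-p)$ and lowers the first argument of the $\min$, so $\alpha$ is non-increasing; the jump from $p=0$ (value $1/2$) to $p=1$ (value $\leq1/2$) is non-increasing as well. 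Part~\ref{item:beta-decc} is entirely analogous using~(\ref{e:betap}): increasing $q$ or $q'$ decreases $\delta_+(q-p)$ or $\delta_+(q'-p)$, hence decreases the inner expression $2\delta_+(p)+\delta_+(q-p)+\delta_+(q'-p)-1$, and $x\mapsto\max(x,-1/2)$ is non-decreasing.

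The substance is in part~\ref{item:beta-inc}, which is where I expect the real work. Writing $B(p)=2\delta_+(p)+\delta_+(q-p)+\delta_+(q'-p)-1$ so that $\beta'(q,q',p)=\max(B(p),-1/2)$, it suffices (again because $\max(\cdot,-1/2)$ is non-decreasing) to show $B(p+1)\geq B(p)$ whenever $p+1\leq q$ and $\alpha(q,q',p)<1/2$. I would start from the telescoping difference
\[
B(p+1)-B(p)=2\bigl[\delta_+(p+1)-\delta_+(p)\bigr]+\bigl[\delta_+(q-p-1)-\delta_+(q-p)\bigr]+\bigl[\delta_+(q'-p-1)-\delta_+(q'-p)\bigr].
\]
The crux of the argument is to exploit the hypothesis $\alpha(q,q',p)<1/2$: by~(\ref{e:alpha}) it forces $p\geq1$ and $\delta_+(q-p)+\delta_+(q'-p)>1/2$. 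Since $\delta_+\leq1/2$ and $q-p\leq q'-p$ gives $\delta_+(q-p)\geq\delta_+(q'-p)$, the constraint cannot hold if either term vanishes; hence \emph{both} $\delta_+(q-p)$ and $\delta_+(q'-p)$ are strictly positive, i.e.\ no clamping occurs in these two decreasing arguments.

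Granting this, $\delta_+$ agrees with the affine $\delta$ at $q-p$, $q-p-1$, $q'-p$ and $q'-p-1$, so the two reverse increments are exactly $\delta(q-p-1)-\delta(q-p)=1/2-d$ and likewise $1/2-d$ for the $q'$-term. For the remaining term I would use $\delta_+(p+1)-\delta_+(p)\geq\delta(p+1)-\delta(p)=-(1/2-d)$, which holds because $s\mapsto\delta_+(s)-\delta(s)=\max(-\delta(s),0)$ is non-decreasing. Substituting yields
\[
B(p+1)-B(p)\geq -(1-2d)+(1/2-d)+(1/2-d)=0,
\]
which is the desired monotonicity. The main obstacle is conceptual rather than computational: recognizing that the hypothesis $\alpha<1/2$ is precisely the \emph{no-clamping} condition on the two arguments $q-p$ and $q'-p$ that guarantees the two $1/2-d$ gains materialize and cancel the at-most-$(1-2d)$ loss coming from the $p$-term. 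Finally, since $\alpha$ is non-increasing in $p$ by part~\ref{item:alpha-inc}, the set in~\ref{item:beta-inc} is stable under increasing $p$, so the consecutive comparison $B(p+1)\geq B(p)$ indeed gives the asserted monotonicity of $\beta'$ throughout that set.
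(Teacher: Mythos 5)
Your proof is correct and follows essentially the same route as the paper: parts (i) and (ii) from the monotonicity of $\delta_+$, and part (iii) from the key observation that $\alpha(q,q',p)<1/2$ forces $p\geq1$ and $\delta_+(q-p),\delta_+(q'-p)>0$, so that no clamping occurs in those two arguments. The only (cosmetic) difference is in how the conclusion is reached for (iii): the paper rewrites $\beta'$ explicitly via the identity $2\delta(p)+\delta(q-p)+\delta(q'-p)-1=\delta(q)+\delta(q')$ with a case split on $\delta_+(p)$, whereas you bound the consecutive increment $B(p+1)-B(p)$ using $\delta_+(p+1)-\delta_+(p)\geq\delta(p+1)-\delta(p)$, which handles both cases uniformly.
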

\begin{proof}
  The facts~\ref{item:alpha-inc} and~\ref{item:beta-decc} directly follow by observing that $\delta_+$ is a non-increasing
  function. Now suppose that $\alpha(q,q',p)<1/2$. It follows that $p\neq0$ and
  $\delta_+(q-p)>0$ and $\delta_+(q'-p)>0$ since otherwise
  $\delta_+(q-p)+\delta_+(q'-p)\leq1/2$ which implies $\alpha(q,q',p)=1/2$ in
  the case $p\neq0$. Now, when  $\delta_+(q-p)>0$ and $\delta_+(q'-p)>0$, we
  have  in the definition of $\beta'$ that
$$
\beta'(q,q',p)=
\begin{cases}
\max(\delta(q-p)+\delta(q'-p)-1,-1/2)&\text{ if $\delta_+(p)=0$}\\
\max(\delta(q)+\delta(q'),-1/2)&\text{ if $\delta_+(p)>0$.}
\end{cases}
$$
The second line comes from the fact that
$2\delta(p)+\delta(q-p)+\delta(q'-p)-1=\delta(q)+\delta(q')$. Now it is clear
that $\beta'(q,q',p)$ is non-decreasing as $p$ increases.
\end{proof}
\begin{lemma}\label{lem:alpha12}
Let $\alpha(q,q',p)$ be the exponent defined in~(\ref{e:alpha}) for $0\leq p\leq q\leq q'$. Then we have $\alpha(q,q',p)\in(0,1/2]$ and the three
following assertions hold~:
\begin{enumerate}[label=(\roman*)]
\item\label{item:alpha120} For any $q\geq1$, $\alpha(q,q+1,q)=1/2-d<1/2$.
\item\label{item:alpha121} If $d\leq1/4$, then for all  $1\leq q\leq
    q'$ and $0\leq p\leq \min(q,q'-1)$ such that $q'\neq q+1$, we have $\alpha(q,q',p)=1/2$.
\item\label{item:alpha122} If $d>1/4$, then for all $q\geq2$, $\alpha(q,q,q-1)=1-2d<1/2$.
\end{enumerate}
\end{lemma}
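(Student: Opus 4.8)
The plan is to reduce everything to two elementary facts about the exponent $\delta_+$ recalled in~(\ref{e:ldparamq}). First, since $\delta(q)=q(d-1/2)+1/2$ and $d<1/2$, the map $q\mapsto\delta_+(q)=\max(\delta(q),0)$ is non-increasing, with $\delta_+(0)=1/2$ and $\delta_+(1)=\delta(1)=d$. Second, when $d\leq1/4$ one has $\delta(2)=2d-1/2\leq0$, so $\delta_+(s)=0$ for every $s\geq2$. With these in hand the three displayed assertions are just evaluations of~(\ref{e:alpha}) at specific triples, and the membership $\alpha(q,q',p)\in(0,1/2]$ follows from a single monotonicity estimate.

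First I would settle $\alpha(q,q',p)\in(0,1/2]$. The upper bound is immediate, since~(\ref{e:alpha}) defines $\alpha$ either as $1/2$ (when $p=0$) or as a minimum with $1/2$. For the strict lower bound, the case $p=0$ gives $\alpha=1/2$ outright, so I treat $p\neq0$ and must check $\delta_+(q-p)+\delta_+(q'-p)<1$. In the range relevant to Proposition~\ref{pro:upperB} one has $p\leq\min(q,q'-1)$, hence $q'-p\geq1$, so $\delta_+(q'-p)\leq\delta_+(1)=d$, while $\delta_+(q-p)\leq\delta_+(0)=1/2$. The argument of the minimum is therefore at least $1-(1/2+d)=1/2-d$, whence $\alpha\geq1/2-d>0$.

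For assertion~\ref{item:alpha120} I substitute $(q,q',p)=(q,q+1,q)$ with $q\geq1$, so $p\neq0$, $q-p=0$ and $q'-p=1$; this gives $1-\delta_+(0)-\delta_+(1)=1/2-d$, which is $<1/2$ because $d>0$, so the minimum in~(\ref{e:alpha}) equals $1/2-d$. Assertion~\ref{item:alpha122} is entirely analogous: for $(q,q,q-1)$ with $q\geq2$ one has $p=q-1\neq0$ and $q-p=q'-p=1$, whence the argument is $1-2\delta_+(1)=1-2d$, which is $<1/2$ precisely because $d>1/4$.

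The only place requiring a short case split is assertion~\ref{item:alpha121}, where $d\leq1/4$ and $q'\neq q+1$ and I must show $\alpha=1/2$, that is, $\delta_+(q-p)+\delta_+(q'-p)\leq1/2$ whenever $p\neq0$ (the case $p=0$ again being immediate). Since $q'\geq q$ and $q'\neq q+1$, either $q'=q$ or $q'\geq q+2$. If $q'=q$, then $p\leq q-1$ forces $q-p=q'-p\geq1$, so each term is at most $\delta_+(1)=d\leq1/4$ and the sum is at most $2d\leq1/2$. If $q'\geq q+2$, then $q'-p\geq2$ (as $p\leq q$), so $\delta_+(q'-p)=0$ by the second fact, while $\delta_+(q-p)\leq\delta_+(0)=1/2$, and again the sum is $\leq1/2$. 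In both cases the argument of the minimum is $\geq1/2$, forcing $\alpha=1/2$. I expect this case analysis to be the only non-routine part; everything else is direct substitution into~(\ref{e:alpha}) combined with the monotonicity of $\delta_+$ and its vanishing beyond index $1$ when $d\leq1/4$.
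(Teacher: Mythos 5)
Your proof is correct and follows essentially the same route as the paper: assertions~\ref{item:alpha120} and~\ref{item:alpha122} by direct substitution into~(\ref{e:alpha}) using $\delta_+(0)=1/2$, $\delta_+(1)=d$, and assertion~\ref{item:alpha121} by the case split $q'=q$ versus $q'\geq q+2$, the only cosmetic difference being that you bound $\delta_+(q-p)+\delta_+(q'-p)$ directly from the monotonicity of $\delta_+$ where the paper instead reduces to the extremal triples $(p+1,p+1,p)$ and $(p,p+2,p)$ via Lemma~\ref{lem:alphabeta-incdec}\ref{item:alpha-inc}. You also rightly note that the membership $\alpha(q,q',p)\in(0,1/2]$ requires restricting to $p\leq\min(q,q'-1)$ (as in Proposition~\ref{pro:upperB}), since $p=q=q'$ would give $\alpha=0$; the paper's proof leaves this point implicit.
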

\begin{proof}
  Assertion~\ref{item:alpha120} follows by applying~(\ref{e:alpha}), since
  $\delta_+(0)=1/2$ and  $\delta_+(1)=d$.

  Suppose that $d\leq1/4$. If $p=0$,
  $\alpha(q,q',p)=1/2$ for any $q,q'$ by~(\ref{e:alpha}). Let now
  $p\geq1$. Let $(q,q')$ be such that   $1\leq q\leq
    q'$, $0\leq p\leq \min(q,q'-1)$ and $q'\neq q+1$. Then either $q=q'\geq
    p+1$ (first case) or $q'\geq q+2$ and $q\geq p$ (second case).
Then by Lemma~\ref{lem:alphabeta-incdec}(\ref{item:alpha-inc}), we
  have in the first case
$$
\alpha(q,q,p)\geq \alpha(p+1,p+1,p)=\min(1-2d,1/2)=1/2\;,
$$
since $d\leq1/4$. In the second case,
Lemma~\ref{lem:alphabeta-incdec}~\ref{item:alpha-inc} implies
$$
\alpha(q,q',p)\geq \alpha(p,p+2,p)=\min(1/2-\delta_+(2),1/2)=1/2\;,
$$
since $d\leq1/4$ implies $\delta_+(2)=0$. This proves
Assertion~\ref{item:alpha121}.

To obtain Assertion~\ref{item:alpha122}, we remark that
$$
\alpha(q,q,q-1)=\min(1-2d,1/2)=1-2d<1/2\;,
$$
since $d>1/4$.
\end{proof}

\begin{lemma}
  \label{lem:nuc-wrt-d}
  Consider a sequence $\{q_{\ell},\,\ell\in\ellset\}$ with $\ellset$ a set of
  consecutive integers starting at $0$. Let $\nu_c(d)$ be as in
  Definition~\ref{def:nuc} for all $d\in (1/2(1-1/q_0),1/2)$, so
  that~(\ref{e:longmemorycondition}) holds. Then the following assertions hold~:
\begin{enumerate}[label=(\roman*)]
\item\label{item:nuc-wrt-d-1} If $q_0=1$, $\nu_c(d)$ is non-increasing as $d$ increases.
\item\label{item:nuc-wrt-d-2} If $q_0\geq 2$, $\nu_c(d)$ is non-decreasing as $d$ increases.
\end{enumerate}
\end{lemma}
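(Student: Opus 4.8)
The plan is to rewrite $\nu_c(d)$, in each of the two cases, as the minimum of a small family of explicit rational functions of $d$, and then to track how that family and each of its members vary with $d$. The crucial structural observation is that the combinatorial data $I_0$, $I_r$, $\ell_0$ and $\ell_r$ entering Definition~\ref{def:nuc} depend only on the sequence $\{q_\ell\}_{\ell\in\ellset}$ and \emph{not} on $d$; the dependence on $d$ enters only through the conditions $\delta(r+1)>0$ (equivalently $r+1<1/(1-2d)$) and through the split at $d=1/4$, together with the position of the kink of $\delta_+$. This separation is what makes the monotonicity essentially soft.

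For the case $q_0=1$, I would first verify that for every $d\in(0,1/2)$,
\[
\nu_c(d)=\min\,\mathcal F(d),\qquad \min\emptyset:=\infty,
\]
where $\mathcal F(d)$ consists of the candidate $\nu^{(B)}(d)=\frac{1-2\delta_+(q_1-1)}{2d-1/2}$, present iff $d>1/4$ and $1\in\ellset$, together with, for each $r\in\Irset$, the candidate $\nu^{(r)}(d)=\frac{2d+1/2-2\delta_+(q_{\ell_r})-\delta(r+1)}{\delta(r+1)}$. Checking this reformulation is a line-by-line reading of Definition~\ref{def:nuc}: for $d\le1/4$ one has $\Irset\subseteq\{0\}$, and the $d\le1/4$ formula is exactly the $r=0$ term $\nu^{(0)}$ since $\delta(1)=d$. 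Next I would show each candidate is continuous and non-increasing by computing its derivative in $d$ in the two regimes where $\delta_+$ is constant, respectively affine, namely $\delta(q_{\ell_r})\le0$ versus $\delta(q_{\ell_r})>0$ (and similarly $\delta(q_1-1)\le0$ versus $>0$). In each regime the derivative reduces to a quotient with explicitly signed numerator (for instance $1-q_{\ell_r}-(r+1)/2<0$, or $1-\tfrac32(r+1)<0$), and the two regimes glue continuously at the kink.

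The conclusion for $q_0=1$ then rests on two facts. First, the active family only grows as $d$ increases: $\Irset$ is non-decreasing in $d$ because $\delta(r+1)>0\Leftrightarrow r+1<1/(1-2d)$, and $\nu^{(B)}$ remains present once $d>1/4$; moreover each candidate tends to $+\infty$ at its activation threshold (its denominator tends to $0^+$ while its numerator stays positive, bounded below by $1/2$), so the reformulation is even continuous across $d=1/4$ through the shared $r=0$ term. Second, each member of $\mathcal F(d_1)$ is defined and non-increasing on $[d_1,d_2]$. Hence, for $d_1<d_2$ with $\mathcal F(d_1)\subseteq\mathcal F(d_2)$,
\[
\nu_c(d_2)=\min_{t\in\mathcal F(d_2)}\nu^{(t)}(d_2)\le\min_{t\in\mathcal F(d_1)}\nu^{(t)}(d_2)\le\min_{t\in\mathcal F(d_1)}\nu^{(t)}(d_1)=\nu_c(d_1),
\]
which is assertion~\ref{item:nuc-wrt-d-1}.

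For the case $q_0\ge2$ the situation is simpler because $I_0$ is $d$-independent, so there is no change of regime in $d$: if $I_0=\emptyset$ then $\nu_c\equiv\infty$ and there is nothing to prove. If $I_0\neq\emptyset$ I would analyze the single expression $\nu_c(d)=1+\frac{4(\delta(q_0)-\delta_+(q_{\ell_0}))}{1-2d}$ on the two sides of the kink $d^*=\tfrac12(1-1/q_{\ell_0})$ of $\delta_+(q_{\ell_0})$, which lies in the domain since $q_{\ell_0}\ge q_0$. For $d<d^*$ one has $\delta_+(q_{\ell_0})=0$ and a short computation gives $\nu_c(d)=1-2q_0+\frac{2}{1-2d}$, which is increasing; for $d>d^*$ one has $\delta_+(q_{\ell_0})=\delta(q_{\ell_0})$, the identity $\delta(q_0)-\delta(q_{\ell_0})=(q_{\ell_0}-q_0)\tfrac{1-2d}{2}$ cancels the denominator, and $\nu_c(d)=1+2(q_{\ell_0}-q_0)$ is constant; the two expressions agree at $d^*$. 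This yields assertion~\ref{item:nuc-wrt-d-2}. The main obstacle throughout is the bookkeeping in the case $q_0=1$: one must confirm that the several branches of Definition~\ref{def:nuc} really assemble into the single growing minimum above, in particular that no candidate ever leaves the family and that the $d\le1/4$ branch glues to the $d>1/4$ branch; once this is established, the monotonicity is the soft statement that a minimum of non-increasing functions over a growing index set is non-increasing, and the positivity of each candidate is already guaranteed by Proposition~\ref{pro:crit:expo}.
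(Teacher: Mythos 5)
Your proof is correct and follows essentially the same route as the paper's: in both, $\nu_c(d)$ is rewritten as a minimum of explicit rational candidates over an index set that is made effectively monotone in $d$ (the paper via the convention $a/0=\infty$, which renders the index set $d$-independent; you via a growing family whose members blow up at their activation thresholds), after which each candidate is checked to be monotone on either side of the kink of $\delta_+$. The $q_0\geq 2$ case is treated identically in both arguments.
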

\begin{proof}
  We first consider the case $q_0\geq 2$. In this case, either $I_0=\emptyset$ and
  $\nu_c(d)=\infty$, or $I_0\neq\emptyset$ and $\nu_c$ is a continuous function
  taking values
$$
\nu_c(d) =
\begin{cases}
1+\frac{4(\delta(q_0)-\delta(q_{\ell_0}))}{1-2d} =  1+2(q_{\ell_0}-q_0)&\text{
  if $\delta(q_{\ell_0})>0$,}\\
1+\frac{4\delta(q_0)}{1-2d} = 1-2q_0+2/(1-2d)&\text{ otherwise.}
\end{cases}
$$
Hence we obtain~\ref{item:nuc-wrt-d-2}.

We now consider the case $q_0=1$. In this case, with the convention
$a/0=\infty$ for $a>0$ the following formula can be applied in all cases~:
$$
\nu_c(d)=\min\left(\frac{1-2\delta_+(q_1-1)}{\delta_+(2)},
\frac{2d+1/2-2\delta_+(q_{\ell_r})-\delta(r+1)}{\delta(r+1)}~:~r\in\Irset\right)
$$
This comes from the fact that if $d\leq1/4$, we have $\delta_+(2)=0$ and
$\Irset\subset\{0\}$ with equality if and only if $I_0\neq\emptyset$.
Let us denote
$$
\tilde\Irset=\left\{1+q_{\ell+1}-q_\ell~:~\ell\in\ellset\right\}\;,
$$
so that $\Irset=\tilde\Irset\cap\{r~:~\delta(r+1)>0\}$. Since
$2d+1/2-2\delta_+(q_{\ell_r})-\delta(r+1)=2(d-\delta_+(q_{\ell_r}))+(1/2-\delta(r+1))>0$
we get using the same convention as above that
$$
\nu_c(d)=\min\left(\frac{1-2\delta_+(q_1-1)}{\delta_+(2)},
\frac{2d+1/2-2\delta_+(q_{\ell_r})-\delta(r+1)}{\delta_+(r+1)}~:~r\in\tilde\Irset\right)\;,
$$
where now the set $\tilde\Irset$ does not depend on $d$.
To prove~\ref{item:nuc-wrt-d-1}, we thus only need to show the following two assertions (setting
$q=q_1-1$ and then $p=r+1$ and $q=q_{\ell_r}$).
\begin{enumerate}[label=(\alph*)]
\item\label{item:nuc-wrt-d-a} For any given positive integer $q$, $(1-2\delta_+(q))/\delta_+(2)$ is non-increasing as $d$ increases,
\item\label{item:nuc-wrt-d-c} For any given positive integers $p$ and $q$,
  $\mu(d):=(2d+1/2-2\delta_+(q)-\delta(p))/{\delta_+(p)}$ is non-increasing
  as $d$ increases.
\end{enumerate}
Assertion~\ref{item:nuc-wrt-d-a} follows from the fact that $\delta(q)$ is
increasing with $d$ for any given $q\geq1$.
Finally, we need to prove Assertion~\ref{item:nuc-wrt-d-c}. Take some
integers $p,q\geq1$ and denote $\mu(d)$ as in~\ref{item:nuc-wrt-d-c}. If
$\delta_+(p)=0$, which is equivalent to $d\leq1/2(1-1/p)$, $\mu(d)=\infty$. Now
$\mu(d)$ is continuous over $d>1/2(1-1/p)$ and takes value
$$
\mu(d)=\min\left(\frac{1/2+2d}{dp+(p-1)/2},\frac{1/2+(q-1)(1-2d)}{dp+(p-1)/2}\right)\;.
$$
Since the two arguments in the min are decreasing functions of $d$ over
$d>1/2(1-1/p)$, we conclude that~\ref{item:nuc-wrt-d-c} holds. The proof of
the lemma is achieved.
\end{proof}

\appendix
\section{Integral representations}\label{s:appendixA}
It is convenient to use an integral representation in the spectral
domain to represent the random processes (see for
example~\cite{major:1984,nualart:2006}). The stationary Gaussian
process $\{X_k,k\in\mathbb{Z}\}$ with spectral
density~(\ref{e:sdf}) can be written as
\begin{equation}\label{e:intrepX}
X_\ell=\int_{-\pi}^{\pi}\rme^{\rmi\lambda
  \ell}f^{1/2}(\lambda)\rmd\widehat{W}(\lambda)=\int_{-\pi}^{\pi}\frac{\rme^{\rmi\lambda
    \ell}f^{*1/2}(\lambda)}{|1-\rme^{-{\rmi}\lambda}|^d}\rmd\widehat{W}(\lambda),\quad\ell\in\mathbb{N}\;.
\end{equation}
This is a special case of
\begin{equation}\label{e:int}
\widehat{I}(g)=\int_{\mathbb{R}}g(x)\rmd\widehat{W}(x),
\end{equation}
where $\widehat{W}(\cdot)$ is a complex--valued Gaussian random
measure satisfying, for any Borel sets $A$ and $B$ in
$\mathbb{R}$, $\mathbb{E}(\widehat{W}(A))=0$,
$\mathbb{E}(\widehat{W}(A)\overline{\widehat{W}(B)})=|A\cap B|$
and
$$
\widehat{W}(A)=\overline{\widehat{W}(-A)}\;.
$$
The integral~(\ref{e:int}) is defined for any function $g\in
L^2(\mathbb{R})$ and one has the isometry
\[
\mathbb{E}(|\widehat{I}(g)|^2)=\int_{\mathbb{R}}|g(x)|^2\rmd x\;.
\]
The integral $\widehat{I}(g)$, moreover, is real--valued if
\[
g(x)=\overline{g(-x)}\;.
\]

We shall also consider multiple It\^{o}--Wiener integrals
\[
\widehat{I}_q(g)=\int^{''}_{\mathbb{R}^q}g(\lambda_1,\cdots,\lambda_q)\rmd
\widehat{W}(\lambda_1)\cdots\rmd\widehat{W}(\lambda_q)
\]
where the double prime indicates that one does not integrate on
hyperdiagonals $\lambda_i=\pm \lambda_j,i\neq j$. The integrals
$\widehat{I}_q(g)$ are handy because we will be able to expand our
non--linear functions $G(X_k)$ introduced in Section~\ref{s:intro}
in multiple integrals of this type.

These multiples integrals are defined for
$g\in\overline{L^2}(\mathbb{R}^{q},\mathbb{C})$, the space of
complex valued functions defined on $\mathbb{R}^{q}$ satisfying
\begin{gather}\label{e:antisym}
g(-x_{1},\cdots,-x_{q})= \overline{g(x_{1},\cdots, x_{q})}\mbox{ for }(x_{1},\cdots, x_{q}) \in \mathbb{R}^q\;,\\
\label{e:fL2} \Vert g\Vert ^{2} _{L^{2}}:= \int_{\mathbb{R}^{q}}
\left| g(x_{1},\cdots, x_{q}) \right| ^{2} \rmd x_{1}\cdots \rmd
x_{q} <\infty\;.
\end{gather}
Hermite polynomials are related to multiple integrals as follows :
if $X=\int_{\mathbb{R}}g(x)\rmd\widehat{W}(x)$ with
$\mathbb{E}(X^2)=\int_{\mathbb{R}}|g(x)|^2\rmd x=1$ and
$g(x)=\overline{g(-x)}$ so that $X$ has unit variance and is
real--valued, then
\begin{equation}\label{e:herm-integ}
H_q(X)=\widehat{I}_q(g^{\otimes
q})=\int_{\mathbb{R}^q}^{''}g(x_1)\cdots g(x_q)\rmd
\widehat{W}(x_1)\cdots\rmd\widehat{W}(x_q)\;.
\end{equation}
\section{The wavelet filters}\label{s:appendixB}
The sequence $\{Y_{t}\}_{t\in\mathbb{Z}}$ can be formally
expressed as
$$
Y_{t}=\Delta^{-K}G(X_{t}), \quad t\in\mathbb{Z}\;.
$$
The study of the asymptotic behavior of the scalogram of $\{Y_{t}\}_{t\in\mathbb{Z}}$ at different scales involve multidimensional wavelets coefficients of
$\{G(X_t)\}_{t\in\mathbb{Z}}$ and of $\{Y_t\}_{t\in\mathbb{Z}}$.
To obtain them, one applies a multidimensional linear filter
$\bh_j(\tau),\tau\in\mathbb{Z}=(h_{j,\ell}(\tau))$, at each scale index $j\geq 0$. We shall
characterize below the multidimensional filters $\bh_j(\tau)$ by their discrete
Fourier transform~:
\begin{equation}\label{e:dF}
\widehat{\bh}_j(\lambda)=\sum_{\tau\in\mathbb{Z}}\bh_j(\tau)
\rme^{-\rmi \lambda\tau },\,\lambda\in [-\pi,\pi]\;,\quad
\bh_j(\tau)=\frac{1}{2\pi}\int_{-\pi}^{\pi}\widehat{\bh}_j(\lambda)\rme^{\rmi\lambda\tau}\rmd\lambda,\tau\in\mathbb{Z}\;.
\end{equation}
The resulting wavelet coefficients $\bW_{j,k}$, where $j$ is the
scale index and $k$ the location are defined as
\begin{equation}\label{e:W1}
\bW_{j,k}=\sum_{t\in\mathbb{Z}}\bh_j(\gamma_j
k-t)Y_{t}=\sum_{t\in\mathbb{Z}}\bh_j(\gamma_j
k-t)\Delta^{-K}G(X_{t}),\,j\geq 0, k\in\mathbb{Z},
\end{equation}
where $\gamma_j\uparrow \infty$ as $j\uparrow \infty$ is a
sequence of non--negative scale factors applied at scale index $j$, for
example $\gamma_j=2^j$.  We do not assume that the wavelet
coefficients are orthogonal nor that they are generated by a
multiresolution analysis. Our assumption on the filters
$\bh_j=(h_{j,\ell})$ are as follows~:
\begin{enumerate}[label=(W-\arabic*)]
\item\label{ass:w-ap} \underline{Finite support}: For each $\ell$ and $j$,
  $\{h_{j,\ell}(\tau)\}_{\tau\in\mathbb{Z}}$ has finite support. Further there exists some $A>0$ such that for any $j$ and any $\ell$ one has
  \begin{equation}\label{e:size-support}
  \mathrm{supp}(h_{j,\ell})\subset \gamma_j [-A,A]\;.
  \end{equation}
\item\label{ass:w-bp} \underline{Uniform smoothness}: There exists
$M\geq K$, $\alpha>1$
  and $C>0$ such that for all $j\geq0$ and $\lambda\in [-\pi,\pi]$,
    \begin{equation}\label{e:majoHj}
    |\widehat{\bh}_j(\lambda)|\leq \frac{C\gamma_j^{1/2}|\gamma_j\lambda|^M}{(1+\gamma_j|\lambda|)^{\alpha +M}}\;.
    \end{equation}
By $2\pi$-periodicity of $\widehat{h}_j$ this inequality can be
extended to $\lambda\in\mathbb{R}$ as
\begin{equation}\label{EqMajoHjR}
|\widehat{\bh}_{j}(\lambda)|\leq C
\frac{\gamma_j^{1/2}|\gamma_j\{\lambda\}|^M}
{(1+\gamma_j|\{\lambda\}|)^{\alpha+M}}\;.
\end{equation}
where $\{\lambda\}$ denotes the element of $(-\pi,\pi]$ such that
$\lambda-\{\lambda\}\in2\pi\mathbb{Z}$.
\item\label{ass:w-c}
\underline{Asymptotic behavior}: There exists a sequence of phase functions $\Phi_j :\mathbb{R}\rightarrow (-\pi,\pi]$ and some non identically
  zero function $ \widehat{\bh}_{\infty}$ such that
\begin{equation}\label{EqLimHj}
\lim_{j \to
+\infty}(\gamma_j^{-1/2}\widehat{\bh}_{j}(\gamma_j^{-1}\lambda))=
\widehat{\bh}_{\infty}(\lambda)\;,
\end{equation}
locally uniformly on $\lambda\in\mathbb{R}$.
\end{enumerate}
In~\ref{ass:w-c} {\it locally uniformly} means that for all compact
$K\subset \mathbb{R}$,
\[
\sup_{\lambda\in K}\left|\gamma_j^{-1/2}\widehat{\bh}_j(\gamma_j^{-1}\lambda)\rme^{\rmi \Phi_j(\lambda)}-\widehat{\bh}_{\infty}(\lambda)\right|\to 0\;.
\]
It implies in particular that $\widehat{\bh}_{\infty}$ is continuous over
$\mathbb{R}$.

A more convenient way to express the wavelet coefficients
$\bW_{j,k}$ than in~(\ref{e:W1}) is to incorporate the linear
filter $\Delta^{-K}$ into the filter $\bh_j$ and
denote the resulting filter $\bh_j^{(K)}$. Then
\begin{equation}\label{e:W2}
\bW_{j,k}=\sum_{t\in\mathbb{Z}}\bh_j^{(K)}(\gamma_j k-t)G(X_{t})\;,
\end{equation}
where
\begin{equation}\label{e:HjkVSHj}
\widehat{\bh}_j^{(K)}(\lambda)=(1-\rme^{-{\rmi}\lambda})^{-K}
\widehat{\bh}_j(\lambda)
\end{equation}
is the discrete Fourier transform of $\bh_j^{(K)}$, see
\cite{clausel-roueff-taqqu-tudor-2013} for more details.
\section{The multiscale wavelet inference setting}\label{s:appendixC}
We state here two theorems that are used in Section~\ref{sec:appl-wavel-estim}
%Theorem~\ref{th:main:paper4} applies to multivariate filters $\bh_j$
%which define the multivariate scalogram $\bS_{n,j}$. It is used in Section~\ref{sec:appl-wavel-estim}
to derive statistical properties of the estimator of the memory parameter $d_0$. This parameter is obtained from univariate multiscale wavelet filters $g_j$. Since, Theorem~\ref{th:main:paper4} applies to multivariate filters $\bh_j$ which define the multivariate scalogram $\bS_{n,j}$, we explain in this appendix the connection between these two perspectives.

We first give some details about the definition of the estimator of the memory parameter. We use dyadic scales
here, as in the standard wavelet analysis described in
\cite{moulines:roueff:taqqu:2007:jtsa}, where the univariate wavelet coefficients are
defined as
\begin{equation}
  \label{eq:wav_coeff_def1_dyad}
 W_{j,k}=\sum_{t\in\mathbb{Z}}g_j(2^j k-t)Y_{t}\;,
\end{equation}
which corresponds to~(\ref{e:WC}) with $\gamma_j=2^j$ and with
$(g_j)$ denoting a sequence of filters that
satisfies~\ref{ass:w-ap}--\ref{ass:w-c} with $m=1$.
In the case of a multiresolution analysis, $g_j$ can be
deduced from the associated mirror filters.

The number $n_j$ of wavelet coefficients available at
scale $j$, is related both to the number $N$ of observations $Y_1,\cdots,Y_N$
of the time series $Y$ and to the length $T$ of the support of the wavelet
$\psi$. More precisely, one has
\begin{equation}
  \label{eq:nj_def}
  n_j=[2^{-j}(N-T+1)-T+1]= 2^{-j}N+0(1)\;,
\end{equation}
where $[x]$ denotes the integer part of $x$ for any real $x$.
Details about the above facts can be found in
\cite{moulines:roueff:taqqu:2007:jtsa,roueff-taqqu-2009b}.

The univariate scalogram is an empirical measure of the distribution of
``energy of the
signal'' along scales, based on the $N$ observations $Y_1,\cdots,Y_N$. It is
defined as
\begin{equation}
  \label{eq:scalo_def}
\widehat{\sigma}^2_{j}=\frac1{n_j}\sum_{k=0}^{n_j-1}W_{j,k}^2, \quad j\geq0 \;,
\end{equation}
and is identical to $S_{n_j,j}$ defined in~(\ref{e:defsnj}). The wavelet spectrum is defined as
\begin{equation}
  \label{eq:wav-sp_def}
{\sigma}^2_{j}=\mathbb{E}[\widehat{\sigma}^2_{j}]=\mathbb{E}[W_{j,k}^2]\quad\text{for
  all $k$}\;,
\end{equation}
where the last equality holds for $M\geq K$ since in this case
$\{W_{j,k},k\in\mathbb{Z}\}$ is weakly stationary.

To define our wavelet estimator of the memory parameter $d_0$, we are given some positive weights $w_0,\cdots,w_p$ such that
\[
\sum_{i=0}^p w_i=0\mbox{ and }\sum_{i=0}^p i w_i=\frac{1}{2\log(2)}\;.
\]
We then set
\begin{equation}
\label{e:d0}\hat d_0=\sum_{i=0}^p w_i\log(\widehat{\sigma}_{j+i})\;.
\end{equation}

To derive statistical properties of this estimator, we apply
Theorem~\ref{th:main:paper4} using a sequence of multivariate filters
$(\bh_j)_{j\geq0}$ related to the family of univariate filters $g_j$ in a way indicated below.

We first give an example and consider the case $p=1$. To investigate the asymptotic properties of $\hat d_0$, we then have to study the {\it joint behavior} of $W_{j-u,k}$ for $u=0,1$. Recall that $j-1$ is a finer scale than $j$. Following the framework of
\cite{roueff-taqqu-2009b}, we consider the multivariate coefficients
$\bW_{j,k}=(W_{j,k},\,W_{j-1,2k},\,W_{j-1,2k+1})$, since, in addition to the
wavelet coefficients $W_{j,k}$ at scale $j$, there are twice as many wavelet
coefficients at scale $j-1$, the additional coefficients being $W_{j-1,2k},\,W_{j-1,2k+1}$.  These coefficients
can be viewed in this case as the output of a three-dimensional filter $\bh_j$ defined as
$\bh_j(\tau)=(g_j(\tau),\,g_{j-1}(\tau),\,g_{j-1}(\tau+2^{j-1}))$.
These three entries correspond to $(u,v)$ below equal to $(0,0)$, $(1,0)$ and
$(1,1)$, respectively, in the general case below.

In the general case, each $\bh_j$ is defined as follows.
For all, $j\geq0$, $u\in\{0,\dots,j\}$
and $v\in\{0,\dots,2^u-1\}$, let $\ell=2^u+v$ and define a filter $h_{\ell,j}$
by
\begin{equation}
  \label{eq:multiscale-filters}
  h_{\ell,j}(t)=g_{j-u}(t+2^{j-u} v),\quad t\in\mathbb{Z}\;.
\end{equation}
Applying this definition and~(\ref{eq:wav_coeff_def1_dyad}) with $\gamma_j=2^j$, we
get
$$
W_{j-u,2^uk+v}=\sum_{t\in\mathbb{Z}}h_{\ell,j}(2^j k-t)Y_t\;.
$$
These coefficients are stored in a vector $\bW_{j,k}=[W_{\ell,j,k}]_\ell$, say
of length $m=2^p-1$,
\begin{equation}
  \label{eq:Wljk-multiscale}
W_{\ell,j,k}=W_{j-u,2^uk+v}, \quad\ell=2^u+v=1,2,\dots,m\,,
\end{equation}
which corresponds to the multivariate wavelet coefficient~(\ref{e:Wjkbold}) with
$\bh_{j}(t)$ having components $h_{\ell,j}(t)$, $\ell=1,2,\dots,m$ defined
by~(\ref{eq:multiscale-filters}). This way of proceeding allows us to express
the vector $[\widehat{\sigma}^2_{j-u}-{\sigma}^2_{j-u}]_{u=0,\dots,p-1}$ as a
linear function of the vector
$\overline{\bS}_{n_j,j}$
defined by~(\ref{e:snjm}), up to a negligible term. We can then deduce, as in Section~\ref{sec:appl-wavel-estim}, the asymptotic behavior of $\hat d_0$ of the multivariate scalogram $\overline{\bS}_{n_j,j}$ using~(\ref{e:d0}).

We now indicate the asymptotic behavior of the univariate multiscale scalogram in the case $G=H_{q_0}$ since it will be needed in Section~\ref{sec:appl-wavel-estim}. We state the results separately for $q_0=1$ and for $q_0\geq2$.

We first consider the case $q_0=1$~:
\begin{theorem}\label{thm:multiscale:1}
Suppose $G=H_{q_0}$ with $q_0=1$ and that Assumptions~\textbf{A}(i),(ii) in Section~\ref{s:LRD} hold. Set
  $\gamma_j=2^j$ and let
  $\{(g_j)_{j\geq0},\,g_\infty\}$ be a sequence of univariate filters
  satisfying~\ref{ass:w-ap}--\ref{ass:w-c} with $m=1$ and $M\geq
  d+K$. Then, as $j\to\infty$,
  \begin{equation}
    \label{eq:wav-sp-asymp:1}
    {\sigma}^2_{j} \sim  f^*(0) \,
L_{1}(\widehat{g}_{\infty})\; 2^{2 j(d+K)}\;,
  \end{equation}
 where $L_1$ has been defined in~(\ref{e:defKp}). Let now $j=j(N)$ be an increasing sequence such that $j\to\infty$ and
  $N2^{-j}\to\infty$. Define $n_j$, $\widehat{\sigma}^2_{j}$ and
  ${\sigma}^2_{j}$ as in~(\ref{eq:nj_def}),~(\ref{eq:scalo_def})
  and~(\ref{eq:wav-sp_def}), respectively.  Then, as $N\to\infty$,
\begin{equation}
    \label{eq:scalo-multiscale-asymp:1}
   \left\{ n_j^{1/2}
     \left(\frac{\widehat{\sigma}^2_{j-u}}{{\sigma}^2_{j-u}}-1\right)\right\}_{u\geq0}
\overset{\mathrm{fidi}}{\longrightarrow}
\left\{
Q^{(d)}_u\right\}_{u\geq0} \;,
  \end{equation}
  where $Q^{(d)}$ denotes a centered Gaussian process with covariance function
  \begin{equation}
    \label{eq:cov:Q}
  \mathrm{Cov}(Q^{(d)}_u,Q^{(d)}_{u'})=\frac{4\pi \,2^{2(d+K)|u'-u|-\max(u,u')}}{L_{1}(\widehat{g}_{\infty})^2} \int_{-\pi}^{\pi} |D_{\infty,u-u'}(\lambda)|^2\rmd \lambda\;,
  \end{equation}
  with for all $m\in\mathbb{Z}$ and $\lambda \in (-\pi,\pi)$,
  \[
  D_{\infty,m}(\lambda)=\sum_{\ell\in\mathbb{Z}}|\lambda+2\pi\ell|^{-2(d+K)} \mathbf{e}_{m}(\lambda+2\pi\ell)\overline{\widehat{g}_\infty(\lambda+2\pi \ell)}
  \widehat{g}_\infty(2^{-m}(\lambda+2\pi \ell))\;,
  \]
  and
  \[
  \mathbf{e}_{m}(\xi)=2^{-m/2}[\rme^{-i2^{-m}v\xi},\,v=0,\cdots,2^m-1]^T\;.
  \]
\end{theorem}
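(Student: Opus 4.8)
The plan is to exploit that when $G=H_{q_0}$ with $q_0=1$ we have $G=H_1$, so $Y=\Delta^{-K}X$ is itself Gaussian and each wavelet coefficient $W_{j,k}=\sum_{t}g_j^{(K)}(2^jk-t)X_t$ is a centered Gaussian variable. The whole statement therefore reduces to the Gaussian theory of the scalogram, and I would obtain it by combining a direct spectral computation of the wavelet spectrum $\sigma_j^2$ with the central limit theorem for quadratic forms of long-memory Gaussian sequences, namely Theorem~2 of~\cite{roueff-taqqu-2009b}.

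First I would prove~(\ref{eq:wav-sp-asymp:1}). Since $W_{j,k}$ is linear in $X$, the isometry gives $\sigma_j^2=\int_{-\pi}^\pi|\widehat{g}_j^{(K)}(\lambda)|^2 f(\lambda)\,\rmd\lambda$, which by the univariate analog of~(\ref{e:HjkVSHj}) and~(\ref{e:sdf}) equals $\int_{-\pi}^\pi|1-\rme^{-\rmi\lambda}|^{-2(K+d)}|\widehat{g}_j(\lambda)|^2 f^*(\lambda)\,\rmd\lambda$. Substituting $\lambda=2^{-j}\xi$ and using $|\widehat{g}_j(2^{-j}\xi)|^2=2^j|\gamma_j^{-1/2}\widehat{g}_j(\gamma_j^{-1}\xi)|^2$ (so that the $2^j$ cancels the Jacobian $2^{-j}$), I obtain
\[
2^{-2j(K+d)}\sigma_j^2=\int_{-2^j\pi}^{2^j\pi}\bigl(2^{j}|1-\rme^{-\rmi 2^{-j}\xi}|\bigr)^{-2(K+d)}\,|\gamma_j^{-1/2}\widehat{g}_j(\gamma_j^{-1}\xi)|^2\,f^*(2^{-j}\xi)\,\rmd\xi\;.
\]
As $j\to\infty$, the first factor tends to $|\xi|^{-2(K+d)}$, the rescaled filter tends to $\widehat{g}_\infty(\xi)$ by Assumption~\ref{ass:w-c}, and $f^*(2^{-j}\xi)\to f^*(0)$ by continuity; dominated convergence, justified by the uniform bound~(\ref{EqMajoHjR}) together with $M\geq K+d$, which makes $|\xi|^{-2(K+d)}|\widehat{g}_\infty(\xi)|^2$ integrable near $0$ (where $\widehat{g}_\infty$ vanishes at order $M$) and at infinity (by the decay in~(\ref{EqMajoHjR})), then yields the limit $f^*(0)\int_{\mathbb{R}}|\xi|^{-2(K+d)}|\widehat{g}_\infty(\xi)|^2\,\rmd\xi=f^*(0)L_1(\widehat{g}_\infty)$, which is~(\ref{eq:wav-sp-asymp:1}) by~(\ref{e:defKp}) with $p=1$.

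For~(\ref{eq:scalo-multiscale-asymp:1}) I would assemble the decimated coefficients $\{W_{j-u,2^uk+v}\}$ into the multivariate vector $\bW_{j,k}$ via the filters~(\ref{eq:multiscale-filters}) and apply the Gaussian central limit theorem of~\cite{roueff-taqqu-2009b} (equivalently, the $q_0=1$, $c_1=1$ specialization covered by Theorem~\ref{th:main:paper2}\ref{item:th-case-gauss-paper2}); the hypothesis $N2^{-j}\to\infty$ forces $n_{j-u}\to\infty$ for every fixed $u$, so the CLT holds jointly at all scales. The limiting covariance is then computed by the Wick--Isserlis formula: since the coefficients are Gaussian, $\mathrm{Cov}(W_{j-u,k}^2,W_{j-u',k'}^2)=2\,\mathrm{Cov}(W_{j-u,k},W_{j-u',k'})^2$, so the variance of the normalized scalogram is a convergent sum over lags of squared wavelet cross-covariances. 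Passing to the spectral domain, each cross-covariance is an integral of $|1-\rme^{-\rmi\lambda}|^{-2(K+d)}f^*(\lambda)\,\widehat{g}_{j-u}(\lambda)\overline{\widehat{g}_{j-u'}(\lambda)}$ against a suitable exponential; rescaling as above, folding the integral onto $(-\pi,\pi)$ through the aliasing sum $\sum_{\ell}|\lambda+2\pi\ell|^{-2(d+K)}$, and encoding the relative phases of the decimated coefficients into the vector $\mathbf{e}_{m}(\xi)$ produces exactly the kernel $D_{\infty,u-u'}$ and the normalization $2^{-\max(u,u')}$ of~(\ref{eq:cov:Q}).

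I expect the main obstacle to be the explicit cross-scale covariance~(\ref{eq:cov:Q}), not the CLT itself, which is cited. The delicate bookkeeping is twofold: one must reconcile the single reference normalization $n_j^{1/2}$ with the actual number $n_{j-u}\asymp 2^u n_j$ of coefficients at scale $j-u$ (this mismatch is the source of the $2^{-\max(u,u')}$ factor), and one must correctly translate the phases of the decimated coefficients $W_{j-u,2^uk+v}$ into the vector $\mathbf{e}_{m}(\xi)=2^{-m/2}[\rme^{-\rmi 2^{-m}v\xi}]_v$ while simultaneously folding the spectral integral onto $(-\pi,\pi)$. Verifying that the resulting cross-spectral kernel is precisely $D_{\infty,m}$, and that the dominated-convergence arguments survive the aliasing sum (again controlled by~(\ref{EqMajoHjR}) to make $\sum_{\ell}$ convergent), is where the care is required.
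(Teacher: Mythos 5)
Your proposal is correct and follows essentially the same route as the paper: the wavelet-spectrum asymptotics~(\ref{eq:wav-sp-asymp:1}) via the standard spectral rescaling and dominated convergence (which the paper obtains by citing the proof of Theorem~4.1 of \cite{clausel-roueff-taqqu-tudor-2011b}), and the fidi convergence~(\ref{eq:scalo-multiscale-asymp:1}) by packaging the decimated coefficients into the multivariate filters~(\ref{eq:multiscale-filters}), invoking the Gaussian CLT of \cite{roueff-taqqu-2009b} (the $q_0=1$ case of Theorem~\ref{th:main:paper2}), and then carrying out the same aliasing/phase bookkeeping — including the $n_j/n_{j-u}\sim 2^{-u}$ normalization that produces the $2^{-\max(u,u')}$ factor — to identify the kernel $D_{\infty,m}$ in~(\ref{eq:cov:Q}). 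The only parts you leave as a plan rather than a verification (the explicit folding computation yielding $D_{\infty,m}$) are exactly the computations the paper writes out in detail, and your identification of where the care is needed is accurate.
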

\begin{proof}
We first observe that the proof of formula~(4.5) in Theorem~4.1 of~\cite{clausel-roueff-taqqu-tudor-2011b} remains valid in the case $q_0=1$. This yields~(\ref{eq:wav-sp-asymp:1}).

We now prove the convergence~(\ref{eq:scalo-multiscale-asymp:1}). To do so we adapt the corresponding proof of Theorem~4.1 of \cite{clausel-roueff-taqqu-tudor-2011b} done for $q_0\geq 2$. From~\cite{clausel-roueff-taqqu-tudor-2011b} (see equality~(9.5)), we have
\[
\widehat{\sigma}^2_{j-u}-{\sigma}^2_{j-u}=\frac{n_j}{n_{j-u}}\sum_{v=0}^{2^u-1}
\overline{S}_{n_j,j}(2^u+v)+O_P(\sigma_{j-u}^2/n_{j-u}),
\quad u=0,\dots,p-1\;,
\]
where we denoted the entries of the multivariate scalogram
$\overline{\bS}_{n_j,j}$ in~(\ref{e:snjm}) as
$[\overline{S}_{n_j,j}(\ell)]_{\ell=1,\dots,m}$. In addition, we also proved in Section~9 of~\cite{clausel-roueff-taqqu-tudor-2011b} that the multivariate filters $\bh_{j}(t)$ involved in the definition of the multivariate wavelet coefficients, defined
by~(\ref{eq:multiscale-filters}), satisfy the assumptions of Theorem~3.2 of~\cite{clausel-roueff-taqqu-tudor-2011b}. We can then apply Theorem~3.2 (a) of~\cite{clausel-roueff-taqqu-tudor-2011b} which provides the asymptotic behavior of the multivariate scalogram $\overline{S}_{n_j,j}$. Using the equality (9.6) of \cite{clausel-roueff-taqqu-tudor-2011b} relating $\widehat{h}_{\ell,\infty}$ and $\widehat{g}_\infty$ as,
\[
\widehat{h}_{\ell,\infty}(\lambda)=2^{-u/2}\widehat{g}_\infty(2^{-u}\lambda)\rme^{\rmi 2^{-u}v\lambda}\;,
\]
we then deduce that
as $j\to\infty$,
$$
\left\{n_j^{1/2} 2^{-2(j-u)(d+K)}
\overline{\bS}_{n_j,j}(2^u+v)\right\}_{u,v}
\overset{\mathrm{(\mathcal{L})}}{\longrightarrow}\mathcal{N}(0,\widetilde{\Gamma})\;,
$$
where (we denote $\lambda_p=\lambda+2p\pi$),
\begin{eqnarray*}
&&\widetilde{\Gamma}_{(u,v),(u',v')}=2^{2(u+u')(d+K)}\Gamma_{2^u+v,2^{u'}+v}\\
&=&4\pi (f^*(0))^2 \; 2^{2(u+u')(d+K-\frac{1}{2})}
\int_{-\pi}^\pi \left|
\sum_{p\in\mathbb{Z}}
|\lambda_p|^{-2(K+d)}
\widehat{g}_{\infty}(2^{-u}\lambda_p)\overline{\widehat{g}_{\infty}}(2^{-u'}\lambda_p)\rme^{\rmi(2^{-u}v-2^{-u'}v')\lambda_p}
\right|^2 \, \rmd\lambda\;,
\end{eqnarray*}
and $(u,v)$ (resp $(u',v')$) take values $u=0,\dots,p-1$ (resp $u'=0,\dots,p-1$) and $v=0,\dots,2^u-1$ (resp $v'=0,\dots,2^{u'}-1$). We showed in ~\cite{clausel-roueff-taqqu-tudor-2011b}, Relation~(9.4), that as $j\to\infty$, $n_j/n_{j-u}\sim 2^{-u}$. Using also~(\ref{eq:wav-sp-asymp:1}), which implies that $\sigma_{j-u}^2\sim f^*(0) \,
L_{1}(\widehat{g}_{\infty})\; 2^{2 (j-u)(d+K)}$ as $j\to\infty$, and following the proof of Theorem~4.1 of~\cite{clausel-roueff-taqqu-tudor-2011b}, we get
$$
\left\{n_j^{1/2}
\frac1{{\sigma}^2_{j-u}}
\frac{n_j}{n_{j-u}}\sum_{v=0}^{2^u-1}\overline{\bS}_{n_j,j}(2^u+v)\right\}_u\overset{\mathrm{(\mathcal{L})}}{\longrightarrow}\mathcal{N}(0,\overline{\Gamma})\;,
$$
with
\begin{align}\label{e:cov}
&\overline{\Gamma}_{u,u'}\\
&=\frac{2^{-u-u'}}{(f^*(0))^2L_{1}(\widehat{g}_{\infty})^2}\sum_{v=0}^{2^u-1}\sum_{v'=0}^{2^{u'}-1}\widetilde{\Gamma}_{(u,v),(u',v')}\nonumber\\
&=\frac{2^{2(u+u')(d+K-\frac{1}{2})}}{(f^*(0))^2L_{1}(\widehat{g}_{\infty})^2}\sum_{v=0}^{2^u-1}\sum_{v'=0}^{2^{u'}-1}\Gamma_{2^u+v,2^{u'}+v}\nonumber\\
&=\frac{4\pi 2^{2(u+u')(d+K-1)}}{L_{1}(\widehat{g}_{\infty})^2}
\int_{-\pi}^\pi \sum_{v=0}^{2^u-1}\sum_{v'=0}^{2^{u'}-1}
%\phantom{=\frac{4\pi 2^{2(u+u')(d+K-1)}}{L_{1}(\widehat{g}_{\infty})^2}}
\left|\sum_{p\in\mathbb{Z}}\frac{\widehat{g}_{\infty}(2^{-u}\lambda_p)
\overline{\widehat{g}_{\infty}(2^{-u'}\lambda_p)}\rme^{\rmi(2^{-u}v-2^{-u'}v')\lambda_p}}{|\lambda_p|^{2(K+d)}}\right|^2 \rmd\lambda\;,
\end{align}
and where $u,u'=0,\dots,p-1$. Thereafter, we follow the same lines that in the proof of~\cite[Theorem~2]{roueff-taqqu-2009b}. Assume for example that $u'\geq u$. We have to estimate
\[
\sum_{v=0}^{2^u-1}\sum_{v'=0}^{2^{u'}-1}\left|\sum_{p\in\mathbb{Z}}
|\lambda_p|^{-2(K+d)}
\widehat{g}_{\infty}(2^{-u}\lambda_p)\overline{\widehat{g}_{\infty}(2^{-u'}\lambda_p)}\rme^{\rmi(2^{-u}v-2^{-u'}v')\lambda_p}
\right|^2\;,
\]
which reads $\sum\limits_{v'=0}^{2^{u'}-1}G_{u,u',v'}(\lambda)$ with
\[
G_{u,u',v'}(\lambda)=\sum_{v=0}^{2^{u}-1}\left|\sum_{p\in\mathbb{Z}}
\rme^{\rmi(2^{-u}v-2^{-u'}v')\lambda_p}g_{u,u'}(2^{-u}\lambda_p)
\right|^2
\]
where $g_{u,u'}(\xi)=|2^{u}\xi|^{-2(K+d)}
\widehat{g}_{\infty}(\xi)\overline{\widehat{g}_{\infty}(2^{u-u'}\xi)}$.
We now observe that $G_{u,u',v'}$ is a $2\pi$--periodic function and write $p=2^u q+r$ with $r\in \{0,\cdots,2^u-1\}$. Hence ($\lambda_p=\lambda_r+2^u q\times 2\pi$ and $\rme^{2\rmi\pi v}$, if $v$ is integer),
\begin{eqnarray*}
G_{u,u',v'}(\lambda)&=&\sum_{v=0}^{2^{u}-1}\left|\sum_{r=0}^{2^{u}-1}\rme^{\rmi(2^{-u}v-2^{-u'}v')\lambda_r}\sum_{q\in\mathbb{Z}}
\rme^{-\rmi 2^{u-u'}v' 2\pi q}g_{u,u'}(2^{-u}\lambda_r+2\pi q)
\right|^2\\
&=&\sum_{v=0}^{2^{u}-1}\left|\sum_{r=0}^{2^{u}-1}\rme^{\rmi 2^{-u}v\lambda_r}h_{u,u',v'}(2^{-u}\lambda_r)
\right|^2\;,
\end{eqnarray*}
with
\[
h_{u,u',v'}(\xi)=\sum_{q\in\mathbb{Z}}
\rme^{-\rmi 2^{u-u'}v' (2\pi q+\xi)}g_{u,u'}(\xi+2\pi q)\;.
\]
Hence
\[
G_{u,u',v'}(\lambda)=\sum_{v=0}^{2^{u}-1}\sum_{r=0}^{2^{u}-1}\sum_{r'=0}^{2^{u}-1}\rme^{\rmi 2^{-u}v 2\pi(r-r')}h_{u,u',v'}(2^{-u}\lambda_r)\overline{h_{u,u',v'}(2^{-u}\lambda_{r'})}\;.
\]
Observe that if $r\neq r'$
\[
\sum_{v=0}^{2^{u}-1}\rme^{\rmi 2^{-u}v 2\pi(r-r')}=0\;,
\]
whereas in the case $r=r'$ this sum  equals $2^{u}$. Hence
\[
G_{u,u',v'}(\lambda)=2^u \,\sum_{r=0}^{2^{u}-1}|h_{u,u',v'}(2^{-u}\lambda_r)|^2\;.
\]
As in the proof of \cite[Theorem~2]{roueff-taqqu-2009b}, we apply Lemma~1 of~\cite{roueff-taqqu-2009} with $g=|h_{u,u',v'}|^2$, $\gamma=2^u$ and get
\[
\int_{-\pi}^\pi G_{u,u',v'}(\lambda)\rmd \lambda=2^u \int_{-\pi}^\pi\left(\sum_{r=0}^{2^{u}-1}|h_{u,u',v'}(2^{-u}\lambda_r)|^2\right)\rmd \lambda=2^{2u}\int_{-\pi}^\pi |h_{u,u',v'}(\lambda)|^2\rmd \lambda\;.
\]
We then deduce that
\[
\sum_{v'=0}^{2^{u'}-1}\int_{-\pi}^\pi G_{u,u',v'}(\lambda)\rmd \lambda=2^{2u}\int_{-\pi}^\pi \left(\sum_{v'=0}^{2^{u'}-1} |h_{u,u',v'}(\lambda)|^2\right)\rmd \lambda\;.
\]
Using~(\ref{e:cov}), the definition of $G_{u,u',v'}$ and the last display, we deduce that
\begin{eqnarray*}
&&\overline{\Gamma}_{u,u'}\\
&=&\frac{4\pi 2^{2(u+u')(d+K-1)}}{L_{1}(\widehat{g}_{\infty})^2}\left(2^{2u}\int_{-\pi}^\pi \sum_{v'=0}^{2^{u'}-1} |h_{u,u',v'}(\lambda)|^2\right)\\
&=&\frac{4\pi 2^{2(u+u')(d+K-1)}}{L_{1}(\widehat{g}_{\infty})^2}\times 2^{2u}2^{-4u(d+K)}\sum_{v'=0}^{2^{u'}-1}\int_{-\pi}^\pi \left| \sum_{q\in\mathbb{Z}}|\lambda_q|^{-2(d+K)}\rme^{-\rmi 2^{u-u'}v'\lambda_q}\widehat{g}_\infty(\lambda_q)\overline{\widehat{g}_\infty(2^{-(u-u')}\lambda_q)}\right|^2\rmd \lambda\\
&=&\frac{4\pi 2^{2(u'-u)(d+K)-2u'}}{L_{1}(\widehat{g}_{\infty})^2}\sum_{v'=0}^{2^{u'}-1}\int_{-\pi}^\pi \left| \sum_{q\in\mathbb{Z}}|\lambda_q|^{-2(d+K)}\rme^{-\rmi 2^{u-u'}v'\lambda_q}\widehat{g}_\infty(\lambda_q)\overline{\widehat{g}_\infty(2^{-(u-u')}\lambda_q)}\right|^2\rmd \lambda\;.
\end{eqnarray*}
For $v'\in\{0,\cdots,2^{u'}-1\}$, we write $v'=v+k2^{u'-u}$ with $v\in \{0,\cdots,2^{u'-u}-1\}$ and $k\in \{0,\cdots,2^u-1\}$ and transform the sum in $v'$ into a sum over $v$ and $k$. We obtain
\[
\overline{\Gamma}_{u,u'}=\frac{4\pi 2^{2(u'-u)(d+K)-2u'}}{L_{1}(\widehat{g}_{\infty})^2}\sum_{v=0}^{2^{u'-u}-1}\sum_{k=0}^{2^{u}-1}\int_{-\pi}^\pi \left| \sum_{q\in\mathbb{Z}}|\lambda_q|^{-2(d+K)}\rme^{-\rmi 2^{u-u'}(v+k2^{u'-u})\lambda_q}\widehat{g}_\infty(\lambda_q)\overline{\widehat{g}_\infty(2^{-(u-u')}\lambda_q)}\right|^2\rmd \lambda\;.
\]
Since $\rme^{-\rmi 2^{u-u'}v'\lambda_q}=\rme^{-\rmi 2^{u-u'}v\lambda_q}\rme^{-\rmi k\lambda}$ and $\sum_{k=0}^{2^{u}-1}|\rme^{-\rmi k\lambda}|^2=2^u$, one has
\[
\overline{\Gamma}_{u,u'}=\frac{4\pi 2^{2(u'-u)(d+K)-u'}2^{u-u'}}{L_{1}(\widehat{g}_{\infty})^2}\sum_{v=0}^{2^{u'-u}-1}\int_{-\pi}^\pi \left| \sum_{q\in\mathbb{Z}}|\lambda_q|^{-2(d+K)}\rme^{-\rmi 2^{u-u'}v\lambda_q}\widehat{g}_\infty(\lambda_q)\overline{\widehat{g}_\infty(2^{-(u-u')}\lambda_q)}\right|^2\rmd \lambda\;.
\]
Define now for any $m\in\mathbb{Z}$, the vector
\[
\mathbf{e}_m(\xi)=2^{-m/2}[\rme^{\rmi 2^{-m}v\xi},v=0,\cdots,2^m-1]^T\;.
\]
We then recover~(\ref{eq:cov:Q}) which concludes the proof.
\end{proof}
The case $q_0\geq2$ has been considered in~\cite[Theorem~4.1]{clausel-roueff-taqqu-tudor-2011b}. We recall it here.
\begin{theorem}\label{thm:multiscale:2}
  Suppose $G=H_{q_0}$, $q_0\geq 2$ and that Assumptions~\textbf{A}(i),(ii) hold with
  $q_0\geq2$. Set
  $\gamma_j=2^j$ and let
  $\{(g_j)_{j\geq0},\,g_\infty\}$ be a sequence of univariate filters
  satisfying~\ref{ass:w-ap}--\ref{ass:w-c} with $m=1$ and $M\geq
  \delta(q_0)+K$. Then, as $j\to\infty$,
  \begin{equation}
    \label{eq:wav-sp-asymp:2}
    {\sigma}^2_{j} \sim q_0!  \, (f^*(0))^{q_0} \,
L_{q_0}(\widehat{g}_{\infty})\; 2^{2 j (\delta(q_0)+K)}\;,
  \end{equation}
  where $L_p$ has been defined in~(\ref{e:defKp}) for any $p\geq 1$. Let now $j=j(N)$ be an increasing sequence such that $j\to\infty$ and
  $N2^{-j}\to\infty$. Define $n_j$, $\widehat{\sigma}^2_{j}$ and
  ${\sigma}^2_{j}$ as in~(\ref{eq:nj_def}),~(\ref{eq:scalo_def})
  and~(\ref{eq:wav-sp_def}), respectively.  Then, as $N\to\infty$,
\begin{equation}
    \label{eq:scalo-multiscale-asymp:2}
   \left\{ n_j^{1-2d}
     \left(\frac{\widehat{\sigma}^2_{j-u}}{{\sigma}^2_{j-u}}-1\right)\right\}_{u\geq0}
\overset{\mathrm{fidi}}{\longrightarrow}
\left\{2^{(2d-1)u}\,
\frac{L_{q_0-1}(\widehat{g}_{\infty})}{q_0!\,L_{q_0}(\widehat{g}_{\infty})}
\,Z_d(1)\right\}_{u\geq0} \;.
  \end{equation}
\end{theorem}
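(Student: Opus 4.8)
The plan is to establish Theorem~\ref{thm:multiscale:2} by paralleling the proof of Theorem~\ref{thm:multiscale:1}, replacing the Gaussian limit machinery by the Rosenblatt limit of Theorem~\ref{th:main:paper2}\ref{item:th-case-rozen-paper2}. Since $G=H_{q_0}$ we have $c_q=0$ for $q\neq q_0$ and $c_{q_0}=q_0!$, so in the Wiener chaos expansion~(\ref{e:decompSnjLD}) only the diagonal terms $\bS_{n_j,j}^{(q_0,q_0,p)}$, $0\leq p\leq q_0-1$, survive, the leading one being $\bS_{n_j,j}^{(q_0,q_0,q_0-1)}$, of order $2$. Thus the multivariate centered scalogram $\overline{\bS}_{n_j,j}$ is governed entirely by this second-chaos term and Theorem~\ref{th:main:paper2} applies directly to the multivariate filters $\bh_j$ built from $g_j$ through~(\ref{eq:multiscale-filters}), whose asymptotic Fourier transforms satisfy $\widehat{h}_{\ell,\infty}(\lambda)=2^{-u/2}\widehat{g}_\infty(2^{-u}\lambda)\rme^{\rmi 2^{-u}v\lambda}$ with $\ell=2^u+v$.

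First I would establish the wavelet spectrum equivalence~(\ref{eq:wav-sp-asymp:2}). Writing $\sigma_j^2=\mathbb{E}[W_{j,k}^2]$ in the spectral domain via the order-$q_0$ multiple integral representation~(\ref{e:herm-integ}) of $H_{q_0}(X_t)$ and the filter $\widehat{g}_j^{(K)}$, one expresses $\sigma_j^2$ as $q_0!$ times an integral of $|\widehat{g}_j^{(K)}|^2$ against the $q_0$-fold self-convolution of $f$. Rescaling frequencies by $\gamma_j=2^j$, using Assumption~\ref{ass:w-c} on $\widehat{g}_j$ together with the behaviour of $f^*$ at the origin from~(\ref{e:sdf0}), and recognizing the limiting integral as $L_{q_0}(\widehat{g}_\infty)$ from~(\ref{e:defKp}), yields~(\ref{eq:wav-sp-asymp:2}). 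This is precisely formula~(4.5) of Theorem~4.1 in~\cite{clausel-roueff-taqqu-tudor-2011b}.

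Next I would prove the fidi convergence~(\ref{eq:scalo-multiscale-asymp:2}). As in the proof of Theorem~\ref{thm:multiscale:1}, one uses the linearization relating $\widehat{\sigma}^2_{j-u}-\sigma^2_{j-u}$ to the entries of $\overline{\bS}_{n_j,j}$ up to a negligible $O_P(\sigma_{j-u}^2/n_{j-u})$ term, then applies Theorem~\ref{th:main:paper2}\ref{item:th-case-rozen-paper2} (equivalently Theorem~3.2(b) of~\cite{clausel-roueff-taqqu-tudor-2011b}) to the multivariate filters~(\ref{eq:multiscale-filters}). The crucial simplification relative to the case $q_0=1$ is that the Rosenblatt limit is \emph{degenerate}: by~(\ref{e:q02}), every component of $n_j^{1-2d}\gamma_j^{-2(\delta(q_0)+K)}\overline{\bS}_{n_j,j}$ converges to a deterministic multiple of the \emph{same} random variable $Z_d(1)$, namely $\frac{c_{q_0}^2}{(q_0-1)!}(f^*(0))^{q_0}\K_{q_0-1}Z_d(1)$ with $\K_{q_0-1}=[L_{q_0-1}(\widehat{h}_{\ell,\infty})]_\ell$. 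Hence the vector limit is fully determined by this deterministic vector and, unlike in the Gaussian case, no nontrivial covariance matrix needs to be computed.

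The main obstacle is then the bookkeeping of the scale exponents. One must combine the factor $n_j/n_{j-u}\sim 2^{-u}$, the equivalence $\sigma_{j-u}^2\sim q_0!(f^*(0))^{q_0}L_{q_0}(\widehat{g}_\infty)2^{2(j-u)(\delta(q_0)+K)}$, and the relation between $\widehat{h}_{\ell,\infty}$ and $\widehat{g}_\infty$ above to show that $\frac{n_j}{n_{j-u}}\sum_{v=0}^{2^u-1}\overline{S}_{n_j,j}(2^u+v)$, after division by $\sigma_{j-u}^2$, contributes exactly the prefactor $2^{(2d-1)u}L_{q_0-1}(\widehat{g}_\infty)/(q_0!\,L_{q_0}(\widehat{g}_\infty))$. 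The identity relating $L_{q_0-1}$ of the rescaled filter $2^{-u/2}\widehat{g}_\infty(2^{-u}\cdot)$ to $L_{q_0-1}(\widehat{g}_\infty)$, obtained by the change of variables $u_i\mapsto 2^u u_i$ in~(\ref{e:defKp}), together with the self-similarity of $Z_d$ with parameter $H=2d$, is what produces the clean $2^{(2d-1)u}$ dependence; checking that all the scale exponents cancel correctly is the delicate step. Since this computation is carried out in Theorem~4.1 of~\cite{clausel-roueff-taqqu-tudor-2011b}, the proof ultimately reduces to invoking that result.
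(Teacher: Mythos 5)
Your proposal is correct and matches the paper's treatment: the paper gives no proof of Theorem~\ref{thm:multiscale:2} at all, simply recalling it as Theorem~4.1 of \cite{clausel-roueff-taqqu-tudor-2011b}, which is exactly where your argument ultimately lands. Your intermediate sketch (only the $q=q'=q_0$ chaos terms survive, the limit is the degenerate Rosenblatt limit of Theorem~\ref{th:main:paper2}\ref{item:th-case-rozen-paper2} applied to the multiscale filters~(\ref{eq:multiscale-filters}), and the exponent bookkeeping via $n_j/n_{j-u}\sim 2^{-u}$ and the rescaling of $L_{q_0-1}$ yields $2^{(2d-1)u}$) is consistent with how the paper proves the analogous case $q_0=1$ in Theorem~\ref{thm:multiscale:1}.
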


\noindent\textbf{Acknowledgments}

Marianne Clausel's research was partially supported by the PEPS project
\emph{AGREE} and LabEx \emph{PERSYVAL-Lab} (ANR-11-LABX-0025-01) funded by the
French program Investissement d'avenir. François Roueff's research was
partially supported by the ANR project \emph{MATAIM} NT09 441552. Murad~S.Taqqu
was supported in part by the NSF grants DMS--1007616 and DMS-1309009 at Boston
University.

\bibliographystyle{apalike}
\bibliography{lrd}
\end{document}